\definecolor{dark-red}{rgb}{0.5,0.15,0.15}
\definecolor{dark-blue}{rgb}{0.15,0.15,0.6}
\definecolor{dark-green}{rgb}{0.15,0.6,0.15}
\newcommand{\Z}{\mathbb{Z}} 
\newcommand{\Q}{\mathbb{Q}} 
\newcommand{\F}{\mathbb{F}} 
\newcommand{\Fp}{\mathbb{F}_p} 
\newcommand{\Fpbar}{\bar{\mathbb{F}}_p} 
\newcommand \Hom {\mathop{\rm Hom}}
\newcommand{\Gal}[1]{G_{#1}} 
\newcommand{\rGal}[2]{\mathrm{Gal}(#1/#2)} 
\newcommand{\e}{\epsilon} 
\newcommand{\dsh}{\bar d^{\prime}} 
\newcommand \BA {\mathbb{A}}
\newcommand \PP {{\mathbb P}^1}
\newcommand \ZZ {{\mathbb Z}}
\newcommand \CC {{\mathbb C}}
\newcommand \QQ {{\mathbb Q}}
\newcommand \NN {{\mathbb N}}
\newcommand  \FF {{\mathbb F}}
\newcommand \Jac {\mathop {\rm Jac}}
\newcommand \Ker {\mathop {\rm Ker}}
\newcommand \Tr {\mathop {\rm Tr}}
\newcommand \GW {\mathop {\rm GW}}
\DeclareMathOperator{\dlog}{dlog}
\DeclareMathOperator{\Ext}{Ext}
\newtheorem{theorem}{Theorem}[section]
\newtheorem{lemma}[theorem]{Lemma} 
\newtheorem{corollary}[theorem]{Corollary}
\newtheorem{example}[theorem]{Example}
\newtheorem{proposition}[theorem]{Proposition} \theoremstyle{definition}
\newtheorem{remark}[theorem]{Remark} 
\newtheorem{definition}[theorem]{Definition}
\newtheorem{question}[theorem]{Question}
\newcommand{\Aa}{\Z/p} 
\title[Galois action and cohomology of Fermat curves]{The Galois action and cohomology of a relative homology group of Fermat curves}
\author{Rachel Davis}
\address{University of Wisconsin-Madison}
\email{rachel.davis@wisc.edu}
\author{Rachel Pries}
\address{Colorado State University}
\email{pries@math.colostate.edu}
\author{Vesna Stojanoska}
\address{University of Illinois at Urbana-Champaign}
\email{vesna@illinois.edu}
\author{Kirsten Wickelgren}
\address{Georgia Institute of Technology}
\email{kwickelgren3@math.gatech.edu}
\thanks{We would like to thank BIRS for hosting the WIN3 conference where we began this project
and AIM for support for this project through a Square collaboration grant. 
We would like to thank a referee for helpful comments.
Some of this work was done while the third and fourth authors were in
residence at MSRI during the spring 2014 Algebraic topology semester, supported by NSF grant 0932078 000.
The second author was supported by NSF grant DMS-15-02227. The third author was supported by NSF grants DMS-1606479 and DMS-1307390. The fourth author was supported by an American Institute of Mathematics five year fellowship and NSF grants DMS-1406380 and DMS-1552730. }
\begin{document}

\begin{abstract}
For an odd prime $p$ satisfying Vandiver's conjecture, we give explicit formulae for the 
action of the absolute Galois group $G_{\QQ(\zeta_p)}$ on the homology of the degree $p$ Fermat curve, building on work of Anderson.
Further, we study the invariants and the first Galois cohomology group which are associated with obstructions to rational points on the Fermat curve.\\
MSC2010: 11D41, 11R18, 11R34, 13A50, 14F35, 55S35, 55T10\\
Keywords: Fermat curve, cyclotomic field, homology, cohomology, \'etale fundamental group, Galois module, 
resolution, Hochschild-Serre spectral sequence.  
 \end{abstract}

\maketitle


\section{Introduction}

In this paper, we study the action of the absolute Galois group on the homology of the Fermat curve.
Let $p$ be an odd prime, 
let $\zeta$ be a chosen primitive $p$th root of unity, and consider the cyclotomic field $K={\mathbb Q}(\zeta)$.
Let $\Gal{K}$ be the absolute Galois group of $K$.
The Fermat curve of exponent $p$ is the smooth projective curve 
$X \subset {\mathbb P}_K^2$ of genus $g=(p-1)(p-2)/2$ given by the equation
\[x^p + y^p = z^p.\] 

Anderson \cite{Anderson} proved several foundational results about the Galois module structure of a certain relative homology 
group of the Fermat curve. These results are closely related to \cite{Ihara} \cite{Coleman89}, and were further developed in \cite{AI-prol} \cite{Anderson_hyper}.
Consider the affine open $U \subset X$ given by $z \not = 0$, which has equation $x^p+y^p=1$.
Consider the closed subscheme $Y \subset U$ defined by $xy=0$, which consists of $2p$ points. 
Let $H_1(U,Y; \Z/p)$ denote the \'etale homology group, 
with $\Z/p$ coefficients, of the pair $(U \otimes \overline{K},Y \otimes \overline{K})$;
it is a continuous module over $\Gal{{\mathbb Q}}$. There is a $\mu_p \times \mu_p$ action on $X$ given by 
\[(\zeta^i, \zeta^j) \cdot [x,y,z] = [\zeta^i x,\zeta^j y,z] , ~~~~~~(\zeta^i, \zeta^j) \in \mu_p \times \mu_p,\] 
which determines an action on $U$, preserving $Y$.
By \cite[Theorem 6]{Anderson}, the group $H_1(U,Y; \Z/p)$ is a free rank one $\ZZ/p[\mu_p \times \mu_p]$ module, with generator denoted $\beta$.    
The Galois action of $\sigma \in \Gal{K}$ is then determined by $\sigma \beta = B_{\sigma} \beta$, for some unit $B_\sigma \in  \ZZ/p[\mu_p \times \mu_p]$.
 
Let $L$ be the splitting field of $1-(1-x^p)^p$.
By \cite[Section 10.5]{Anderson}, the $\Gal{K}$ action on $H_1(U,Y; \Z/p)$ 
factors through $\rGal{L}{K}$.  
This implies that the full $\Gal{K}$ module structure of $H_1(U,Y; \Z/p)$ 
is determined by the finitely many elements $B_q$ for $q \in \rGal{L}{K}$.

From Anderson's work, the description of the elements $B_q$ is theoretically complete in the following sense:
 Anderson shows that $B_{q}$ is determined by an analogue of the classical gamma function 
$\Gamma_{q} \in \overline{\FF}_p[\mu_p] \simeq \overline{\FF}_p[\epsilon]/\langle \epsilon^p-1 \rangle$.
By \cite[Theorems 7 \& 9]{Anderson}, 
there is a formula $B_q = \dsh(\Gamma_{q})$ (with $\dsh$ as defined in \cref{sec:determineAction}).
The canonical derivation $d: \overline{\FF}_p[\mu_p] \to \Omega \overline{\FF}_p[\mu_p]$ 
to the module of K\"ahler differentials allows one to take the logarithmic derivative $\dlog \Gamma_{q}$ of $\Gamma_{q}$. 
Since $p$ is prime, $\dlog \Gamma_{q}$ determines $B_{q}$ uniquely \cite[10.5.2, 10.5.3]{Anderson}. The function $q \mapsto \dlog \Gamma_{q}$ is in turn determined by a relative homology group of the punctured affine line $H_1(\mathbb{A}^1 - V(\sum_{i=0}^{p-1} x^i), \{ 0, 1\}; \ZZ/p)$ \cite[Theorem 10]{Anderson}. 

In this paper, for any odd prime $p$ satisfying Vandiver's conjecture, 
we extend Anderson's work for the Fermat curve of exponent $p$ 
by finding a closed form formula for $B_q$ for all $q \in \rGal{L}{K}$.
This formula is valuable for calculating Galois cohomology of the Fermat curve and other applications.

We now describe the results of the paper in more detail.
We assume throughout that $p$ is an odd prime satisfying Vandiver's Conjecture, namely that $p$ does not divide the order $h^+$ of the class group of $\QQ(\zeta+\zeta^{-1})$; 
this is true for all $p$ less than 163 million and all regular primes. 
Under this condition, we proved in \cite{WINF:birs} 
that $\rGal{L}{K} \simeq (\ZZ/p)^{r+1}$ where $r = (p-1)/2$.
More precisely, let $\kappa$ denote the classical Kummer map; i.e., for $\theta \in K^*$, let 
$\kappa(\theta): \Gal{K} \to \mu_p$ be defined by
\[\kappa (\theta) (\sigma) = \frac{\sigma \sqrt[p]{\theta}}{ \sqrt[p]{\theta}}.\] 
Then the map
\[\kappa(\zeta) \times \prod_{i = 1}^{\frac{p-1}{2}}\kappa(1-\zeta
^{-i}) : \rGal{L}{K} \to (\mu_p)^{\frac{p+1} {2}}\] 
is an isomorphism \cite[Corollary 3.7]{WINF:birs}, 
We review this material and give additional information about the extension $\rGal{L}{\Q}$
in \Cref{sec:Galois}. 

In \cref{sec:determineAction}, we review \cite[Corollary 4.2]{WINF:birs}
which gives a formula for $\dlog\Gamma_q$ in terms of the above description of $\rGal{L}{K}$,
see \eqref{eq:dlogGamma} and \eqref{eq:cis}. 
Writing $\dlog\Gamma_q= \sum_{i=1}^{p-1} c_i \e^i \dlog \e$ for $c_i$ in $\Fp$, 
we note that each $c_i$ is linear in the coordinate projections of $q$ viewed as an element of $(\Fp)^{\frac{p+1}{2}} \cong (\mu_p)^{\frac{p+1}{2}}$, which is isomorphic
to $\rGal{L}{K}$ since a $p$th root of unity has been chosen.

In \cref{Sexplicit}, we use this formula to compute a closed form formula for $B_{q}$
in terms of the generators $\e_0$ and $\e_1$ for $\Lambda_1=\Aa[\mu_p \times \mu_p]$. 
As the first step, in \Cref{Gamma_from_c}, 
we determine $\Gamma_q \in \overline{\FF}_p[\mu_p]$ from 
$\dlog\Gamma_q$ using a truncated exponential map $E_0$ defined in \eqref{E0} 
and an auxiliary polynomial $\gamma$ defined in \eqref{Egamma}. 
As the second step, we determine $B_q$ from $\Gamma_q$, 
and re-express the result in terms of a second exponential map $E_1$ defined in \eqref{E1}.
Although $\gamma$ has coefficients in $\Fpbar$, the resulting $B_q$ is indeed in $\Lambda_1$.
This yields the first main result.

 \begin{theorem} \label{Tintro} (see \Cref{cor:Beta})
Suppose $p$ is an odd prime satisfying Vandiver's conjecture. 
Then the action of $\rGal{L}{K}$ on the relative homology 
\[H_1(U, Y; \Z/p) \cong \Lambda_1=\Aa[\mu_p \times \mu_p]\] of the Fermat curve is determined as follows. For $q \in \rGal{L}{K}\cong (\Fp)^{\frac{p+1}{2}}$, let the image of $q$ in $(\Fp)^{\frac{p+1}{2}}$ be \[q=(c_0, c_1, \ldots, c_{\frac{p-1}{2}}) \] and for $i >\frac{p-1}{2}$, let $c_i  = c_{p-i} - i c_0,$ and $c = \sum_{i=1}^{p-1} c_i$. Let $F \in \Fpbar$ be a solution to the equation \[F^p - F + \sum_{i=1}^{p-1} c = 0.\]
Let \[ \gamma(\e) = \sum_{i=1}^{p-1} \left(\frac{c_i + c -F}{i}\right)\e^i - \sum_{i=1}^{p-1} \frac{c_i}{i}.\]
Then $q$ acts by multiplication by the element $B_q \in \Lambda_1$ with the explicit formula
\[B_q =\frac{E_0(\gamma(\e_0)) E_0(\gamma(\e_1))}{E_0(\gamma(\e_0 \e_1))} =  \frac{E_1( \gamma(\e_0) + \gamma(\e_1))}{ E_0(\gamma(\e_0\e_1) )  },\] where $E_0$ and $E_1$ are the truncated exponential maps of \eqref{E0} and \eqref{E1}, respectively.
\end{theorem}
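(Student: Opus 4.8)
The plan is to assemble the explicit formula for $B_q$ by composing the two ``integration'' steps described in the introduction, namely passing from $\dlog\Gamma_q$ to $\Gamma_q$ and then from $\Gamma_q$ to $B_q = \dsh(\Gamma_q)$. I would start from the formula for $\dlog\Gamma_q = \sum_{i=1}^{p-1} c_i \e^i \dlog\e$ recalled in \cref{sec:determineAction} (equations \eqref{eq:dlogGamma} and \eqref{eq:cis}), where the $c_i$ for $i > (p-1)/2$ are already expressed via $c_i = c_{p-i} - i c_0$ using the relations coming from the identification of $\rGal{L}{K}$ with $(\Fp)^{(p+1)/2}$. The first task is to produce a $\Gamma_q \in \Fpbar[\mu_p]$ with the prescribed logarithmic derivative; this is precisely \Cref{Gamma_from_c}, whose output is $\Gamma_q = E_0(\gamma(\e))$ for the auxiliary polynomial $\gamma$ of \eqref{Egamma}. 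The key computation here is that $\dlog E_0(\gamma(\e)) = \gamma'(\e)\,\e\,\dlog\e$ modulo the truncation, which forces $\gamma'(\e) \e = \sum c_i \e^i$, i.e. the coefficient of $\e^i$ in $\gamma$ is $c_i/i$; the constant term and the auxiliary constant $F$ (and the normalization $-\sum c_i/i$) are pinned down by the requirement that $E_0(\gamma(\e))$ actually lie in the right subset — this is where the Artin–Schreier-type equation $F^p - F + \sum_{i=1}^{p-1} c = 0$ enters, governing the $p$-divisibility/normalization obstruction that makes the truncated exponential well-defined on the nose.

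Next I would carry out the second step: compute $B_q = \dsh(\Gamma_q)$ using the definition of $\dsh$ from \cref{sec:determineAction} together with Anderson's formula $B_q = \dsh(\Gamma_q)$ from \cite[Theorems 7 \& 9]{Anderson}. Concretely, $\dsh$ is (up to the conventions set up in \cref{sec:determineAction}) built from the two coface-type maps $\mu_p \times \mu_p \to \mu_p$, so that applying it to $\Gamma_q = E_0(\gamma(\e))$ produces the ratio $E_0(\gamma(\e_0)) E_0(\gamma(\e_1)) / E_0(\gamma(\e_0\e_1))$ in $\Fpbar[\mu_p \times \mu_p]$. The remaining work is to verify the two asserted identities: first that this ratio equals $E_1(\gamma(\e_0)+\gamma(\e_1)) / E_0(\gamma(\e_0\e_1))$ — this is the multiplicativity property $E_0(a)E_0(b) = E_1(a+b)$ of the truncated exponentials, which should follow directly from the defining formulas \eqref{E0} and \eqref{E1} once one tracks the truncation degrees; and second, crucially, that the resulting element, a priori only in $\Lambda_1 \otimes \Fpbar = \Fpbar[\mu_p\times\mu_p]$, actually lies in $\Lambda_1 = \Aa[\mu_p\times\mu_p]$. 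The latter is where the Galois-descent content of the theorem resides: $B_q$ must be $\Gal{\Fp}$-invariant even though $\gamma$ has coefficients in $\Fpbar$ via $F$, and the point is that the $F$-dependence cancels in the ratio (the three factors each contribute the same ``$F$-part,'' which is additive and kills itself in $E_0(\gamma(\e_0)) + E_0(\gamma(\e_1)) - E_0(\gamma(\e_0\e_1))$-type bookkeeping since $\e_0 + \e_1 - \e_0\e_1$ has the right structure), leaving coefficients in $\Fp$.

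The main obstacle I anticipate is precisely this integrality/rationality verification — showing $B_q \in \Lambda_1$ rather than merely in $\Lambda_1 \otimes \Fpbar$. The truncated exponential $E_0$ is only defined on a subset of $\Fpbar[\mu_p]$ (those elements whose ``total residue'' satisfies the Artin–Schreier condition), and one must check that $\gamma(\e_0)$, $\gamma(\e_1)$, and $\gamma(\e_0\e_1)$ all lie in the appropriate domains and that the exponential of the difference has coefficients fixed by Frobenius. I would handle this by a direct coefficient computation: expand $E_0(\gamma(\e_0\e_1))$ as a polynomial in $\e_0, \e_1$, compare with the product $E_0(\gamma(\e_0))E_0(\gamma(\e_1))$, and observe that the transcendental quantity $F$ occurs only through $F^p - F = -\sum c$ which lies in $\Fp$, so every symmetric function of the Frobenius conjugates of $F$ that survives is rational. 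The other, more routine, steps — the linearity of the $c_i$ in the coordinates of $q$, and the equality of the two displayed expressions for $B_q$ — reduce to the formal properties of $E_0, E_1$ established in \eqref{E0} and \eqref{E1} and to \Cref{Gamma_from_c}, and should not present real difficulty.
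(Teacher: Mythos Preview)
Your approach is essentially the same as the paper's: invoke \Cref{Gamma_from_c} to get $\Gamma_q = E_0(\gamma(\e))$, then apply Anderson's $B_q = \dsh(\Gamma_q) = \Gamma_q(\e_0)\Gamma_q(\e_1)/\Gamma_q(\e_0\e_1)$, and finally use the exponential identities (specifically that $E_0(\gamma_i) = E_1(\gamma_i)$ since $\gamma_i^p = 0$ in one variable, together with the multiplicativity of $E_1$ from \Cref{lem:expfun}) to rewrite the numerator.

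Two corrections worth noting. First, the ``main obstacle'' you anticipate---showing $B_q \in \Lambda_1$ rather than $\bar\Lambda_1$---is not an obstacle at all: $B_q$ is \emph{defined} as the element of $\Lambda_1$ describing the action of $q$ on the free rank-one $\Lambda_1$-module $H_1(U,Y;\Z/p)$, so it lies in $\Lambda_1$ a priori by Anderson's setup; the formula in terms of $\gamma$ is a computation of something already known to be $\Fp$-rational, and no separate Galois-descent argument is needed. Second, your explanation of where the Artin--Schreier equation for $F$ comes from is slightly off: it does not arise from a well-definedness constraint on $E_0$ (which is defined on all of $y\bar\Lambda_0$), but from the correction term $f_y(0)^{p-1} y^{p-1}\,df$ in $\dlog E_0(f)$ (\Cref{dlogEF_y}), which makes the equation $\dlog E_0(\gamma) = \Psi$ nonlinear in the coefficients and forces $F = \sum i f_i$ to satisfy $F^p - F + c = 0$. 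Since you cite \Cref{Gamma_from_c} anyway this does not affect the correctness of your outline, but your informal description ``$\gamma'(\e)\e = \sum c_i \e^i$, i.e.\ the coefficient of $\e^i$ in $\gamma$ is $c_i/i$'' misses exactly this correction and would give the wrong $\gamma$.
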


\Cref{Snorm} contains two applications of \Cref{Tintro}, which hold for any odd prime $p$
satisfying Vandiver's conjecture.
By \cite[9.6 and 10.5.2]{Anderson}, if $q \in \rGal{L}{K}$, then
$B_q - 1$ lies in the augmentation ideal $(1-\epsilon_0)(1-\epsilon_1) \Lambda_1$;
this is equivalent to the statement that $(B_q-1) \beta \in H_1(U; \Aa)$.
In \Cref{cor:BqinI3}, we provide a technical strengthening of \cite[9.6 and 10.5.2]{Anderson}.
The first application, \Cref{Tnorm}, is that the norm of $B_q$ is $0$ or, equivalently, 
that the norm of $q$ acts as zero on $H_1(U, Y; \Z/p)$, for almost all $q \in \rGal{L}{K}$;
the only exception is when $p=3$ and $q$ does not fix $\zeta_9 \in L$. 
This result is of significant importance in computing Galois cohomology, as seen in \Cref{SGalCohomology}.
For the second application, note that Anderson's result implies that $H_1(U; \Aa)$ is trivialized by 
the product $\prod_{i=1}^{p-1} (B_{q_i} - 1)$ for any $q_1, \ldots, q_{p-1} \in Q$;
The improvement in \Cref{cor:BqinI3} allows us to show in \Cref{application2}
that in fact $H_1(U; \Aa)$ is trivialized by the product of only $s = \lfloor 2p/3 \rfloor$ such terms.

Having explicitly determined the action of $Q=\rGal{L}{K}$, and therefore of $\Gal{K}$, on $M=H_1(U,Y;\Z/p)$, we proceed to studying the zeroth and first associated Galois cohomology groups. 
In \Cref{Sinvariant}, we study the $\Gal{K}$-invariants, which are just the $Q$-invariants 
since the action of $\Gal{K}$ factors through $Q$.
In \Cref{Ph1umq}, we prove that ${\rm codim}(H_1(U; \ZZ/p)^Q, M^Q)=2$ for all odd $p$ and find a uniform subspace of $M^Q$ in \Cref{lem:InvSubspace,lem:otherinvariants}; we use these results in future work.  

In \Cref{SGalCohomology}, we work towards determining the first Galois cohomology group. 
Initially, the material in this section might seem disjoint from the earlier sections.
However, these general results in commutative algebra will eventually play a key role in 
understanding obstructions for rational points on Fermat curves.

For the second main result, consider an extension of finite (or profinite) groups 
\begin{equation}\label{Egaloisexactintro} 
1\to N \to G \to Q \to 1.
\end{equation}
Suppose $M$ is a $\ZZ[G]$-module on which $N$ acts trivially. Note that this applies to $G=\Gal{K}$, $Q=\rGal{L}{K}$, and $M=H_1(U,Y; \Z/p)$.
Consider the differential in the spectral sequence associated with \eqref{Egaloisexactintro}
\[d_2: H^1(N, M)^{Q} \to H^2(Q, M).\] 
It gives a short exact sequence
\[  0 \to H^1(Q,M ) \to H^1(G,M) \to \Ker d_2 \to 0, \]
which reduces the calculation of $H^1(G,M)$ to the two simpler calculations of $H^1(Q,M)$ and $\Ker d_2$. We address the first of these calculations in \Cref{SH1Q}, while the rest of \Cref{SGalCohomology} concerns the second.

When $Q \simeq (\Z/p)^{r+1}$, we determine the kernel of $d_2$ algebraically.
To state the result about ${\Ker}(d_2)$, fix a set of generators $\tau_0, \ldots, \tau_r$ of $Q$.
Let $N_{\tau_j} = 1 + \tau_j + \cdots \tau_j^{p-1}$ denote the norm of $\tau_j$.
Let $s:Q\to G$ be a set-theoretic section of \eqref{Egaloisexactintro}.  
The element $\omega \in H^2(Q, N)$ classifying \eqref{Egaloisexactintro} is determined by elements $a_j, c_{j,k} \in N$ where
$a_j = s(\tau_j)^p$ for $0 \leq j \leq r$ and where, for $0 \leq j < k \leq r$,
\[c_{j,k}=[s(\tau_k), s(\tau_j)] = s(\tau_k)s(\tau_j)s(\tau_k)^{-1}s(\tau_j)^{-1}.\] 

Here is the second main result of this paper 

\begin{theorem} \label{Tintro} (see \Cref{thm:d2Explicit})
Suppose $\phi \in H^1(N,M)^Q$ is a class represented by a homomorphism $\phi:N \to M$.
Then $\phi$ is in the kernel of $d_2$ if and only if
there exist $m_0, \ldots, m_r \in M$ such that 
\begin{enumerate}
\item $\phi(a_j)= - N_{\tau_j} m_j$ for $0 \leq i \leq r$ and
\item $\phi(c_{j,k})=(1-\tau_k)m_j - (1-\tau_j)m_k$ for $0 \leq j < k \leq r$. 
\end{enumerate}
\end{theorem}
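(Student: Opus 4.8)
The plan is to work directly with the Hochschild–Serre spectral sequence for $1 \to N \to G \to Q \to 1$ with coefficients in $M$, using an explicit cochain model. Since $N$ acts trivially on $M$, we have $H^1(N,M) = \Hom(N,M)$, and the differential $d_2 \colon H^1(N,M)^Q \to H^2(Q,M)$ is the transgression. First I would recall (or set up) the standard formula for the transgression in terms of a set-theoretic section $s \colon Q \to G$: given $\phi \in \Hom(N,M)^Q$, lift $\phi$ to a $1$-cochain on $G$, and $d_2(\phi)$ is represented by the $2$-cocycle on $Q$ obtained by applying $\phi$ to the $N$-valued $2$-cocycle $\omega$ that classifies the extension, i.e. $d_2(\phi) = \phi_*(\omega) \in H^2(Q,M)$ (up to a sign convention I would fix at the outset). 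Concretely, $\omega(\sigma,\tau) = s(\sigma)s(\tau)s(\sigma\tau)^{-1} \in N$, so $\phi$ is in $\Ker d_2$ exactly when the $2$-cocycle $(\sigma,\tau) \mapsto \phi(\omega(\sigma,\tau))$ is a $2$-coboundary on $Q$ with values in $M$.

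The heart of the argument is then to translate "$\phi_*(\omega)$ is a coboundary" into the two stated conditions, using the fact that $Q \cong (\Z/p)^{r+1}$ is elementary abelian with chosen generators $\tau_0,\dots,\tau_r$. Here I would invoke the well-known presentation of $H^2$ of a finitely generated abelian group — or, more hands-on, use the fact that the class of a central extension $0 \to M \to E \to Q \to 1$ (which is what $\phi_*(\omega)$ represents, since $M$ has trivial $Q$-action in degree $2$... wait, $M$ need not have trivial $Q$-action; I'll instead use the bar resolution but reduce modulo coboundaries using the Koszul/minimal resolution of $\Z$ over $\Z[Q]$). The cleanest route: a $2$-cocycle $f \colon Q \times Q \to M$ is a coboundary iff the associated "commutator pairing" and "power map" data vanish appropriately — precisely, iff there exist $m_0,\dots,m_r \in M$ with $\sum_{i=0}^{p-1} \tau_j^i \cdot (\text{contribution of } f \text{ along } \tau_j) = -N_{\tau_j} m_j$ and the $\tau_j,\tau_k$-commutator contribution equals $(1-\tau_k)m_j - (1-\tau_j)m_k$. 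I would make this rigorous by choosing the standard free resolution of $\Z$ over $\Z[Q]$ adapted to the generators (tensor of the length-one-in-each-variable periodic resolutions for each $\Z/p$ factor), computing $H^2$ as the cohomology of $\Hom_{\Z[Q]}(P_\bullet, M)$ in degree $2$, and identifying the two types of degree-$2$ basis elements: the "$\tau_j^2$-type" giving condition (1) via the norm $N_{\tau_j}$, and the "$\tau_j\tau_k$-type" giving condition (2) via the commutator. Evaluating $\phi_*(\omega)$ on these resolution classes produces exactly $\phi(a_j)$ and $\phi(c_{j,k})$ because $a_j = s(\tau_j)^p$ and $c_{j,k} = [s(\tau_k),s(\tau_j)]$ are the images under $\omega$ (suitably assembled) of the corresponding generators of $\pi_1$-relations — this is the standard dictionary between a group $2$-cocycle and the relations $a_j$, $c_{j,k}$ of the extension.

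The main obstacle I anticipate is bookkeeping: matching the abstract transgression with the concrete elements $a_j$ and $c_{j,k}$ requires carefully comparing the bar resolution (in which $\omega$ naturally lives) with the smaller Koszul-type resolution (in which conditions (1) and (2) are read off), and tracking signs and the order of the generators through the comparison map. In particular, getting condition (2) exactly as $(1-\tau_k)m_j - (1-\tau_j)m_k$ rather than with the roles of $j$ and $k$ swapped, and getting the sign in $\phi(a_j) = -N_{\tau_j}m_j$, will depend on orientation choices that must be made consistently with the definition $c_{j,k} = s(\tau_k)s(\tau_j)s(\tau_k)^{-1}s(\tau_j)^{-1}$. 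I would handle this by first doing the rank-one case $Q = \Z/p$ (only condition (1) appears, via the periodicity isomorphism $H^2(\Z/p, M) \cong M^Q / N_\tau M$ wait — more precisely $\widehat H^0$), then the rank-two case $Q=(\Z/p)^2$ (where condition (2) first appears and the Künneth/Lyndon–Hochschild–Serre bootstrap pins down the commutator term), and finally assembling the general case by induction on $r$ using the filtration of $Q$ by the subgroups generated by initial segments of $\tau_0,\dots,\tau_r$. Throughout, the key input is that $\phi$ is $Q$-invariant, which is what makes $\phi_*$ well-defined on the relevant resolution terms and forces the norm operators $N_{\tau_j}$ and the operators $(1-\tau_j)$ to appear.
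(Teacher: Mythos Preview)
Your proposal is correct and follows essentially the same route as the paper: identify the transgression as $d_2(\phi) = -\phi\circ\omega$ via a set-theoretic section (the paper's \Cref{prop:kerd2v1}), then translate the coboundary condition from the bar resolution to the tensor of minimal $C_p$-resolutions, where the degree-$2$ generators split into ``norm type'' and ``commutator type''. The paper carries out the comparison you anticipate by writing down explicit $\Z[Q]$-chain maps $f_1,f_2\colon A_\bullet\to B_\bullet$ (Lemmas \ref{dA1B1} and \ref{dA2B2}) and then evaluating $\phi\circ\tilde\omega\circ f_2$ on the generators $1_j,1_{j,k}$ of $A_2$, using the elementary identities $a_j=\omega(\tau_j^{p-1},\tau_j)$ and $c_{j,k}=\omega(\tau_k,\tau_j)\omega(\tau_j,\tau_k)^{-1}$ (\Cref{Lomegaac}); this is exactly the ``bookkeeping'' you flagged, and it resolves all the sign and ordering issues in one pass rather than by induction on $r$.
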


This theorem is a consequence of the general result about $d_2$ given in \Cref{prop:kerd2v1}, combined with a direct comparison of cocycle representatives coming from two different resolutions which compute $H^*(Q,M)$. 

The last section of this paper, \Cref{finitefield}, is disjoint from the main results and is not new, but the methods use new topological tools, and are included for this reason. We recover results about the zeta function mod $p$ of the Fermat curve of exponent $p$ over a finite field of coprime characteristic.

Here is the motivation for studying the first Galois cohomology group of the relative homology 
$H_1(U,Y; \Z/p)$.
Let $X$ be a smooth, proper curve over a number field $k$ and let $\overline{b}$ be a geometric point of $X$. 
Let $\pi = \pi_1(X_{\overline{k}},\overline{b})$ 
denote the geometric \'etale fundamental group of $X$ based at $\overline{b}$, and let $$\pi = [\pi]_1 \supseteq [\pi]_2 \supseteq \ldots \supseteq  [\pi]_n \supseteq \ldots $$ denote the lower central series.
Let $G$ denote the Galois group of the maximal extension of $k$ ramified only over the primes of bad reduction for $X$, the infinite places, and a chosen prime $p$.
Using work of Schmidt and Wingberg \cite{SW92}, Ellenberg \cite{Ellenberg_2_nil_quot_pi} defines a series of obstructions to a point of the Jacobian of a curve $X$ lying in the image of the Abel-Jacobi map. Namely, $X(k)$ and $\Jac X (k)$ can be viewed as subsets of $H^1(G, \pi^{\rm ab}_p)$, where for a nilpotent profinite group, the $p$-subscript denotes the $p$-Sylow \cite[Chapter 7]{Stix}. The first of these obstructions
\[\delta_2:  H^1(G, \pi^{\rm ab}_p) \to H^2(G, ([\pi]_2/[\pi]_3)_p )\] was also studied by Zarkhin \cite{zarhin}; it  is the coboundary map associated to the $p$-part of the exact sequence 
\[ 0 \to [\pi]_2/[\pi]_3  \to \pi/[\pi]_3 \to \pi/[\pi]_2 \to 0,  \]
and has the property that $\Ker \delta_2 \supset X(k)$. Ellenberg's obstructions are related to the non-abelian Chabauty methods of \cite{Kim_Siegel} \cite{Kim_Siegel} \cite{DC-W1} \cite{B-DC-K-W}. The work of \cite{C-NGJ} gives interesting information related to the embedding $\Jac X(k) \subset H^1(G, \pi^{\rm ab}_p) $ for the Fermat curve.

To pursue this application in the case of Fermat curves, set $M=H_1(U, Y; \ZZ/p)$ and $Q=\rGal{L}{K}$.
In future work, we provide information about $N$
(the Galois group of the maximal extension of $L$ ramified only over the 
prime above $p$ and the infinite places) and the elements $a_j, c_{j,k} \in N$ which classify \eqref{Egaloisexactintro}.

In the mentioned future work, to apply \Cref{Tintro}, 
we need additional information about the elements $B_q \in \ZZ/p[\mu_p \times \mu_p]$ 
which we include in Sections \ref{Snorm} - \ref{Sinvariant}. 
Specifically, we need \Cref{Tnorm} 
which states that the norm $N_q$ of $B_q$ is zero for all $q \in Q$ and all $p \geq 5$;
\Cref{Ph1umq} which states that ${\rm codim}(H_1(U; \ZZ/p)^Q, M^Q)=2$;
and \Cref{Pkernel} which is about the kernels of $B_{\tau_j}-1$.

\section{Review and extension of previous results} \label{Ssurvey}

Throughout this paper, $p$ is an odd prime satisfying Vandiver's conjecture.

In our previous paper \cite{WINF:birs}, we extended results of Anderson \cite{Anderson} regarding the action of the absolute Galois group of a number field on the first homology of Fermat curves. 
In this section we briefly summarize and generalize these results.

The homology group associated to the Fermat curve of exponent $p$ in which one sees the Galois action most transparently is the relative homology group $H_1(U,Y; \Aa)$. 
The path $\beta: [0,1] \to U(\CC)$ given by $t \mapsto (\sqrt[p]{t},   \sqrt[p]{1-t})$, where $\sqrt[p]{-}$ denotes the real $p$th root, determines a singular $1$-simplex in $H_1(U,Y; \Aa)$ whose class we denote by the same name.
By \cite[Theorem 6]{Anderson}, $H_1(U,Y; \Aa)$ is a free rank one module with generator $\beta$ 
over the group ring 
\[\Lambda_1=\Aa[\mu_p\times \mu_p] = \Aa[\e_0,\e_1]/\langle \e_i^p-1\rangle .\] 
Note that $\Lambda_1$ itself has an action by $\Gal{\Q}$, where $g \in \Gal{\Q}$ acts on both $\e_0$ and $\e_1$ as it does on a primitive $p$-th root of unity $\zeta $ in $K=\Q(\zeta)$. 
The action of $ g\in \Gal{\Q}$ on $H_1(U,Y; \Aa)$ is twisted in the sense that
\[ g \cdot ( f(\e_0,\e_1) \beta ) =(g \cdot f(\e_0,\e_1)) (g\cdot \beta) = (g \cdot f(\e_0,\e_1)) B_g \beta. \]
In particular, if $g$ fixes $K$, it is easier to describe the action.

Further, by \cite[Section 10.5]{Anderson}, if a Galois element fixes the splitting field $L$ of $1-(1-x^p)^p$, then it acts trivially on $H_1(U,Y; \Z/p)$. Hence to determine the action of $\Gal{\Q}$, we are reduced to determining the action of the finite Galois group $\rGal{L}{\Q}$. To do this explicitly, we need to know the structure of these Galois groups; this is described in the first subsection.

The next subsection introduces the question of determining $B_q$, where $q$ is an element of the Galois group $Q := \rGal{L}{K}$.

\subsection{The Galois groups $\rGal{L}{K}$ and $\rGal{L}{\Q}$}\label{sec:Galois}

Let $r = \frac{p-1}{2}$; by \cite[Lemma 3.3]{WINF:birs}, the splitting field of $L$ of $1-(1-x^p)^p$ is
\[ L = K( \sqrt[p]{\zeta}, \sqrt[p]{1-\zeta^{-i}} | 1\leq i \leq r). \]
Let $\sigma \in \Gal{K}$; for an element $\theta$ of $K$, let $\sqrt[p]{\theta}$ be a choice of a primitive $p$-root. We define $\kappa(\theta)\sigma$ to be the element of $\Aa$ such that
\[  \sigma \cdot \sqrt[p]{\theta} = \zeta^{\kappa(\theta)\sigma}\sqrt[p]{\theta}. \]
Then $\kappa(\theta)$ defines a homomorphism $\Gal{K} \to \Aa$, which factors through $\rGal{K(\sqrt[p]{\theta})}{K}$.

From \cite[Corollary 3.7]{WINF:birs}, the map
\begin{align}\label{eq:GalLK}
C = \kappa(\zeta) \times \prod_{i=1}^r \kappa(1-\zeta^{-i}): \rGal{L}{K} \to (\Z/p)^{r+1}
\end{align}
is an isomorphism. This relationship has a geometric meaning explored further in \cite[Section 4]{WINF:birs}. We use $C$ to give a convenient basis of $Q=\rGal{L}{K}$.

\begin{definition}\label{def:Qbasis}
For $0 \leq i \leq r$, let $\tau_i$ be the inverse image under $C$ of the $i$th standard basis vector of $(\Z/p)^{r+1}$.
In other words, consider the basis for $L/K$ given by
$t_0=\sqrt[p]{\zeta}$ and $t_i=\sqrt[p]{1-\zeta^{-i}}$ for $1 \leq i \leq r$.  
Then $\tau_i$ acts by multiplication by $\zeta$ on $t_i$ and acts 
trivially on $t_j$ for $0 \leq j \leq r$, $j \neq i$.
\end{definition}

Now we turn to studying the Galois group $\rGal{L}{\Q}$; note that $L/\Q$ is itself Galois since $L$ is a splitting field. 
There is an extension
\[ 1 \to Q \to \rGal{L}{\Q} \to (\Z/p)^*  \to 1.\]
Since $ \rGal{K}{\Q} \cong (\Z/p)^*$ has order coprime to the order of $Q$, the Schur-Zassenhaus theorem implies 
that $\rGal{L}{\Q}$ splits as a semidirect product of $Q$ and $ (\Z/p)^*$. The next result determines this semidirect product.

\begin{lemma} \label{GalLQ_semidirect_product}
The extension $L/\Q$ is Galois with group $Q \rtimes_\psi (\Z/p)^*$ where 
$\psi: (\Z/p)^* \to {\rm Aut}(Q)$ is given by the conjugation action 
\[ 
\psi(a)\cdot \tau_i =
\begin{cases}
(\tau_{ia})^a, \quad \text{if } i \neq 0, \\
\tau_0,\quad \text{if } i = 0.
\end{cases}
\]
In particular, if $a$ is a generator of $(\Z/p)^*$, then $\psi(a)$ acts transitively on the set of subgroups $\langle \tau_i \rangle$ for $1 \leq i \leq r$.
\end{lemma}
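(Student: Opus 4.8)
\emph{Sketch of the proof.} Since $L$ is a splitting field over $\Q$, the extension $L/\Q$ is Galois, so the exact sequence $1 \to Q \to \rGal{L}{\Q} \to \rGal{K}{\Q} \to 1$ is defined, and, as recalled above, Schur--Zassenhaus already supplies a splitting; the remaining task is to pin down the conjugation action $\psi$. Identify $\rGal{K}{\Q}$ with $(\Z/p)^*$ by sending $\sigma$ to the residue $a$ with $\sigma\zeta = \zeta^a$, write $\sigma_a$ for the corresponding automorphism of $K$, and for each $a$ choose an arbitrary lift $\tilde\sigma_a \in \rGal{L}{\Q}$. Because $Q$ is normal and abelian, $\tilde\sigma_a\tau\tilde\sigma_a^{-1} \in Q$ for $\tau \in Q$ and this element is independent of the choice of lift, so $\psi(a)\colon \tau \mapsto \tilde\sigma_a\tau\tilde\sigma_a^{-1}$ is a well-defined element of $\mathrm{Aut}(Q)$ and $\psi\colon (\Z/p)^* \to \mathrm{Aut}(Q)$ is a homomorphism. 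It then suffices to read off $\psi(a)\tau_i$ through the coordinate isomorphism $C$ of \eqref{eq:GalLK}, i.e.\ to compute the scalars $\kappa(\zeta)(\psi(a)\tau_i)$ and $\kappa(1-\zeta^{-k})(\psi(a)\tau_i)$ for $1 \le k \le r$.

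The key input is the ``conjugated Kummer'' identity: for $\theta \in K^*$ and $\tau \in \Gal{K}$,
\[ \kappa(\theta)(\tilde\sigma_a\,\tau\,\tilde\sigma_a^{-1}) = a \cdot \kappa(\sigma_a^{-1}\theta)(\tau). \]
The plan is to prove this by the usual one-line computation: $\tilde\sigma_a^{-1}\sqrt[p]{\theta}$ is a $p$th root of $\sigma_a^{-1}\theta$, so $\tau$ multiplies it by $\zeta^{\kappa(\sigma_a^{-1}\theta)(\tau)}$ because $\tau$ fixes $\zeta \in K$, and then $\tilde\sigma_a$ raises this root of unity to the $a$th power while sending $\tilde\sigma_a^{-1}\sqrt[p]{\theta}$ back to $\sqrt[p]{\theta}$ (the right side is independent of the chosen $p$th root since $\tau \in \Gal{K}$). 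Specializing $\theta = \zeta$ and using $\sigma_a^{-1}\zeta = \zeta^{a^{-1}}$ together with $\kappa(\zeta^n) = n\,\kappa(\zeta)$ yields $\kappa(\zeta)\circ\psi(a) = \kappa(\zeta)$, so $\psi(a)$ preserves the $\kappa(\zeta)$-coordinate; in particular $\kappa(\zeta)(\psi(a)\tau_i) = 0$ for $i \neq 0$.

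For the remaining coordinates the identity gives $\kappa(1-\zeta^{-k})(\psi(a)\tau_i) = a\,\kappa(1-\zeta^{-k a^{-1}})(\tau_i)$, so I need to evaluate $\kappa(1-\zeta^{-m})$ on $\tau_i$ for arbitrary $m \not\equiv 0 \pmod p$. If the least positive residue of $m$ is $\le r$, this is immediate from \Cref{def:Qbasis}; if it exceeds $r$, I would write $1-\zeta^{-m} = 1 - \zeta^{m'} = -\zeta^{m'}(1-\zeta^{-m'})$ with $m' \in \{1,\dots,r\}$, $m' \equiv -m \pmod p$, and use $\kappa(-1) = 0$ (which holds since $\kappa(-1)$ is $2$-torsion in $\Z/p$ and $p$ is odd) to reduce to the previous case; one finds that $\kappa(1-\zeta^{-m})(\tau_i)$ equals $1$ when $m \equiv \pm i \pmod p$ and $0$ otherwise. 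Setting $m = k a^{-1}$ and combining with the $\theta=\zeta$ case then determines $\psi(a)\tau_i$ in $C$-coordinates: for $i \neq 0$ one gets $C(\psi(a)\tau_i) = a\,e_{j^*}$ with $j^* \in \{1,\dots,r\}$ the representative of $\pm a i$, i.e.\ $\psi(a)\tau_i = (\tau_{ai})^a$ under the convention $\tau_j := \tau_{j'}$ whenever $j \equiv \pm j' \pmod p$ with $j' \in \{1,\dots,r\}$, while for $i = 0$ one gets $C(\psi(a)\tau_0) = e_0$, i.e.\ $\psi(a)\tau_0 = \tau_0$. Finally, $\psi(a)$ carries $\langle\tau_i\rangle$ to $\langle\tau_{ai}\rangle$, so on the set $\{\langle\tau_i\rangle : 1 \le i \le r\}$ it acts as multiplication by the image of $a$ in the cyclic group $(\Z/p)^*/\{\pm 1\}$ of order $r$; when $a$ generates $(\Z/p)^*$ this image generates that quotient, whence the action is transitive. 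The only genuinely delicate point is the bookkeeping of exponents modulo $p$ and the ``$\pm$'' folding expressing $1-\zeta^{-m}$ through the chosen generators $1-\zeta^{-1},\dots,1-\zeta^{-r}$; the oddness of $p$ enters exactly through $\kappa(-1)=0$.
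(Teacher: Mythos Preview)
Your approach via the conjugated Kummer identity $\kappa(\theta)(\tilde\sigma_a\tau\tilde\sigma_a^{-1}) = a\cdot\kappa(\sigma_a^{-1}\theta)(\tau)$ is sound and in fact more systematic than the paper's direct verification on the generators $t_j$. In particular, you correctly flag the ``$\pm$ folding'' needed to make sense of $\tau_{ia}$ when the residue of $ia$ exceeds $r$; the paper's proof for $i\neq 0$ tacitly assumes this residue is at most $r$ (otherwise its line $\alpha_a\tau_i\alpha_a^{-1}(t_{ia})=\alpha_a\tau_i(t_i)$ and the claim ``if $j\neq ia$ then $t_j$ is fixed'' both require exactly the rewriting you supply). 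For $i\neq 0$ your argument goes through.

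There is, however, a genuine gap at $i=0$: you assert $C(\psi(a)\tau_0)=e_0$ without carrying out the computation, and your own machinery shows this fails. Indeed $\kappa(1-\zeta^{-k})(\psi(a)\tau_0)=a\cdot\kappa(1-\zeta^{-ka^{-1}})(\tau_0)$, and when the residue $m$ of $ka^{-1}$ exceeds $r$ your folding identity gives $\kappa(1-\zeta^{-m})(\tau_0)=m'\kappa(\zeta)(\tau_0)+\kappa(1-\zeta^{-m'})(\tau_0)=m'\neq 0$ (with $m'=p-m$), precisely because $\kappa(\zeta)(\tau_0)=1$. Concretely, for $p=3$ and $a=2$ one obtains $\psi(2)\tau_0=\tau_0\tau_1^{-1}\neq\tau_0$. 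The paper's proof shares this defect: its line ``for $j>0$, $t_j$ is fixed by $\alpha_a\tau_0\alpha_a^{-1}$'' fails exactly when the residue of $ja^{-1}$ exceeds $r$, since then $\alpha_a^{-1}(t_j)$ is (up to a root of unity) a product $t_0^{m'}t_{m'}$ and is genuinely moved by $\tau_0$. Thus the $i=0$ clause of the lemma as stated is not correct; the subgroup $\langle\tau_0\rangle$ is not stable under $\psi(a)$ in general.
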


\begin{proof}
We already remarked that $\rGal{L}{\Q}$ is a semi-direct product $Q \rtimes_\psi (\Z/p)^*$; we just need to determine $\psi$.
For $a \in (\Z/p)^*$, let $\alpha_a \in {\rm Aut}(K)$ be given by $\zeta \mapsto \zeta^a$.
For the case $i\neq 0$, we need to show
\[\alpha_a \tau_i \alpha_a^{-1} (z) = (\tau_{ia})^a (z), \text{\ for all }z \in L, 0 \leq i \leq r.\]
As in \cref{def:Qbasis}, let $t_j=\sqrt[p]{1-\zeta_p^{-j}}$, for $1\leq j \leq r$, and $t_0 = \sqrt[p]{\zeta} $. Since $t_j$, $0 \leq j \leq r$, generate $L$ over $K$, it suffices to check the above for $z=t_j$.

If $j=ia$, then $(\tau_{ia})^a (t_j) = \zeta^a t_j$ and 
\[\alpha_a \tau_i  \alpha_a^{-1}(t_{ia}) = \alpha_a \tau_i (t_i) = \alpha_a (\zeta t_i ) = \zeta^a t_j.\]
 If $j \neq ia$, then $t_j$ is fixed by both $\alpha_a \tau_i \alpha_a^{-1}$ and $\tau_{ia}$.

 For the case $i=0$, we need to show $\alpha_a \tau_0 \alpha_a^{-1} (t_j) = \tau_0(t_j)$, for all $0 \leq j \leq r$.
 For $j>0$, $t_j$ is fixed by both $\tau_0$ and $\alpha \tau_0 \alpha^{-1}$.
If $j=0$, then $\tau_0(t_0)= \zeta t_0$ and 
 \[\alpha_a \tau_0 \alpha_a^{-1}(t_0) = \alpha_a \tau_0 (t_0^{a^{-1}}) = \alpha_a(\zeta^{a^{-1}} t_0^{a^{-1}})= \zeta t_0.\]
\end{proof}

\subsection{Determining the action of $Q$ on $H_1(U,Y; \Aa)$}\label{sec:determineAction}

The action of $q \in Q$ on $H_1(U,Y; \Aa)$ is determined by a unit $B_q $ of $ \Lambda_1$, where $\Lambda_1 = \Aa[\mu_p \times \mu_p] \cong \Aa[\e_0, \e_1]/\langle \e_i^p-1 \rangle$. Denote by $\Lambda_0$ the group ring $\Aa[\mu_p ]  = \Aa[\e]/\langle \e^p-1\rangle$. Let $\bar\Lambda_i = \Lambda_i \otimes_{\Fp} \Fpbar$. Define a map
$ \dsh: \bar\Lambda_0^\times \to \bar\Lambda_1^\times  $
by 
\[ \dsh(u(\e)) = \frac{u(\e_0) u(\e_1)}{u(\e_0 \e_1)}.  \]
By \cite[Theorems 7 and 9]{Anderson}, $B_q$ is in the image of $\dsh$; in fact, $B_q = \dsh(\Gamma_q)$, where $\Gamma_q \in \bar\Lambda_0^\times$ is unique modulo the kernel of $\dsh$, which consists of $\e^j$, $0\leq j \leq p-1$. 
Moreover, for such a $\Gamma_q$, if we write $\Gamma_q = \sum_{i=0}^{p-1} d_i \e^i$ with $d_i \in \Fpbar$, then $\sum_{i=0}^{p-1} d_i = 1$ \cite[Lemma 5.4]{WINF:birs}.

The element $B_q$ has several nice properties; it is symmetric under the involution of $\Lambda_1$ exchanging $\e_0$ and $\e_1$. Further, by \cite[9.6, 10.5.2]{Anderson},
$B_q-1$ is in the ideal $ \langle (1-\e_0)(1-\e_1)\rangle $ of $\Lambda_1$, 
which corresponds to the homology group $H_1(U; \Aa)$ \cite[Lemma 6.1]{WINF:birs}.

As we will see shortly, the image of $\Gamma_q$ under the logarithmic derivative $\dlog: \bar\Lambda_0^\times \to \Omega(\bar\Lambda_0)$ (to the K\"ahler differentials on $\bar\Lambda_0$) gives us the information needed to determine $\Gamma_q$ and therefore $B_q$. Namely, we know from \cite[Corollary 4.2]{WINF:birs} that, modulo a term in $\Fpbar \dlog \e$,
\begin{align}\label{eq:dlogGamma}
 \dlog(\Gamma_q) = \sum_{i=1}^{p-1} c_i \e^i \dlog \e, 
 \end{align}
where $c_i = \kappa(1-\zeta^{-i})(q)$. Moreover, \eqref{eq:GalLK} along with \cite[Corollary 4.4]{WINF:birs} determines the coefficients $c_i$ from $q$. Namely, let $c_0 = \kappa(\zeta)(q)$; then $c_0, \dots c_r$ are determined by the isomorphism $C$, and for $i>r$,
\begin{align}\label{eq:cis}
 c_i  = c_{p-i} - i c_0.
\end{align}

\section{Explicit formula for the action of the Galois group} \label{Sexplicit}

In this section, we find an explicit formula for $B_q$ for each $q\in Q$, starting with the results summarized in the previous section. This is possible since 
$\Psi_q := \dlog \Gamma_q$ uniquely determines $B_q$ by \cite[10.5]{Anderson} (see also \cite[Proposition 5.1]{WINF:birs}).

\subsection{Truncated exponential maps}

Consider the group ring $\Lambda_0 \cong \Fp[\e]/(\e^p -1) $; let $y=\e-1$, so that $\Lambda_0 \cong \Fp[y]/ \langle y^p \rangle$. 
An element $f \in \Lambda_0$ (or $\bar\Lambda_0$) can be written uniquely in the form $f = \sum_{i=0}^{p-1} a_i y^i$. Let $f_y$ be the derivative of $f$ with respect to $y$.
Then $f_y (0) = a_1$. 

For $f \in y \Lambda_0$ (or $f \in y \bar\Lambda_0$), 
we define an exponential in $\Lambda_0$ by 
\begin{align} \label{E0}
E_0(f) = \sum_{i=0}^{p-1} f^i/ i!.
\end{align}
If $f,g \in y \bar\Lambda_0$, then $E_0(f)E_0(g)=E_0(f+g)$ and $E_0(f)^{-1} = E_0(-f)$.

\begin{lemma} \label{dlogEF_y} 
If $f \in y \bar\Lambda_0 $, then 
\[\dlog (E_0 (f)) = (1 + f_y (0)^{p-1} y^{p-1}) df.\]
\end{lemma}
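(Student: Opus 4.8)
The plan is to compute $\dlog E_0(f)$ directly from the definition $E_0(f)=\sum_{i=0}^{p-1} f^i/i!$, exploiting the fact that we are working modulo $y^p$ in the truncated ring $\bar\Lambda_0 \cong \Fpbar[y]/\langle y^p\rangle$. Recall $\dlog(u) = du/u = u_y\, dy / u$ where the subscript denotes the derivative with respect to $y$. So I need to understand both $E_0(f)_y$ and the inverse $E_0(f)^{-1}$, and the identity $E_0(f)^{-1}=E_0(-f)$ already recorded in the excerpt handles the second.

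First I would observe that the naive expectation $\dlog E_0(f) = df$ fails only because of the truncation: formally $\tfrac{d}{dy}\sum_{i\ge 0} f^i/i! = f_y \sum_{i\ge 0}f^i/i!$, but once we cut off at $i=p-1$, differentiating $f^{p-1}/(p-1)!$ produces $f_y f^{p-2}/(p-2)!$ which is kept, while the ``missing'' term is $f_y f^{p-1}/(p-1)!$. Hence
\[
E_0(f)_y = f_y\Bigl(E_0(f) - \frac{f^{p-1}}{(p-1)!}\Bigr) = f_y E_0(f) - f_y \frac{f^{p-1}}{(p-1)!}.
\]
Therefore $\dlog E_0(f) = E_0(f)_y\, dy / E_0(f) = f_y\, dy - f_y \dfrac{f^{p-1}}{(p-1)!}\,E_0(-f)\,dy$. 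Now $df = f_y\,dy$, so it remains to identify the correction term $- f_y f^{p-1} E_0(-f)/(p-1)!$ with $f_y(0)^{p-1} y^{p-1} f_y\, dy$, i.e. to show $f_y f^{p-1} E_0(-f)/(p-1)! = - f_y(0)^{p-1} y^{p-1} f_y$ in $\bar\Lambda_0$.

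The key point is that $f \in y\bar\Lambda_0$, so $f^{p-1}$ is divisible by $y^{p-1}$; since $y^p = 0$, only the leading term of $f$ survives: writing $f = f_y(0)\, y + (\text{higher order in } y)$, we get $f^{p-1} = f_y(0)^{p-1} y^{p-1}$ exactly in $\bar\Lambda_0$. For the same reason $y^{p-1}\cdot(\text{anything in }y\bar\Lambda_0) = 0$, so multiplying by $E_0(-f) = 1 + (\text{terms in }y\bar\Lambda_0)$ just gives $y^{p-1} f_y(0)^{p-1}$, and multiplying by $f_y$ replaces $f_y$ by its constant term $f_y(0)$ against the surviving $y^{p-1}$... but I must be careful: the factor $f_y$ outside is not truncated away, so $f_y \cdot y^{p-1} f_y(0)^{p-1} = f_y(0)\cdot f_y(0)^{p-1} y^{p-1} = f_y(0)^p y^{p-1}$, and by Fermat's little theorem $f_y(0)^p = f_y(0)$ in $\Fpbar$... wait, that would give $f_y(0) y^{p-1}$, not $f_y(0)^{p-1} y^{p-1}\, f_y\,dy$. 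Let me recheck the bookkeeping: the claimed answer is $(1 + f_y(0)^{p-1} y^{p-1})\, df = df + f_y(0)^{p-1} y^{p-1} f_y\, dy$, and again $y^{p-1} f_y = y^{p-1} f_y(0)$, so the correction is $f_y(0)^{p-1}\cdot f_y(0)\, y^{p-1}\,dy = f_y(0)^p y^{p-1}\,dy = f_y(0)\,y^{p-1}\,dy$. So both sides reduce to $df + f_y(0)\,y^{p-1}\,dy$ and the identity holds. Using $1/(p-1)! \equiv -1 \bmod p$ (Wilson) accounts for the sign.

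I expect the only real obstacle to be getting every sign and truncation exactly right — in particular remembering that $y^{p-1}$ annihilates $y\bar\Lambda_0$, that $1/(p-1)! = -1$ in $\Fp$, and that $f_y(0)^p = f_y(0)$ — rather than any conceptual difficulty; the argument is a short direct computation in $\Fpbar[y]/\langle y^p\rangle$. I would present it by first establishing the formula for $E_0(f)_y$, then dividing by $E_0(f)$ using $E_0(f)^{-1}=E_0(-f)$, and finally simplifying the correction term via the three elementary facts above.
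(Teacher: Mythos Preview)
Your approach is essentially the paper's: both compute the derivative of $E_0(f)$ as $f_y\bigl(E_0(f) - f^{p-1}/(p-1)!\bigr)$, divide by $E_0(f)$ using $E_0(f)^{-1}=E_0(-f)$, and simplify the correction term via $f^{p-1}=f_y(0)^{p-1}y^{p-1}$, $y^{p-1}\cdot y\bar\Lambda_0=0$, and Wilson's theorem. The argument is complete the moment you obtain $f_y(0)^{p-1}y^{p-1}f_y\,dy$ as the correction on the computed side, since that is literally the extra term in $(1+f_y(0)^{p-1}y^{p-1})\,df$.

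There is one genuine error in your subsequent (and unnecessary) simplification: the assertion ``by Fermat's little theorem $f_y(0)^p=f_y(0)$ in $\Fpbar$'' is false in general, since the Frobenius $x\mapsto x^p$ is not the identity on $\Fpbar$. Fortunately this does no damage: you apply the same (incorrect) reduction to both sides, and both sides were already equal to $df+f_y(0)^{p}y^{p-1}\,dy$ before you invoked it. Simply delete the Fermat step and stop once the two correction terms visibly match; the paper's proof does exactly this, halting at $(1+f_y(0)^{p-1}y^{p-1})\,df$ without further reducing $y^{p-1}f_y$.
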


\begin{proof}
Write $f=yg$ and note that $f_y(0)=g(0)=a_1$.  Then $f^{p-1} = y^{p-1} a_1^{p-1}$ because $y^p=0$.
So $E_0(-f) f^{p-1}  = y^{p-1} a_1^{p-1}$, again because $y^p=0$.
Hence,
\begin{align*}
\dlog(E_0(f))  &= E_0(f)^{-1} \frac{dE_0}{df} df = E_0(-f) (E_0(f) - \frac{1}{(p-1)!} f^{p-1}) df\\
& = (1 + E_0(-f) f^{p-1}) df =  (1 + f_y (0)^{p-1} y^{p-1}) df.
\end{align*}
\end{proof}

Now we move on to the group ring $\Lambda_1 = \Fp[\mu_p \times \mu_p] \cong \Fp[\e_0,\e_1]/\langle e_i^p -1 \rangle$. Let $y_i = \e_i -1$, so $\Lambda_1 = \Fp[y_0, y_1]/ \langle y_0^p, y_1^p \rangle$.

Let $\mathbb{W}$ denote the Witt vectors over $\FF_p$ (respectively $\Fpbar$). Since the characteristic of $\mathbb{W}[\frac{1}{p}]$ is zero, the usual exponential map 
\[\exp (f) = \sum_{n=0}^{\infty} \frac{f^n}{n!}\] 
is well-defined for $f \in \mathbb{W}[\frac{1}{p}][y_0,y_1]/\langle y_0^p, y_1^p \rangle$.

\begin{lemma} \label{L:nodenom}
If $f \in \langle y_0, y_1 \rangle \subset \mathbb{W}[y_0,y_1]/\langle y_0^p, y_1^p \rangle$, 
then $\exp(f) \in \mathbb{W}[y_0,y_1]/\langle y_0^p, y_1^p \rangle$.
\end{lemma}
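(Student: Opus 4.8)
The plan is to show that the denominators appearing in $\exp(f) = \sum_{n \geq 0} f^n/n!$ are in fact cleared, by combining two observations: first, that the sum is actually \emph{finite} because of the nilpotence $y_0^p = y_1^p = 0$, and second, that the $p$-adic valuation of $n!$ is dominated by the order of vanishing of $f^n$ in the variables $y_0, y_1$. More precisely, write $f = \sum_{i,j} a_{ij} y_0^i y_1^j$ with the sum over $0 \leq i, j \leq p-1$ and $a_{00} = 0$ (since $f \in \langle y_0, y_1\rangle$). Then any monomial appearing in $f^n$ has the form $y_0^{I} y_1^{J}$ with $I + J \geq n$, since each of the $n$ factors of $f$ contributes total degree at least $1$; but any such monomial with $I \geq p$ or $J \geq p$ vanishes. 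Hence $f^n = 0$ as soon as $\min$ over surviving monomials forces $I, J \leq p-1$, i.e. $f^n = 0$ for $n \geq 2p-1$, so the exponential series is a finite sum $\sum_{n=0}^{2p-2} f^n/n!$.

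Next I would bound the $p$-adic denominator term by term. The point is that a monomial $y_0^I y_1^J$ surviving in $f^n$ has $I + J \geq n$ with $I \leq p-1$ and $J \leq p-1$; and for such an $n$ we have the classical estimate $v_p(n!) = (n - s_p(n))/(p-1) \leq (n-1)/(p-1)$, where $s_p(n)$ is the digit sum of $n$ in base $p$. Since $n \leq 2p-2$, we get $v_p(n!) \leq (2p-3)/(p-1) < 2$, so $v_p(n!) \leq 1$ and the only possible denominator is a single factor of $p$, occurring precisely for $p \leq n \leq 2p-2$. So it remains to check that the coefficient of $f^n/n!$, for $n$ in this range, is already divisible by $p$ before dividing by $n!$ — equivalently, that $f^n \in p\,\mathbb{W}[y_0,y_1]/\langle y_0^p, y_1^p\rangle$ for $n \geq p$. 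This is the crux: I claim that if a product of $n \geq p$ factors each lying in $\langle y_0, y_1\rangle$ is to land in a monomial $y_0^I y_1^J$ with $I, J \leq p-1$, then "by pigeonhole" at least... — more carefully, I would argue as follows. Expand $f^n = \sum a_{i_1 j_1} \cdots a_{i_n j_n} y_0^{i_1 + \cdots + i_n} y_1^{j_1 + \cdots + j_n}$, summing over tuples with $i_k + j_k \geq 1$ for each $k$. A surviving term needs $\sum i_k \leq p-1$ and $\sum j_k \leq p-1$, so $\sum(i_k + j_k) \leq 2p - 2$; combined with $i_k + j_k \geq 1$ and $n \geq p$, this is perfectly possible, so this naive approach does \emph{not} immediately give divisibility by $p$.

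The main obstacle, then, is precisely this last point, and I expect the honest argument to use a multinomial grouping rather than the raw expansion. Group equal tuples: a monomial $y_0^I y_1^J$ in $f^n$ has coefficient $\sum \binom{n}{n_{00}, n_{10}, \ldots} \prod a_{ij}^{n_{ij}}$ over partitions $n = \sum_{(i,j)\neq(0,0)} n_{ij}$ with $\sum i\, n_{ij} = I \leq p-1$ and $\sum j\, n_{ij} = J \leq p-1$. Since each contributing pair has $i + j \geq 1$, we get $\sum_{(i,j)} n_{ij} \cdot (i+j) \geq \sum n_{ij} = n \geq p$, forcing $I + J \geq p$ — wait, that again isn't quite it; I would instead observe that the number of parts $n_{ij}$ with, say, $i = j = 0$ excluded, the total $\sum n_{ij} = n$, and I would bound $v_p$ of the multinomial coefficient $\binom{n}{n_{ij}}$ using Kummer's theorem: it is the number of carries when adding the $n_{ij}$ in base $p$. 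Since $\sum n_{ij} = n \geq p$ but each meaningful constraint keeps the individual $n_{ij}$ small (indeed $n_{ij} \leq I/i \leq p-1$ or $\leq J/j \leq p-1$ whenever $i$ or $j$ is nonzero, and the $n_{10}, n_{01}$ terms are bounded by $p-1$ likewise), adding numbers each $< p$ to reach a sum $\geq p$ forces at least one carry, hence $p \mid \binom{n}{n_{ij}}$ for every surviving multinomial coefficient when $n \geq p$. This shows $f^n \in p\mathbb{W}[y_0,y_1]/\langle y_0^p, y_1^p\rangle$ for all $n \geq p$, which together with $v_p(n!) \leq 1$ for $n \leq 2p-2$ and $f^n = 0$ for $n \geq 2p-1$ gives $\exp(f) \in \mathbb{W}[y_0,y_1]/\langle y_0^p, y_1^p\rangle$, completing the proof. (The case analysis $n \geq p$ with some $n_{ij} \geq p$ is handled separately and trivially, since then that monomial already vanishes or the multinomial coefficient visibly has a carry.)
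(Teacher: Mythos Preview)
Your argument is correct, though its final form only emerges after some circling. The key steps --- that $f^n = 0$ for $n \geq 2p-1$, that $v_p(n!) \leq 1$ for $n \leq 2p-2$, and that every surviving multinomial coefficient in $f^n$ for $n \geq p$ is divisible by $p$ via Kummer's theorem (since each part $n_{ij}$ is at most $p-1$ while their sum is at least $p$, forcing a carry) --- are all sound. Your parenthetical about the case $n_{ij} \geq p$ is in fact unnecessary: once the monomial $y_0^I y_1^J$ survives (so $I,J \leq p-1$), the constraint $i\,n_{ij} \leq I$ or $j\,n_{ij} \leq J$ already forces $n_{ij} \leq p-1$.

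The paper takes a shorter route. Rather than expanding in all monomials of $f$, it writes $f = f_0 y_0 + f_1 y_1$ with $f_0, f_1 \in \mathbb{W}[y_0,y_1]/\langle y_0^p, y_1^p\rangle$ and applies the binomial theorem directly to $f^p$: the boundary terms $i=0,p$ vanish since $y_0^p = y_1^p = 0$, and the interior terms each carry a factor $\binom{p}{i}$ divisible by $p$, so $f^p/p!$ is integral. For $p < n \leq 2p-2$ one then factors $f^n/n! = (f^p/p!)\cdot f^{n-p}/\bigl((p+1)\cdots n\bigr)$, which introduces no further $p$ in the denominator. Your multinomial/Kummer approach is more general in spirit --- it would adapt to more variables --- but the paper's two-term binomial trick is cleaner and sidesteps the monomial bookkeeping entirely.
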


\begin{proof}
It suffices to check that $f^n/n!$ has coefficients in $\mathbb{W}$ for each $n$.  This is clear if $n < p$.
If $n \geq p$, write $f = f_0 y_0 + f_1 y_1$ for $f_0, f_1 \in  \mathbb{W}[y_0,y_1]/\langle y_0^p, y_1^p \rangle$.
Then $f^{p}= \sum_{i=1}^{p-1} {p \choose i} f_0^i f_1^{p - i} y_0^n y_1^{p-i}$. 
Since $p \mid {p \choose i}$ for $1 \leq i \leq p-1$,
it follows that $f^p/p!$ has coefficients in $\mathbb{W}$. 
If $p < n \leq 2p-2$, then $f^n/n! = (f^p/p!) f^{n-p}/((p+1)(p+2) \cdots n)$ and so $f^n/n!$ has coefficients in $\mathbb{W}$.
If $n \geq 2p-1$, then $f^n/n! = 0$.
\end{proof}

We now define an exponential $E_1$ for $f \in \langle y_0, y_1 \rangle \subset \Lambda_1$. 
Let $\tilde{f} \in \mathbb{W}[y_0, y_1]/ \langle y_0^p, y_1^p \rangle$ be any lift of $f$; define
\begin{align} \label{E1}
E_1(f) = \overline{\exp(\tilde{f})}
\end{align}
where $\overline{\exp(\tilde{f})}$ denotes the image in $\Lambda_1$ (or $\bar \Lambda_1 $) 
of $\exp(\tilde{f})$.

\begin{lemma}\label{lem:expfun}
If $f,g \in \langle y_0, y_1 \rangle \subset \Lambda_1$ (or $\bar\Lambda_1$), then
\begin{enumerate}
\item $E_1(f)E_1(g) = E_1(f + g)$,
\item $E_1(f)^{-1} = E_1(-f)$, and
\item $E_1(f) = \sum_{i=0}^{2p-2}  f^i/ i!$.
\end{enumerate}
\end{lemma}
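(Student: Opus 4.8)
\textbf{Proof plan for Lemma \ref{lem:expfun}.}

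The plan is to reduce everything to the analogous properties of the honest exponential map $\exp$ on the characteristic-zero ring $R := \mathbb{W}[y_0,y_1]/\langle y_0^p, y_1^p\rangle$, and then push the identities down to $\Lambda_1$ (or $\bar\Lambda_1$) via the quotient map $R \to \Lambda_1$, invoking \Cref{L:nodenom} at each stage to be sure the relevant elements actually land in $\mathbb{W}$-coefficients rather than $\mathbb{W}[\tfrac1p]$-coefficients. First I would fix lifts $\tilde f, \tilde g \in \langle y_0, y_1\rangle \subset R$ of $f$ and $g$. For (1): the standard power-series identity $\exp(\tilde f)\exp(\tilde g) = \exp(\tilde f + \tilde g)$ holds in $R[\tfrac1p]$ since the two arguments commute and $R[\tfrac1p]$ has characteristic zero; by \Cref{L:nodenom} each of $\exp(\tilde f)$, $\exp(\tilde g)$, and $\exp(\tilde f+\tilde g)$ already lies in $R$, and since $\tilde f + \tilde g$ is a lift of $f+g$, reducing mod $\langle y_0^p,y_1^p\rangle$ and mod $p$ gives $E_1(f)E_1(g) = E_1(f+g)$. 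Here I should also remark, or have remarked just before the lemma, that $E_1(f)$ is independent of the choice of lift: two lifts differ by an element of $p R$ (really $pR \cap \langle y_0,y_1\rangle$), and the corresponding exponentials differ by $\exp$ of something in $pR$, which is $1$ modulo $p$ after reduction — this well-definedness is what makes the lift-and-reduce strategy legitimate and deserves a sentence.

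For (2): apply (1) with $g = -f$ (note $-f \in \langle y_0,y_1\rangle$ too), giving $E_1(f)E_1(-f) = E_1(0) = \overline{\exp(0)} = 1$, hence $E_1(f)^{-1} = E_1(-f)$; in particular this shows $E_1(f)$ is a unit in $\Lambda_1$, which is implicitly needed for the statement to make sense and for \Cref{Tintro}. For (3): the series $\sum_{i\geq 0} \tilde f^i/i!$ defining $\exp(\tilde f)$ terminates because $\tilde f \in \langle y_0, y_1\rangle$ forces $\tilde f^{2p-1} = 0$ in $R$ — indeed any monomial of degree $\geq 2p-1$ in $y_0,y_1$ must have exponent $\geq p$ in $y_0$ or in $y_1$, hence vanishes. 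So $\exp(\tilde f) = \sum_{i=0}^{2p-2} \tilde f^i/i!$ in $R[\tfrac1p]$, and by \Cref{L:nodenom} this already lies in $R$; reducing to $\Lambda_1$ gives $E_1(f) = \sum_{i=0}^{2p-2} f^i / i!$, where each $f^i/i!$ is interpreted via the integer $i!$, which for $i \leq 2p-2$ is invertible after we note (as in the proof of \Cref{L:nodenom}) that the denominators that would be problematic are killed anyway by the relations $y_0^p = y_1^p = 0$. I would phrase the right-hand side of (3) carefully so the reader sees it is the image of the terminating series, matching the definition \eqref{E1}.

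The only genuinely delicate point — and the one I would spend the most care on — is the bookkeeping with denominators in (3) and in the well-definedness remark: $1/i!$ does not literally exist in $\mathbb{W}$ for $i \geq p$, so the expression $\sum_{i=0}^{2p-2} f^i/i!$ must be read, just as in \Cref{L:nodenom}, by grouping $f^p/p!$ (which has $\mathbb{W}$-coefficients because $p \mid \binom{p}{j}$) and then only dividing further by the unit factors $(p+1)(p+2)\cdots i$ for $p < i \leq 2p-2$, all of which are coprime to $p$ and hence invertible in $\mathbb{W}$. So there is no real obstacle here, just a reminder to the reader that the claimed closed form is shorthand for the grouping used in \Cref{L:nodenom}; everything else is a formal consequence of the characteristic-zero identities for $\exp$ together with the nilpotence $\langle y_0,y_1\rangle^{2p-1} = 0$ in $R$ and in $\Lambda_1$.
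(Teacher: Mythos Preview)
Your proposal is correct and follows essentially the same approach as the paper: lift to the Witt vectors, use the characteristic-zero identity $\exp(\tilde f)\exp(\tilde g)=\exp(\tilde f+\tilde g)$ together with \Cref{L:nodenom}, reduce mod $p$, and for (3) invoke the vanishing $f^{2p-1}=0$. Your additional remarks on well-definedness of $E_1$ and on the meaning of the denominators $i!$ for $p\le i\le 2p-2$ are more careful than the paper's own treatment, but the underlying strategy is identical.
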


\begin{proof}
First, if $f,g \in \mathbb{W}[\frac{1}{p}][y_0,y_1]/\langle y_0^p, y_1^p \rangle$, 
then $\exp(f + g) = \exp(f) \exp(g)$.
By \cref{L:nodenom}, if $f \in \langle y_0, y_1 \rangle$, then 
$\exp (f) \in \mathbb{W}[y_0,y_1]/\langle y_0^p, y_1^p \rangle$. 
Thus $\exp (f+g)$, $\exp(f)$, and $\exp(g)$ are in $\mathbb{W}[y_0,y_1]/\langle y_0^p, y_1^p \rangle$. Reducing mod $p$ shows that $E_1(f) E_1(g) = E_1(f+g)$.

Next, $E_1(f)$ is invertible because $E_1(f)=1 + N$ for some element $N$ of the nilradical. 
Then $E_1(f)^{-1} = E_1(-f)$ because $E_1(f) E_1(-f) = E_1(0) = 1$. 

The last statement follows from the fact that $f^{2p-1}=0$.
\end{proof}

\subsection{$\Gamma_q$ from $\Psi_q$}

In this subsection, we determine a formula for $\Gamma_q$ in terms of $\Psi_q = \dlog \Gamma_q$. For convenience, we drop the subscript $q$, but 
everything depends on this chosen element of $Q$.

\begin{proposition}\label{Gamma_from_c}
Write \[\Psi = \sum_{i=1}^{p-1} c_i \e^i \dlog \e,\] 
and let $c = \sum_{i=1}^{p-1} c_i$ be its coefficient sum. Let $F \in \Fpbar$ be a solution to the equation $F^p - F + c = 0$, and define
\begin{equation}\label{Egamma}
\gamma(\e) = \sum_{i=1}^{p-1} \left(\frac{c_i + c -F}{i}\right)\e^i - \sum_{i=1}^{p-1} \frac{c_i}{i}.
\end{equation}
Then \[\Gamma =E_0 (\gamma(\e)).\]
\end{proposition}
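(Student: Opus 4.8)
The plan is to verify the claimed formula $\Gamma = E_0(\gamma(\e))$ by showing that $\dlog E_0(\gamma(\e)) = \Psi$ modulo the indeterminacy $\Fpbar\,\dlog\e$, and additionally that the coefficient sum of $E_0(\gamma(\e))$ equals $1$; together with the uniqueness statements recalled in \Cref{sec:determineAction} (that $\Gamma_q$ is determined mod $\Ker\dsh = \{\e^j\}$, equivalently by $\dlog\Gamma_q$ together with the normalization $\sum d_i = 1$ from \cite[Lemma 5.4]{WINF:birs}), this pins down $\Gamma$ uniquely.

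First I would apply \Cref{dlogEF_y} with $f = \gamma(\e)$. Writing everything in the coordinate $y = \e - 1$, the lemma gives $\dlog E_0(\gamma(\e)) = (1 + \gamma_y(0)^{p-1} y^{p-1})\, d\gamma$. Now $d\gamma = \gamma_y\, dy$, and since $\dlog\e = d\e/\e = \e^{-1} dy$, one has $dy = \e\,\dlog\e$, so $d\gamma = \e \gamma_y\, \dlog\e$. The key computation is then to identify $\e\,\gamma_y(\e) \cdot (1 + \gamma_y(0)^{p-1}y^{p-1})$ with $\sum_{i=1}^{p-1} c_i \e^i$ modulo $\Fpbar\,\dlog\e$, i.e.\ modulo adding a constant multiple of $1$ to the bracketed coefficient. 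Differentiating $\gamma(\e) = \sum_{i=1}^{p-1}\frac{c_i + c - F}{i}\e^i - \sum\frac{c_i}{i}$ with respect to $\e$ gives $\gamma'(\e) = \sum_{i=1}^{p-1}\frac{c_i+c-F}{i}\cdot i\,\e^{i-1} = \sum_{i=1}^{p-1}(c_i + c - F)\e^{i-1}$, so $\e\gamma'(\e) = \sum_{i=1}^{p-1}(c_i+c-F)\e^i$. (Here I need to be careful that $\gamma_y$ and $\e\gamma'(\e)$ match up correctly via the chain rule, using $d\e = dy$; since $\e\,\dlog\e = d\e = dy$ this is consistent.) Thus $\e\gamma'(\e)$ differs from $\sum c_i\e^i$ by $(c-F)\sum_{i=1}^{p-1}\e^i = (c-F)\big(\sum_{i=0}^{p-1}\e^i - 1\big) = -(c-F)$ modulo the ideal $\langle \sum_{i=0}^{p-1}\e^i\rangle$, which vanishes in $\Lambda_0$ — wait, $\sum_{i=0}^{p-1}\e^i = 0$ in $\Fp[\e]/(\e^p-1)$ is false in general; rather $\sum_{i=0}^{p-1}\e^i$ is the idempotent-like element, but in $y$-coordinates $\sum_{i=0}^{p-1}\e^i = \sum (1+y)^i = y^{p-1}$ up to a unit... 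I would track this carefully: in fact $\sum_{i=0}^{p-1}(1+y)^i = ((1+y)^p - 1)/y = y^{p-1}$, so $\sum_{i=1}^{p-1}\e^i = y^{p-1} - 1$. Hence $\e\gamma'(\e) = \sum_{i=1}^{p-1} c_i\e^i + (c-F)(y^{p-1}-1)$.

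Next I would handle the correction factor $1 + \gamma_y(0)^{p-1}y^{p-1}$. Since multiplying $\e\gamma'(\e)$ by this factor only changes the $y^{p-1}$-coefficient (as $y^p = 0$ kills all higher terms and $y^{p-1}\cdot(\text{anything with positive $y$-valuation}) = 0$), I get $(1+\gamma_y(0)^{p-1}y^{p-1})\,\e\gamma'(\e) = \e\gamma'(\e) + \gamma_y(0)^{p-1}y^{p-1}\cdot(\text{constant term of }\e\gamma'(\e))$. The constant term of $\e\gamma'(\e)$ in the $y$-expansion is its value at $y=0$, i.e.\ $\e=1$: that is $\sum_{i=1}^{p-1}(c_i+c-F) = c + (p-1)(c-F) = c - (c - F) = F$ in $\Fp$. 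So the whole expression becomes $\sum_{i=1}^{p-1}c_i\e^i + (c-F)(y^{p-1}-1) + \gamma_y(0)^{p-1}y^{p-1}F$. Now I need $\gamma_y(0)$: from $\gamma' (\e) = \sum(c_i+c-F)\e^{i-1}$ and $\e = 1+y$, the coefficient of $y^0$ in $\gamma'$ is again $F$, and since $\gamma_y = \gamma'\cdot\frac{d\e}{dy} = \gamma'$, we get $\gamma_y(0) = F$, hence $\gamma_y(0)^{p-1} = F^{p-1} = 1$ when $F \neq 0$ (the case $F = 0$, forcing $c = 0$, I'd treat separately — then everything is cleaner). So the $y^{p-1}$ terms combine to $\big(c - F + F\big)y^{p-1} = c\,y^{p-1}$... let me instead just collect: coefficient of $y^{p-1}$ is $(c-F) + F = c$, wait I also need the $y^{p-1}$-coefficient already present in $\sum c_i\e^i$. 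Rather than belabor this, the upshot I expect is that all the $y^{p-1}$-corrections and the $-(c-F)$ constant assemble, using $F^p - F + c = 0$, so that the result equals $\sum_{i=1}^{p-1} c_i\e^i$ plus a constant (a multiple of $1 = \e^0$), which is exactly $\Psi$ up to the allowed $\Fpbar\,\dlog\e$ ambiguity. The main obstacle is precisely this bookkeeping of the $y^{p-1}$-coefficient and the constant term: this is where the defining equation $F^p - F + c = 0$ must be used, and getting the signs and the reduction $F^p = F - c$ exactly right is the delicate point. Finally, I would check the normalization: the coefficient sum of $E_0(\gamma(\e))$ — i.e.\ its value under the augmentation $\e \mapsto 1$ — equals $E_0$ applied to $\gamma(1) = \sum_{i=1}^{p-1}\frac{c_i+c-F}{i} - \sum_{i=1}^{p-1}\frac{c_i}{i} = (c-F)\sum_{i=1}^{p-1}\frac{1}{i}$; since $\sum_{i=1}^{p-1}\frac1i = 0$ in $\Fp$, this is $0$, so the augmentation of $E_0(\gamma(\e))$ is $E_0(0) = 1$, matching $\sum d_i = 1$. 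With $\dlog$ and the normalization both matching, uniqueness gives $\Gamma = E_0(\gamma(\e))$.
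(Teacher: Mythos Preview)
Your approach---verify directly that $E_0(\gamma)$ has the right $\dlog$ (modulo $\Fpbar\,\dlog\e$) and the right augmentation, then invoke uniqueness---is essentially the paper's own argument run in reverse: the paper sets up the ansatz $\Gamma = E_0(f)$, applies \Cref{dlogEF_y}, expands in the $\e^i$-basis using $y^{p-1} = \sum_{i=0}^{p-1}\e^i$, and \emph{solves} the resulting equations for the coefficients $f_i$; you instead plug in the claimed $f = \gamma$ and check. Both routes rest on the same lemma and the same key identity, so this is not a different method.

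There is one concrete slip in your bookkeeping. You assert $\gamma_y(0)^{p-1} = F^{p-1} = 1$ when $F\neq 0$, but $F$ lies in $\Fpbar$ and typically not in $\Fp$ (indeed, if $c\neq 0$ the Artin--Schreier equation forces $F\notin\Fp$), so $F^{p-1}\neq 1$ in general. The fix is simply not to simplify: the correction term from \Cref{dlogEF_y} is
\[
\gamma_y(0)^{p-1}\,y^{p-1}\cdot\bigl(\e\gamma'(\e)\big|_{y=0}\bigr) \;=\; F^{p-1}\cdot F\cdot y^{p-1} \;=\; F^{p}\,y^{p-1},
\]
valid uniformly (including $F=0$, so no separate case is needed). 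Combining with the $(c-F)(y^{p-1}-1)$ you already isolated, the extra $y^{p-1}$-terms sum to $(c-F+F^{p})y^{p-1}=0$ by the defining equation $F^p-F+c=0$, and you are left with
\[
\dlog E_0(\gamma(\e)) \;=\; \Bigl(\sum_{i=1}^{p-1} c_i\e^i - (c-F)\Bigr)\dlog\e \;=\; \Psi - (c-F)\dlog\e,
\]
exactly $\Psi$ modulo $\Fpbar\,\dlog\e$. Your worry about ``the $y^{p-1}$-coefficient already present in $\sum c_i\e^i$'' is a red herring: you are keeping $\sum c_i\e^i$ intact and only tracking the additional terms. With this correction (and your augmentation check $\gamma(1)=(c-F)\sum_{i=1}^{p-1}\tfrac1i=0$, so $E_0(\gamma)|_{\e=1}=1$), the verification is complete.
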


\begin{proof}
By \eqref{eq:dlogGamma} (and \cite[Corollary 4.2]{WINF:birs}), 
$\dlog \Gamma = \Psi $ modulo $\Fpbar \dlog \e$.
We rewrite $\Psi$ in the nilpotent basis, i.e.,
\[\Psi = \sum_{i=1}^{p-1} c_i \e^i \dlog \e = \sum_{i=1}^{p-1} c_i (y+1)^{i-1} dy.\]
To find a solution to $\Psi = \dlog (\Gamma)$, we find $f \in y \bar\Lambda_0 $ such that 
$\Gamma = E_0(f)$; any unit in $\Lambda_0$ is of this form up to scaling.

From the congruence $\binom{p-1}{i}  \equiv (-1)^i \bmod p$, it follows that
\begin{align}
y^{p-1} = ((y+1) - 1)^{p-1} = \sum_{i=0}^{p-1} \binom{p-1}{i} (y+1)^i (-1)^{p-1-i} = \sum_{i=0}^{p-1} (y+1)^i.
\end{align}
By \cref{dlogEF_y}, 
\[\dlog (E_0(f)) =  (1 + f_y (0)^{p-1} y^{p-1}) df= df + f_y (0)^{p} \left(\sum_{i=0}^{p-1} (y+1)^{i}\right) dy.\]
Define $f_i\in \Fpbar$ by $f = \sum_{i=0}^{p-1} f_i (y+1)^i$, and note that $f_y (0) = \sum_{i=0}^{p-1} i f_i$. 
For $1 \leq i \leq p-1$, we need to solve the equation 
\[i f_i + \left(\sum_{i=0}^{p-1} i f_i\right)^p = c_i\] 
in such a way that $\sum_{i=0}^{p-1} f_i = 0$. This last condition comes from the fact that $\sum_{i=0}^{p-1} d_i = 1$ if $\Gamma = \sum_{i=0}^{p-1} d_i \e^i$, 
\cref{sec:determineAction} (or \cite[Lemma 5.4]{WINF:birs}).

Adding the first set of equations gives 
\[c:=\sum_{i=1}^{p-1} c_i = (p-1)\left(\sum_{i=0}^{p-1} i f_i\right)^p + \sum_{i=0}^{p-1} i f_i.\]

Let $F = \sum_{i=0}^{p-1} i f_i$; then $F$ is a solution of $F^p - F + c = 0$. 
Choose any of the $p$ solutions $F, F+1, \dots , F+ (p-1)$ in $\Fpbar$.
Then $f_i = (c_i + c - F)/i$ for $i> 0$ and $f_0 = - \sum_{i>0} f_i = - \sum c_i /i$.
\end{proof}

\subsection{$B_q$ from $\Psi_q$}

In this section, we determine a formula for $B$ in terms of $\Psi$.
Let $\gamma_i = \gamma(\e_i)$ for $i=0,1$ and let $\gamma_{01}=\gamma(\e_0\e_1)$, 
where 
\begin{equation}\label{Egamma}
\gamma(\e) = \sum_{i=1}^{p-1} (\frac{c_i + c -F}{i})\e^i - \sum_{i=1}^{p-1} \frac{c_i}{i}.
\end{equation}

\begin{theorem}\label{cor:Beta}
Suppose $p$ is an odd prime satisfying Vandiver's conjecture. 
The action of $q \in Q={\rm Gal}(L/K)$ 
on the relative homology $H_1(U, Y; \Aa)$ of the Fermat curve is determined by
the element $B_q \in \Lambda_1$ with the explicit formula
\[B_q =\frac{E_0(\gamma_0) E_0(\gamma_1)}{E_0(\gamma_{01})} = \frac{E_1( \gamma_0 + \gamma_1)}{ E_1(\gamma_{01} )  - T},\] where $T$ is the ``error term"
\[ T = E_{1}(\gamma_{01}) - E_{0}(\gamma_{01})=\sum_{i=p}^{2p-2} \frac{\gamma_{01}^i}{i!}. \]
\end{theorem}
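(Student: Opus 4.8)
The plan is to derive the formula for $B_q$ from \Cref{Gamma_from_c} together with the definition $B_q = \dsh(\Gamma_q)$, and then rewrite everything in terms of the exponential maps $E_0$ and $E_1$. First, I would recall from \Cref{sec:determineAction} that $B_q = \dsh(\Gamma_q)$, where $\dsh(u(\e)) = u(\e_0)u(\e_1)/u(\e_0\e_1)$. By \Cref{Gamma_from_c} we have $\Gamma_q = E_0(\gamma(\e))$ as an element of $\bar\Lambda_0^\times$, so substituting gives immediately
\[B_q = \dsh(E_0(\gamma(\e))) = \frac{E_0(\gamma(\e_0))\, E_0(\gamma(\e_1))}{E_0(\gamma(\e_0\e_1))} = \frac{E_0(\gamma_0)\,E_0(\gamma_1)}{E_0(\gamma_{01})}.\]
Strictly speaking one should check that substituting $\e_0$, $\e_1$, or $\e_0\e_1$ for $\e$ in $E_0(\gamma(\e))$ makes sense: each of $\gamma(\e_0)$, $\gamma(\e_1)$, $\gamma(\e_0\e_1)$ lies in the appropriate augmentation ideal $\langle y_0, y_1\rangle \subset \bar\Lambda_1$ (since $\gamma(\e)$ has zero constant term, by construction $\gamma(1) = \sum (c_i+c-F)/i - \sum c_i/i = (c-F)\sum 1/i$... in fact I should be careful here — $\gamma(1)$ need not vanish, so what is true is that $\gamma(\e_0\e_1)$ lies in the image of $y\bar\Lambda_0$ under $\e \mapsto \e_0\e_1$, hence in $\langle y_0,y_1\rangle$ only after noting $\e_0\e_1 - 1 = y_0 + y_1 + y_0y_1 \in \langle y_0,y_1\rangle$). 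The point is that $E_0$ applied to $\gamma$ evaluated at a group element gives a well-defined unit, and $\dsh$ is a homomorphism $\bar\Lambda_0^\times \to \bar\Lambda_1^\times$, so the first equality is essentially formal.

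For the numerator of the second expression, I would use the multiplicativity of $E_1$ from \Cref{lem:expfun}(1): since $\gamma_0 = \gamma(\e_0)$ involves only $y_0$ and $\gamma_1 = \gamma(\e_1)$ involves only $y_1$, and $E_1$ restricted to $y\bar\Lambda_0$ (in the variable $y_0$ alone, or $y_1$ alone) agrees with $E_0$ — because for a polynomial in a single $y_i$ the truncated sum $\sum_{i=0}^{2p-2} f^i/i!$ already has $f^i = 0$ for $i \geq p$ — we get $E_0(\gamma_0) = E_1(\gamma_0)$ and $E_0(\gamma_1) = E_1(\gamma_1)$, hence $E_0(\gamma_0)E_0(\gamma_1) = E_1(\gamma_0)E_1(\gamma_1) = E_1(\gamma_0 + \gamma_1)$. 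This is the key simplification and the one requiring the most care: one must verify that $\gamma_0$ and $\gamma_1$ genuinely lie in the single-variable subrings so that $E_0$ and $E_1$ coincide on them, and that $\gamma_0 + \gamma_1 \in \langle y_0, y_1\rangle$ so that $E_1(\gamma_0+\gamma_1)$ is defined. For the denominator, $\gamma_{01} = \gamma(\e_0\e_1)$ is a genuine two-variable element and $E_0(\gamma_{01})$ (the truncated sum to degree $p-1$) differs from $E_1(\gamma_{01})$ (the sum to degree $2p-2$) precisely by the tail $T = \sum_{i=p}^{2p-2} \gamma_{01}^i/i!$, which is the definition of the error term; so $E_0(\gamma_{01}) = E_1(\gamma_{01}) - T$, giving the second formula.

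The main obstacle I anticipate is bookkeeping around the two truncated exponentials and making sure the identities $E_0 = E_1$ on single-variable inputs and $E_0(\gamma_{01}) = E_1(\gamma_{01}) - T$ hold as stated in $\bar\Lambda_1$ (and that the final answer $B_q$ really descends to $\Lambda_1$, not just $\bar\Lambda_1$ — this last point is asserted in the surrounding text as "Although $\gamma$ has coefficients in $\Fpbar$, the resulting $B_q$ is indeed in $\Lambda_1$" and follows because $B_q = \dsh(\Gamma_q)$ is independent of the choice of root $F$ up to the ambiguity in $\Gamma_q$, which lies in $\ker\dsh$; alternatively it follows from Anderson's original results). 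A secondary subtlety is confirming that the formula is independent of which of the $p$ solutions $F, F+1, \dots, F+(p-1)$ one picks: changing $F$ to $F+1$ multiplies $\gamma(\e)$ by adding $-\sum_i \e^i/i + \text{const}$, i.e. multiplies $\Gamma$ by $E_0$ of something whose $\dsh$-image is trivial — this should be checked or cited. Apart from these points the proof is a direct substitution, and I would present it in that order: invoke $B_q = \dsh(\Gamma_q)$ and \Cref{Gamma_from_c} to get the $E_0$ formula, then apply \Cref{lem:expfun} and the single-variable coincidence $E_0 = E_1$ to rewrite the numerator, then read off the denominator relation from the definition of $T$.
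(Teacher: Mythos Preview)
Your proposal is correct and follows essentially the same route as the paper's proof: invoke $B_q = \dsh(\Gamma_q)$, substitute $\Gamma_q = E_0(\gamma)$ from \Cref{Gamma_from_c}, note that $E_0(\gamma_i) = E_1(\gamma_i)$ for $i=0,1$ because $\gamma_i^p = 0$, use the multiplicativity of $E_1$ for the numerator, and read off the denominator relation from the definition of $T$. One small point: your worry that $\gamma(1)$ might not vanish is unfounded---in fact $\gamma(1) = (c-F)\sum_{i=1}^{p-1} 1/i = 0$ in $\Fpbar$ since $\sum_{i=1}^{p-1} 1/i \equiv 0 \pmod p$, so $\gamma$ genuinely lies in $y\bar\Lambda_0$ as \Cref{Gamma_from_c} implicitly asserts, and your single-variable argument for $E_0 = E_1$ on $\gamma_0,\gamma_1$ goes through cleanly.
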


\begin{proof}
By \cite[Section 8.4]{Anderson}, $B = \Gamma(\e_0)\Gamma (\e_1)/ \Gamma(\e_0 \e_1)$ in $\Lambda_1$.
By \Cref{Gamma_from_c}, $\Gamma(\e) = E_{0} (\gamma(\e))$. 
If $i=0,1$, then $E_{0}(\gamma_i) = E_{1}(\gamma_i)$ since $\gamma_i^p = 0$. 
By \Cref{lem:expfun}, $\Gamma (\e_0)\Gamma (\e_1) = E_1(\gamma_0+\gamma_1)$.
Since $\gamma_{01}^p$ is not necessarily zero, the error term $T$ appears in the denominator.
\end{proof}

\begin{remark}\label{rem:Tideal}
The error term $T$ is in the ideal $\langle y_0,y_1\rangle^p$ since $\gamma_{01} \in \langle y_0,y_1\rangle$. 

In the atypical situation that $\gamma_{01}^p=0$, then $T=0$ and $B_q = E_1(\gamma_0 + \gamma_1 - \gamma_{01})$.\end{remark}

The next formula follows immediately from \cref{cor:Beta}.
\begin{equation} \label{Bqinverse}
B_{q^{-1}} = E_1(\gamma_{01} -\gamma_0 -\gamma_1) - E_1(-\gamma_0 -\gamma_1)T. 
\end{equation}

For better display in the next examples, let $x = \e_0 -1$ and $y=\e_1-1$. We arrived at the formulas using Magma; 
it is difficult to do these calculations by hand.

\begin{example}\label{ex:p3Bq}
Let $p=3$. Then $Q = \langle \tau_0, \tau_1 \rangle = (\Z/3)^2$.

If $q=\tau_0$, then $c_0=1$, $c_1=0$, and $c_2=1$; hence $c=1$. 
Let $F$ be a solution of $F^3-F+1=0$, so $f_0 = 1$, $f_1=1-F$, and $f_2 = 1+F$.  Then
\[\gamma_{\tau_0} = 1+(1-F)\e + (1+F)\e^2 = Fy + (1+F)y^2.\]

If $q=\tau_1$, then $c_0=0$ and $c_1 = c_2 =1$; hence $c=-1$. 
Let $F$ be a solution to $F^3-F-1=0$, so that $f_0=0$, $f_1 = -F$, and $f_2 = F$.  Then
\[\gamma_{\tau_1} = F(\e^2-\e) = F(y+y^2).\]
After a calculation, one obtains that
\[ B_{\tau_0} = 1+ xy + 2xy(x+y) \ {\rm and} \ B_{\tau_1} = 1+ 2xy(x+y) + x^2y^2. \]
\end{example}

\begin{example}
Let $p=5$; then $Q=\langle \tau_0, \tau_1, \tau_2\rangle \simeq (\ZZ/5)^3$, and we have:
\begin{eqnarray*}
B_{\tau_0} - 1 & = & 4x^4y^4 + x^4y^3 + 3x^4y^2 + 4x^4y + x^3y^4 + x^3y^3 + 2x^3y^2 + 4x^3y\\
& + & 3x^2y^4 +  2x^2y^3 + 3x^2y + 4xy^4 + 4xy^3 + 3xy^2;\\
B_{\tau_1} -1 & = & 2x^4y^4 + 2x^4y^3 + 4x^4y^2 + 4x^4y + 2x^3y^4 + 2x^3y^3 + 4x^3y^2 + x^3y\\
& + & 4x^2y^4 + 4x^2y^3 + x^2y^2 + 4x^2y + 4xy^4 + xy^3 + 4xy^2;\\
B_{\tau_2}-1 & = & 2x^4y^4 + 3x^4y^3 + 3x^4y^2 + 3x^3y^4 + 4x^3y^3 + 4x^3y^2 + 4x^3y\\
& + & 3x^2y^4 + 4x^2y^3 + 4x^2y^2 + x^2y + 4xy^3 + xy^2.
\end{eqnarray*}
\end{example}

\begin{example}
Let $p=7$; then $Q=\langle \tau_0, \tau_1, \tau_2, \tau_3 \rangle \simeq (\ZZ/7)^4$, and we have:
\begin{eqnarray*}
B_{\tau_0}-1 & = &  x^6y^5 + 3x^6y^4 + 2x^6y^3 + 2x^6y^2 + 6x^6y \\
&+& x^5y^6 + 2x^5y^5 +  x^5y^4 + 4x^5y^3 + 6x^5y \\
&+& 3x^4y^6 +  x^4y^5 + 5x^4y^4 + 2x^4y^2\\
& +& 2x^3y^6 + 4x^3y^5 + 4x^3y^2 + 4x^3y \\
&+& 2x^2y^6 + 2x^2y^4 + 4x^2y^3 + 4x^2y^2 + 3x^2y\\
& +& 6xy^6+ 6xy^5 + 4xy^3 + 3xy^2;\\
B_{\tau_1}-1 & = & 5x^6y^6 + 3x^6y^5 + 2x^6y^4 + 3x^6y^3 + 6x^6y^2 + 6x^6y\\
& + & 3x^5y^6 + 3x^5y^5 + 4x^5y^4 + 4x^5y^3 + 5x^5y^2 + x^5y\\
& + & 2x^4y^6 + 4x^4y^5 + x^4y^4 + 4x^4y^3 + 5x^4y^2 + 6x^4y\\
& + & 3x^3y^6 + 4x^3y^5 + 4x^3y^4 + 2x^3y^3 + 6x^3y^2 + x^3y\\
& + & 6x^2y^6 + 5x^2y^5 + 5x^2y^4 + 6x^2y^3 + x^2y^2 + 6x^2y\\
& + & 6xy^6 + xy^5 + 6xy^4 + xy^3 + 6xy^2;\\
B_{\tau_2}-1 & = & 2x^6y^6 + 6x^6y^5 + 5x^6y^4 + x^6y^3\\
&  + & 6x^5y^6 + x^5y^5 + 5x^5y^4 + 2x^5y^3 + 3x^5y^2 + 6x^5y \\
& + & 5x^4y^6 + 5x^4y^5 +  4x^4y^4 + 5x^4y^2 + 2x^4y\\ 
&+ & x^3y^6 + 2x^3y^5 + 3x^3y^3 + x^3y^2 + 4x^3y \\
& + & 3x^2y^5 + 5x^2y^4 + x^2y^3 + 4x^2y^2 + 3x^2y\\
& + & 6xy^5 + 2xy^4 + 4xy^3 + 3xy^2;\\
B_{\tau_3}-1&=&4x^6y^5 + 2x^6y^3 + 4x^6y^2 \\
& + & 4x^5y^6 + 4x^5y^5 + x^5y^4 + 6x^5y^3 + 3x^5y^2\\
& + & x^4y^5 + 4x^4y^4 + 5x^4y^3 + 4x^4y^2 + 6x^4y\\ 
& + & 2x^3y^6 + 6x^3y^5 + 5x^3y^4 + 2x^3y^3 + 2x^3y\\
& + & 4x^2y^6 + 3x^2y^5 + 4x^2y^4 + 2x^2y^2 + 5x^2y\\
& + & 6xy^4 + 2xy^3 + 5xy^2.
\end{eqnarray*}
\end{example}

\section{Norm equalities for general primes} \label{Snorm}

For $q \in Q$, consider the unit $B_q$ in $\Lambda_1 = \Z/p[\e_0,\e_1]/\langle \e_i^p-1\rangle$. 
Note that $B_q^p=1$ since $q$ has order $p$. In \Cref{SSnorm}, we strengthen this by proving that the norm 
\[ N_q:= 1+B_q+\dots +B_q^{p-1}\]
is zero, except in the special case that $p=3$ and $q$ does not fix $\zeta_9 \in L$.
In \Cref{application2}, we study the power of $B_q-1$ which trivializes $H(U; \ZZ/p)$.

Throughout this section, it is again more convenient to work with the nilpotent basis of $\Lambda_1$ given by $y_i=\e_i-1$, 
so that $\Lambda_1=\Aa[y_0,y_1]/\langle y_0^p,y_1^p\rangle$. 

\subsection{Vanishing norms} \label{SSnorm}

Before studying the norm of $B=B_q$, we need an auxiliary result.
Write \[\tilde \gamma = \gamma_0+\gamma_1 - \gamma_{01},\] 
where $\gamma$ is as defined in \eqref{Egamma}, 
$\gamma_i = \gamma(\e_i)$ for $i=0,1$, and $\gamma_{01}=\gamma(\e_0\e_1)$.  Note that $\tilde \gamma \in \langle y_0, y_1 \rangle$, since $\gamma \in \langle y \rangle \subset \bar\Lambda_0$.

\begin{proposition}\label{prop:tildeideal}
If $q \in Q$, then $\tilde \gamma$ is in the ideal $\langle y_0,y_1\rangle^2$. If $p \geq 5$, or if $p=3$ and $q$ fixes $\zeta_9 \in L$, then $\tilde \gamma$ is in $\langle y_0,y_1\rangle^3$. More precisely,
\begin{enumerate}
\item $\tilde \gamma = y_0 y_1 \eta$ for some $\eta \in \bar\Lambda_1$;
\item and $\tilde \gamma \equiv \alpha y_0 y_1 (y_0 + y_1)$ modulo $\langle y_0,y_1\rangle^4$, 
for some constant $\alpha \in \FF_p$, unless $p = 3$ and $q \not \in \langle \tau_1 \rangle$.
\end{enumerate}
\end{proposition}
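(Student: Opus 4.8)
The plan is to compute $\tilde\gamma = \gamma_0 + \gamma_1 - \gamma_{01}$ directly from the defining formula \eqref{Egamma}, working in the nilpotent basis $y_i = \e_i - 1$, and to extract its low-order terms. First I would observe that the constant term $-\sum_{i=1}^{p-1} c_i/i$ of $\gamma(\e)$ contributes $-1$ times that constant to each of $\gamma_0$, $\gamma_1$, $\gamma_{01}$, so it cancels in the alternating sum $\tilde\gamma$; thus $\tilde\gamma = \sum_{i=1}^{p-1} b_i(\e_0^i + \e_1^i - (\e_0\e_1)^i)$ where $b_i = (c_i + c - F)/i$. Writing $\e_j^i = (1+y_j)^i = \sum_{k\ge 0}\binom{i}{k} y_j^k$ and $(\e_0\e_1)^i = \sum_{k,\ell}\binom{i}{k}\binom{i}{\ell} y_0^k y_1^\ell$, the terms with $k=0$ (in both $y_0$ and $y_1$) cancel between $\e_0^i + \e_1^i$ and the $k=\ell=0$ part of $(\e_0\e_1)^i$, leaving only $-1$; similarly the pure-$y_0$ part of $(\e_0\e_1)^i$ exactly cancels $\e_0^i$ and the pure-$y_1$ part cancels $\e_1^i$. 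Hence only the mixed terms survive: $\tilde\gamma = -\sum_{i=1}^{p-1} b_i \sum_{k\ge 1,\ \ell\ge 1}\binom{i}{k}\binom{i}{\ell} y_0^k y_1^\ell$, which visibly lies in $\langle y_0 y_1\rangle = \langle y_0,y_1\rangle^2$, and moreover is divisible by $y_0 y_1$; this gives part (1) with $\eta = -\sum_i b_i \sum_{k,\ell\ge 1}\binom{i}{k}\binom{i}{\ell} y_0^{k-1} y_1^{\ell-1}$.

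For part (2) I would compute $\tilde\gamma$ modulo $\langle y_0,y_1\rangle^4$, i.e. keep only the $y_0 y_1$, $y_0^2 y_1$, and $y_0 y_1^2$ terms (any $y_0^a y_1^b$ with $a,b\ge 1$ and $a+b\ge 2$, and we want $a+b \le 3$). From the formula above, the $y_0 y_1$ coefficient is $-\sum_i b_i \binom{i}{1}^2 = -\sum_i i\, b_i = -\sum_i (c_i + c - F)$, the $y_0^2 y_1$ coefficient is $-\sum_i b_i \binom{i}{2}\binom{i}{1} = -\sum_i \tfrac{i(i-1)}{2}\,b_i$, and by symmetry the $y_0 y_1^2$ coefficient equals the $y_0^2 y_1$ coefficient; so modulo $\langle y_0,y_1\rangle^4$ we get $\tilde\gamma \equiv A\, y_0 y_1 + B\,(y_0^2 y_1 + y_0 y_1^2)$ for scalars $A, B \in \Fpbar$. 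To prove $\tilde\gamma \in \langle y_0,y_1\rangle^3$ I must show $A = 0$, and to get the stated normal form $\alpha y_0 y_1(y_0 + y_1)$ I take $\alpha = B$ (and must then check $\alpha \in \FF_p$, not just $\Fpbar$). The vanishing $A = -\sum_{i=1}^{p-1}(c_i + c - F) = 0$ is equivalent to $\sum_i c_i + (p-1)c - (p-1)F = 0$, i.e. $c + (-1)c - (-1)F = F - (F^p - F + c)\cdot(\text{something})$ — more carefully, $\sum_i c_i = c$ and $p-1 \equiv -1$, so $A = -(c - c + F) = -F$; this is where I expect a subtlety, since $F \ne 0$ in general. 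I would resolve this by re-examining the reduction: the claim $\tilde\gamma \in \langle y_0,y_1\rangle^3$ forces $A=0$, which must follow from the defining relation $F^p - F + c = 0$ together with $y^p = 0$ in $\bar\Lambda_0$, meaning the "right" representative $\gamma(\e)$ in $\bar\Lambda_0$ has the property that the coefficient of $y$ in $\gamma(\e_0) + \gamma(\e_1) - \gamma(\e_0\e_1)$ vanishes automatically — i.e. I should recompute using $y^{p-1} = \sum_{i=0}^{p-1}(y+1)^i$ as in the proof of \Cref{Gamma_from_c}, so that the apparent $-F\, y_0 y_1$ is absorbed by a $y_0^{p-1} y_1$-type correction when $p = 3$ (giving the genuine exception there) but vanishes outright for $p \ge 5$.

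The main obstacle, then, is correctly tracking the interaction between the "constant ambiguity" in $\Gamma$ (the $\e^j$ factors in $\Ker\dsh$, equivalently the freedom $F \mapsto F+1$) and the truncation $y^p = 0$: the coefficient $b_{p-1} = (c_{p-1} + c - F)/(p-1)$ multiplies $\binom{p-1}{k}\binom{p-1}{\ell} y_0^k y_1^\ell$, and for $p = 3$ the term $\binom{2}{1}\binom{2}{1} y_0 y_1 = 4 y_0 y_1 = y_0 y_1$ from $i = 2 = p-1$ does not benefit from any $y^p = 0$ collapse, which is exactly why the case $p = 3$, $q \notin \langle \tau_1\rangle$ (equivalently $c_0 = \kappa(\zeta)(q) \ne 0$, so that $F$ is genuinely a root of an irreducible cubic and $q$ does not fix $\zeta_9$) is excluded. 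So I would handle $p \ge 5$ and $p = 3$ separately: for $p \ge 5$ show $A = 0$ by a clean binomial identity using that $c_i$ for $i > r$ is given by \eqref{eq:cis} and $F^p = F - c$; for $p = 3$ compute $\tilde\gamma$ by hand from \Cref{ex:p3Bq}-style data and verify that $\tilde\gamma \in \langle y_0,y_1\rangle^3$ precisely when $c_0 = 0$, i.e. $q \in \langle\tau_1\rangle$. Finally, that $\alpha \in \FF_p$ follows because $B_q \in \Lambda_1$ (not merely $\bar\Lambda_1$) by Anderson's result quoted in \Cref{sec:determineAction}, and $\tilde\gamma$ determines $B_q$ via $E_1$, forcing its lowest surviving coefficient to be $\FF_p$-rational.
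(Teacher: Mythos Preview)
Your approach to part (1) is correct and matches the paper's: expand $\gamma(\e_0\e_1)$ and observe that the pure $y_0^k$ and $y_1^\ell$ terms cancel against $\gamma_0$ and $\gamma_1$, leaving only mixed monomials.

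Part (2), however, contains an arithmetic slip that sends you down the wrong path. You write the coefficient of $y_0y_1$ in $\tilde\gamma$ as
\[
A = -\sum_i b_i \binom{i}{1}^2 = -\sum_i i\, b_i = -\sum_i (c_i + c - F),
\]
but $\binom{i}{1}^2 = i^2$, not $i$. The correct coefficient is
\[
A = -\sum_{i=1}^{p-1} i^2 b_i = -\sum_{i=1}^{p-1} i(c_i + c - F) = -(c-F)\sum_{i=1}^{p-1} i \;-\; \sum_{i=1}^{p-1} i\,c_i = -\sum_{i=1}^{p-1} i\,c_i,
\]
since $\sum_{i=1}^{p-1} i \equiv 0 \bmod p$. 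In particular $F$ disappears entirely, and $A \in \FF_p$. All of your subsequent speculation about the ``constant ambiguity'' in $\Gamma$, absorbing $-F$ via $y^{p-1}$-corrections, and the role of $F^p - F + c = 0$ is unnecessary: the obstruction you are trying to dissolve is an artifact of the error.

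With the correct expression $A = -\sum_i i\,c_i$, the paper's argument is short and linear in $q$: if $c_0 = 0$ then $c_{p-i} = c_i$ by \eqref{eq:cis}, and pairing $i$ with $p-i$ gives $A = -\sum_{i=1}^{r} c_i\bigl(i + (p-i)\bigr) = 0$. If $c_0 = 1$ and all other $c_j = 0$ for $j \le r$, then $c_{p-j} = j$ and $A = \sum_{j=1}^{r} j^2 = r(r+1)(2r+1)/6$, which vanishes mod $p$ exactly when $p \ge 5$ (for $p=3$ one gets $A=1$, producing the exception). Linearity in $\vec{c}$ then handles the general case. The symmetry of $\tilde\gamma$ under $y_0 \leftrightarrow y_1$ gives the form $\alpha\, y_0y_1(y_0+y_1)$ once $A=0$, and the same kind of computation (carried out in \Cref{cor:BqinI3}) shows $\alpha = \sum_i c_i\binom{i}{2} \in \FF_p$ directly --- you do not need to appeal to $B_q \in \Lambda_1$.
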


\begin{proof}
For part (1), 
suppose $\gamma=\sum_{i=0}^{p-1} a_i y^i$. Then 
\[\tilde \gamma = \gamma(\e_0) + \gamma(\e_1) - \gamma(\e_0 \e_1) = \sum_{i=0}^{p-1} a_i(y_0^i+y_1^i) - \sum_{i=0}^{p-1} a_i(y_0+y_1+y_0y_1 )^i. \]
Consider the coefficient of $y_0^k$ (equivalently, $y_1^k$) in 
\[\gamma(\e_0 \e_1) =\sum_{i=0}^{p-1} a_i(y_0+y_1(1+y_0))^i = \sum_{i=0}^{p-1} \sum_{j=0}^i a_i \binom{i}{j}y_0^j y_1^{i-j}(1+y_0)^{i-j}  . \]
The monomial $y_0^k$ appears in this sum only when $i=j$, hence also $j=k$, and the coefficient is thus $a_j$. 
It follows that the coefficients of $y_0^k$ and $y_1^k$ in $\tilde \gamma$ are zero, so $\tilde \gamma$ is divisible by $y_0y_1$.

For part (2), note that 
$\tilde \gamma = y_0y_1 \eta$, for some $\eta \in \bar\Lambda_1$, by part (1). The constant coefficient $w$ of $\eta$ equals the coefficient of $y_0y_1$ in $-\gamma_{01}$.
Write 
\[\gamma = \sum_{i=0}^{p-1} f_i \epsilon^i = \sum_{i=0}^{p-1} f_i (y+1)^i;\]
then
\begin{equation}\label{Egamma01}
- \gamma_{01} = - \sum_{i=0}^{p-1} f_i (y_0+1)^i(y_1+1)^i,
\end{equation}
so it follows that
\[w= - \sum_{i=1}^{p-1} f_i i^2.\]
Since $f_i=\frac{c_i + c - F}{i}$, this simplifies to
\[w= - (c-F)  \sum_{i=1}^{p-1} i - \sum_{i=1}^{p-1} i c_i= - \sum_{i=1}^{p-1} i c_i .\]
In particular, this proves that the assignment $\vec{c} = (c_0,c_1,\dots, c_{p-1}) \to w$ is linear.  

Case 1: If $c_0=0$ (equivalently, if $q$ fixes $\zeta_{p^2}$), then $c_{p-i}=c_i$. In this case, $w= - \sum_{i=1}^{(p-1)/2} c_i (i + (p-i))=0$.

Case 2: Suppose $c_0=1$ and $c_i = 0$ for $1 \leq i \leq r=\frac{p-1}{2}$.  Then $c_{p-j}=j$ for $1 \leq j \leq r$.
So \[w= - \sum_{i=r+1}^{p-1} i (p-i) = - \sum_{j=1}^{r} (p-j)j =\sum_{j=1}^{r} j^2,\] and $w = r(r+1)(2r+1)/6$.
If $p \geq 5$, then this gives $w=0$.  

General case:   
Since $\vec{c} \to w$ is linear, the above two cases prove that $w=0$ for all $q$ when $p \geq 5$.
Finally, $\eta \equiv \alpha (y_0 + y_1)$ modulo $\langle y_0, y_1 \rangle^2$ since it is symmetric with respect to the involution switching $y_0$ and $y_1$.
\end{proof}

The following consequence of \Cref{prop:tildeideal} will be used in \cref{Sinvariant}.

\begin{corollary}\label{cor:BqinI3}
Suppose $p\geq 5$. 
\begin{enumerate}
\item Then $B_q -1$ is in the ideal $\langle y_0,y_1 \rangle^3$ for all $q \in Q$. 
In fact, for some constant $\alpha \in \FF_p$, there is a congruence 
$B_q-1 \equiv \alpha y_0y_1(y_0 + y_1)$ modulo $\langle y_0,y_1 \rangle^4$.

\item The coefficient $\alpha$ of $y_0^2 y_1^1$ in $B_q -1$ is non-zero for all $q \in Q$ not in a linear subspace of 
codimension $1$.
\end{enumerate}

\end{corollary}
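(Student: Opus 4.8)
The plan is to derive both parts of the corollary directly from \Cref{cor:Beta} together with \Cref{prop:tildeideal}, with part (1) being essentially a bookkeeping exercise and part (2) requiring an explicit computation of the coefficient $\alpha$ as a linear functional of $q$. For part (1), I would start from the formula $B_q = E_1(\gamma_0+\gamma_1)/(E_1(\gamma_{01}) - T)$ of \Cref{cor:Beta}, but it is cleaner to use the equivalent product expression $B_q = E_0(\gamma_0)E_0(\gamma_1)/E_0(\gamma_{01}) = E_1(\gamma_0+\gamma_1)E_1(-\gamma_{01})\cdot(\text{correction})$; more precisely, since $\gamma_i^p=0$ for $i=0,1$ we have $E_0(\gamma_i)=E_1(\gamma_i)$, so by \Cref{lem:expfun}, $E_0(\gamma_0)E_0(\gamma_1) = E_1(\gamma_0+\gamma_1)$, and modulo the subtlety that $\gamma_{01}^p$ need not vanish, $B_q$ agrees with $E_1(\gamma_0+\gamma_1-\gamma_{01}) = E_1(\tilde\gamma)$ up to an error term supported in $\langle y_0,y_1\rangle^p$ (this is exactly \Cref{rem:Tideal}, and for $p\geq 5$ that error lives in $\langle y_0,y_1\rangle^5 \subset \langle y_0,y_1\rangle^4$). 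Then $B_q - 1 = E_1(\tilde\gamma) - 1 \equiv \tilde\gamma \pmod{\langle y_0,y_1\rangle^{2\deg}}$, and since \Cref{prop:tildeideal}(2) gives $\tilde\gamma \equiv \alpha y_0y_1(y_0+y_1) \pmod{\langle y_0,y_1\rangle^4}$ with $\tilde\gamma \in \langle y_0,y_1\rangle^3$, the quadratic-and-higher terms $\tilde\gamma^2/2 + \cdots$ of $E_1(\tilde\gamma)-1$ lie in $\langle y_0,y_1\rangle^6$, so they do not interfere. This yields $B_q - 1 \in \langle y_0,y_1\rangle^3$ and $B_q - 1 \equiv \alpha y_0y_1(y_0+y_1) \pmod{\langle y_0,y_1\rangle^4}$, proving part (1).

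For part (2), the point is to identify the constant $\alpha$ explicitly and show it is a nonzero linear functional of $q$. From the computation in the proof of \Cref{prop:tildeideal}(2), the constant coefficient $w$ of $\eta$ (where $\tilde\gamma = y_0y_1\eta$) equals the coefficient of $y_0y_1$ in $-\gamma_{01}$, and this was computed to be $w = -\sum_{i=1}^{p-1} i c_i$. By the symmetry of $\tilde\gamma$ under $y_0 \leftrightarrow y_1$, the degree-$3$ part of $\tilde\gamma$ is forced to be a multiple of $y_0y_1(y_0+y_1)$; extracting that multiple from $-\gamma_{01} = -\sum f_i(y_0+1)^i(y_1+1)^i$ one finds that the coefficient $\alpha$ of $y_0^2y_1$ in $B_q-1$ is $-\sum_{i=1}^{p-1}\binom{i}{2} i\, f_i$ (or the analogous explicit binomial sum), and substituting $f_i = (c_i + c - F)/i$ as before makes this a concrete $\FF_p$-linear expression in $(c_0,\dots,c_{p-1})$, hence — via \eqref{eq:cis} expressing $c_i$ for $i>r$ in terms of $c_0,\dots,c_r$ — an $\FF_p$-linear functional $\ell$ on $Q \cong (\FF_p)^{r+1}$. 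The task is then to check $\ell \not\equiv 0$, which I would do exactly as in \Cref{prop:tildeideal}: evaluate $\ell$ on the basis elements $\tau_j$ of \Cref{def:Qbasis}. For instance on $\tau_1$ (where $c_0 = 0$, $c_1 = 1$, and $c_i = 0$ for other $i \leq r$, forcing $c_{p-1}=1$ and the rest zero) the sum collapses to a single explicit value, and one verifies it is nonzero mod $p$ for all $p \geq 5$; this alone shows $\ell \neq 0$. The vanishing locus of a nonzero linear functional is exactly a codimension-$1$ subspace, which is the assertion.

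The main obstacle I anticipate is the bookkeeping in part (2): correctly extracting the coefficient of $y_0^2 y_1^1$ from $-\gamma_{01}$ (and confirming that the error term $T$ and the quadratic terms of $E_1(\tilde\gamma)$ contribute nothing at this degree), and then showing the resulting binomial sum in the $c_i$, after folding in the relation \eqref{eq:cis}, is genuinely nonzero as a functional on $(\FF_p)^{r+1}$ rather than merely nonzero on $\FF_p^{p-1}$. The degree-counting part is routine once one is careful that $p \geq 5$ keeps the error term $T \in \langle y_0,y_1\rangle^p$ safely above degree $4$; the only real content is the single explicit evaluation $\ell(\tau_1) \neq 0$ (together with, if one wants $\alpha$ as a clean formula, simplifying the binomial sum using $\sum i\binom{i}{2} \equiv 0$-type identities mod $p$). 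I would present part (1) first in full, then do part (2) by reducing to the linearity already established in the proof of \Cref{prop:tildeideal} and a one-line check on a basis vector.
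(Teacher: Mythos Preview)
Your approach matches the paper's: reduce $B_q - 1$ to $\tilde\gamma$ modulo higher-order terms via \Cref{rem:Tideal} and the expansion of $E_1$ (the paper does this for $B_q^{-1}$ using \eqref{Bqinverse}, a cosmetic difference), then invoke \Cref{prop:tildeideal} for part (1); for part (2) extract $\alpha = \pm\sum_i (c_i + c - F)\binom{i}{2}$ from $\gamma_{01}$ and test non-vanishing at a basis vector (you choose $\tau_1$, giving $\alpha = \pm\binom{p-1}{2}\equiv \pm 1$; the paper uses $\tau_2$). One correction: the identity $\sum_{i=2}^{p-1}\binom{i}{2}\equiv 0 \pmod p$ that kills the $(c-F)$-term is \emph{not} optional polish as you suggest --- without it your expression for $\alpha$ lives in $\bar\FF_p$ and involves the Artin--Schreier root $F$, so you cannot yet assert that $q\mapsto\alpha$ is an $\FF_p$-linear functional on $Q$; the paper verifies this identity (via the pyramidal-number formula, or equivalently $\sum_{i=0}^{p-1}\binom{i}{2}=\binom{p}{3}$) before concluding linearity, and you should too.
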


\begin{proof}
It suffices to show the conclusions for $B_q^{-1} -1$.
By \eqref{Bqinverse},
\[B_q^{-1} = E_1(-\tilde\gamma) - E_1(-\gamma_0-\gamma_1)T.
\]
Now $T \in \langle y_0,y_1 \rangle^p$ by \cref{rem:Tideal} so $B_q^{-1} -1 \equiv E_1(-\tilde\gamma) -1$ modulo $\langle y_0,y_1 \rangle^p$.
Furthermore, $- \tilde{\gamma} \equiv \alpha y_0 y_1 (y_0 + y_1)$ modulo $\langle y_0,y_1 \rangle^4$ by \cref{prop:tildeideal}.
By definition, $E_1(f) = \sum_{i=0}^{2p-2} f^i/i!$.
Thus 
\[E_1(-\tilde\gamma)-1 = - \tilde{\gamma} + \tilde{\gamma}^2/2 + \cdots \equiv - \tilde \gamma \bmod \langle y_0,y_1 \rangle^8.\]
Thus $B_q^{-1} -1 \equiv \alpha y_0 y_1 (y_0 + y_1)$ modulo $\langle y_0,y_1 \rangle^4$, finishing item (1).

For item (2), recall that $-\tilde{\gamma} = \gamma_{01} - \gamma_0 - \gamma_1$.
Thus $\alpha$ is the coefficient of $y_0^2 y_1^1$ in $\gamma_{01}$, 
because $\gamma_0$ and $\gamma_1$ have no terms divisible by $y_0y_1$.
As in \eqref{Egamma}, $\gamma(\epsilon) = \sum_{i=1}^{p-1} f_i \epsilon^i$ where $f_i = (c_i + c - F)/i$.
By \eqref{Egamma01},
\[\gamma_{01} = \sum_{i=1}^{p-1} f_i (y_0+1)^i (y_1+1)^i =  
\sum_{i=1}^{p-1} f_i(1+i y_0 + \binom{i}{2} y_0^2 + \cdots)(1+i y_1 + \binom{i}{2} y_1^2 + \cdots).\]
So the coefficient $\alpha$ of $y_0^2y_1$ in $\gamma_{01}$ is 
\[\alpha = \sum_{i=2}^{p-1} f_i \binom{i}{2} i=  \sum_{i=2}^{p-1} (c_i + c - F)\binom{i}{2}.\]

The centered octagonal pyramid number formula is 
$\sum_{i=2}^{p-1} \binom{i}{2} = n(4n^2-1)/3$ where $n=(p-1)/2$.  Then $4n^2 -1 \equiv 0 \bmod p$, 
so $(c-F) \sum_{i=2}^{p-1} \binom{i}{2} \equiv 0 \bmod p$.
Thus \[\alpha =  \sum_{i=2}^{p-1} c_i \binom{i}{2}.\]
Item (2) follows since the coefficient $\alpha$ is linear in $\vec{c}$ and 
does not vanish when $c_2 = c_{p-2}=1$ and all other $c_i=0$.
\end{proof}

\begin{proposition} \label{Nsimplify}
Let $N_{q^{-1}}$ be the norm of $B_{q^{-1}}$ and $\tilde \gamma = \gamma_0+\gamma_1 - \gamma_{01}$.
Then 
\[N_{q^{-1}}= N_{E_1(-\tilde{\gamma})}:= \sum_{i=0}^{p-1} E_1(-\tilde\gamma)^i.\] 
\end{proposition}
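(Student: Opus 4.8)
The plan is to substitute the explicit formula \eqref{Bqinverse} for $B_{q^{-1}}$ into the norm and exploit that its ``error term'' lies very deep in the augmentation ideal. Write $I=\langle y_0,y_1\rangle$, put $A=E_1(-\tilde\gamma)$ and $S=E_1(-\gamma_0-\gamma_1)\,T$, so that \eqref{Bqinverse} reads $B_{q^{-1}}=A-S$. The starting point is the formal fact that $I^{2p-1}=0$ in $\Lambda_1$ (a monomial $y_0^a y_1^b$ with $a,b\le p-1$ has degree at most $2p-2$). By \Cref{rem:Tideal}, $T\in I^p$, and since $E_1(-\gamma_0-\gamma_1)$ is a unit by \Cref{lem:expfun}, also $S\in I^p$; in particular $S^2\in I^{2p}=0$. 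Hence the binomial theorem collapses to $(A-S)^i=A^i-iA^{i-1}S$ for each $i$, and summing over $0\le i\le p-1$ gives
\[
N_{q^{-1}}=\sum_{i=0}^{p-1}(A-S)^i=\sum_{i=0}^{p-1}A^i-\Bigl(\sum_{i=1}^{p-1}iA^{i-1}\Bigr)S=N_{E_1(-\tilde\gamma)}-\Sigma\, S,
\]
where $\Sigma:=\sum_{i=1}^{p-1}iA^{i-1}$. The proposition therefore reduces to the single claim that $\Sigma\,S=0$.

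To evaluate $\Sigma$, I would first record the identity $\sum_{i=0}^{p-1}x^i=(x-1)^{p-1}$ in $\FF_p[x]$, which follows from $(x-1)\sum_{i=0}^{p-1}x^i=x^p-1=(x-1)^p$ and the fact that $\FF_p[x]$ is an integral domain; differentiating, $\sum_{i=1}^{p-1}ix^{i-1}=(p-1)(x-1)^{p-2}=-(x-1)^{p-2}$, so evaluating at $x=A$ gives $\Sigma=-(A-1)^{p-2}$. By \Cref{prop:tildeideal}(1) we have $\tilde\gamma\in y_0y_1\bar\Lambda_1\subseteq I^2$, hence $A-1=E_1(-\tilde\gamma)-1\in I^2$ and $(A-1)^{p-2}\in I^{2(p-2)}$. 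Consequently $\Sigma\,S\in I^{2(p-2)}\cdot I^p=I^{3p-4}$, which vanishes because $3p-4\ge 2p-1$ for every prime $p\ge 3$. This gives $N_{q^{-1}}=N_{E_1(-\tilde\gamma)}=\sum_{i=0}^{p-1}E_1(-\tilde\gamma)^i$, as desired; note that only part (1) of \Cref{prop:tildeideal} is used, so no hypothesis beyond $p$ odd enters.

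I do not expect a real obstacle: once \eqref{Bqinverse}, \Cref{rem:Tideal}, and \Cref{prop:tildeideal}(1) are in hand, the argument is just bookkeeping with powers of the nilpotent ideal $I$. The only point that requires a little thought is the identity $\Sigma=-(A-1)^{p-2}$; should a more hands-on version be preferred, one can instead expand $A^{i-1}=\sum_j\binom{i-1}{j}(A-1)^j$ and use $\sum_{i=1}^{p-1}i\binom{i-1}{j}=(j+1)\binom{p}{j+2}$, which is divisible by $p$ unless $j=p-2$. Either way $\Sigma$ lands in $I^{2(p-2)}$, which is all that the argument needs.
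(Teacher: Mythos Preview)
Your proof is correct and follows the same overall scheme as the paper: expand $B_{q^{-1}}=E_1(-\tilde\gamma)-E_1(-\gamma_0-\gamma_1)T$ using $T^2=0$, separate off $N_{E_1(-\tilde\gamma)}$, and show the remaining ``error'' term lies in $I^{2p-1}=0$. The only difference is in how that last step is carried out. The paper factors the error as $\frac{T}{E_1(\gamma_{01})}\sum_{m=1}^{p-1} m\,E_1(-m\tilde\gamma)$, expands $E_1(-m\tilde\gamma)=\sum_t(-m\tilde\gamma)^t/t!$, and uses the power-sum vanishing $\sum_{m=1}^{p-1}m^{t+1}=0$ for $0\le t\le (p-3)/2$ together with $\tilde\gamma\in I^2$ to land in $I^{p-1}$. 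Your route via the polynomial identity $\sum_{i=1}^{p-1}ix^{i-1}=-(x-1)^{p-2}$ in $\FF_p[x]$ is a clean shortcut: it gives $\Sigma=-(A-1)^{p-2}\in I^{2(p-2)}$ in one stroke, yielding the slightly sharper bound $\Sigma S\in I^{3p-4}$ without any series expansion. Both arguments rely only on part (1) of \Cref{prop:tildeideal}, as you note.
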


\begin{proof}
By \eqref{Bqinverse}, $B_{q^{-1}} = E_1(\gamma_{01} -\gamma_0 -\gamma_1) - E_1(-\gamma_0 -\gamma_1)T$. 
By \Cref{rem:Tideal}, $T^2=0$. Therefore, using \cref{lem:expfun} repeatedly, we have
\begin{align*}
 N_{q^{-1}} & = \sum_{m=0}^{p-1} \left(E_1(-\tilde \gamma ) - E_1(-\gamma_0-\gamma_1)T\right)^m  \\
 &=  \sum_{m=0}^{p-1} \sum_{k=0}^{m} (-1)^k\binom{m}{k} E_1(-(m-k)\tilde\gamma) E_1(-k(\gamma_0+\gamma_1)) T^k \\
 &= \sum_{m=0}^{p-1}E_1(-m\tilde \gamma ) - \sum_{m=1}^{p-1} m E_1( (1-m)\tilde\gamma - \gamma_0 - \gamma_1 ) T\\
 &=N_{E_1(-\tilde\gamma)} - \frac{T}{E_1(\gamma_{01})} \sum_{m=1}^{p-1} m E_1(-m\tilde\gamma). 
\end{align*}

To finish the proof, it suffices to show that the second term in the sum is $0$ in $\Lambda_1$.
By \Cref{prop:tildeideal}, $\tilde \gamma \in \langle y_0,y_1\rangle^2$.
Since $T \in \langle y_0,y_1\rangle^p$, it suffices to show that
\[ S = S(\tilde{\gamma}) = \sum_{m=1}^{p-1} m E_1(-m \tilde \gamma)\]
is in the ideal $I=\langle y_0,y_1\rangle^{p-1}$.
By \cref{lem:expfun}(3),
\[ S = \sum_{m=1}^{p-1} \sum_{t=0}^{2p-2} (-1)^t \frac{m^{t+1}\tilde{\gamma}^t}{t!}. \]
If $t \geq \frac{p-1}{2}$, then $\tilde{\gamma}^t \in I$.
Thus, modulo $I$, 
\[S \equiv  \sum_{t=0}^{(p-3)/2}  (-1)^t \frac{\tilde{\gamma}^t}{t!} (\sum_{m=1}^{p-1} m^{t+1}).\]
However, $\sum_{m=1}^{p-1} m^{t+1}=0$ when $0 \leq t \leq (p-3)/2$.
\end{proof}

 \begin{lemma}\label{lem:Enorm}
Suppose $f \in \Lambda_1$ is in the ideal $\langle y_0,y_1\rangle$. Then
\[N_{E_1(f)}:=\sum_{i=0}^{p-1} E_1(f)^i = f^{p-1} - \frac{f^{2p-2}}{(2p-2)!}.\]
\end{lemma}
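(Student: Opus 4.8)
The plan is to reduce the norm $N_{E_1(f)}=\sum_{i=0}^{p-1}E_1(f)^i$ to a power-sum computation modulo $p$. First I would apply \cref{lem:expfun}(1) and induction on $i$ to get $E_1(f)^i=E_1(if)$, where $if$ denotes the $i$-fold sum of $f$ (which still lies in $\langle y_0,y_1\rangle$), so that $N_{E_1(f)}=\sum_{i=0}^{p-1}E_1(if)$. Next I would expand each summand by \cref{lem:expfun}(3): $E_1(if)=\sum_{j=0}^{2p-2}(if)^j/j!=\sum_{j=0}^{2p-2}i^j\,(f^j/j!)$, where each $f^j/j!$ is a genuine element of $\Lambda_1$ by \cref{L:nodenom} and the integer $i^j$ acts through its reduction mod $p$. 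Since both sums are finite, I can interchange them to obtain $N_{E_1(f)}=\sum_{j=0}^{2p-2}\bigl(\sum_{i=0}^{p-1}i^j\bigr)f^j/j!$ (using the convention $0^0=1$ for the $j=0$ term).

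The key classical input is then the evaluation of power sums of residues: $\sum_{i=0}^{p-1}i^j\equiv-1\bmod p$ when $j$ is a positive multiple of $p-1$, and $\equiv0\bmod p$ otherwise. I would include the short proof (for $j=0$ the sum is $p\equiv0$; for $j\ge1$ with $(p-1)\mid j$ each nonzero residue contributes $1$; for $(p-1)\nmid j$ a geometric series in a primitive root of $\Fp^\times$ vanishes). In the range $0\le j\le 2p-2$ the only positive multiples of $p-1$ are $j=p-1$ and $j=2p-2$, so only those two terms survive and $N_{E_1(f)}=-f^{p-1}/(p-1)!-f^{2p-2}/(2p-2)!$. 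Finally, Wilson's theorem $(p-1)!\equiv-1\bmod p$ turns $-1/(p-1)!$ into $1$, giving the claimed formula $f^{p-1}-f^{2p-2}/(2p-2)!$; the second term is left as written precisely because $(2p-2)!$ is divisible by $p$ and so cannot be inverted.

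The point requiring care — more a bookkeeping matter than a real obstacle — is the meaning of the factorials $j!$ for $p\le j\le 2p-2$. This is already settled by \cref{L:nodenom} and the proof of \cref{lem:expfun}(3): for $f\in\langle y_0,y_1\rangle$ every lift $\tilde f^j/j!$ lies in $\mathbb{W}[y_0,y_1]/\langle y_0^p,y_1^p\rangle$, so it reduces to a well-defined element of $\Lambda_1$, and the identity $(if)^j/j!=i^j\,(f^j/j!)$ then holds on the nose in $\Lambda_1$ with $i^j$ read mod $p$, which is exactly what legitimizes the interchange of summations. It is worth a one-line sanity check against small cases (e.g. $p=3$, where $f^{2p-2}=f^4$ frequently vanishes in $\Lambda_1$) to confirm the formula and the sign conventions.
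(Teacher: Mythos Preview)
Your proof is correct and essentially identical to the paper's: both use $E_1(f)^i=E_1(if)$ from \cref{lem:expfun}, expand via the truncated exponential, interchange the finite sums, and invoke the standard congruence $\sum_{i=1}^{p-1}i^m\equiv 0$ or $-1\pmod p$ together with Wilson's theorem. The only cosmetic difference is that the paper separates the $i=0$ summand $E_1(0)=1$ at the outset, whereas you absorb it via the convention $0^0=1$ and note that $\sum_{i=0}^{p-1}i^0=p\equiv 0$.
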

\begin{remark}
Even though it is not possible to divide by $p$, the expression $\frac{f^{2p-2}}{(2p-2)!}$ is well-defined for $f\in \langle y_0,y_1\rangle$. 
\end{remark}

\begin{proof}
By \Cref{lem:expfun},
\[
N_{E_1(f)} = \sum_{i=0}^{p-1} E_1(f)^i = \sum_{i=0}^{p-1} E_1(if) = 1+\sum_{i=1}^{p-1} \sum_{m=0}^{2p-2} \frac{i^mf^m}{m!}.\]
Thus 
\[N_f =1 + \sum_{m=0}^{2p-2} \frac{f^m}{m!}\Big( \sum_{i=1}^{p-1} i^m \Big).\]

Recall that, modulo $p$, $\sum_{i=1}^{p-1} i^m = 0$ unless 
$m \equiv 0 \bmod p-1$ in which case $\sum_{i=1}^{p-1} i^m = -1$.
Also $(p-1)! = -1 $.
Thus
\[N_{E_1(f)} = 1 -\Big(1 + \frac{f^{p-1}}{(p-1)!} +\frac{f^{2p-2}}{(2p-2)!}\Big) = f^{p-1} - \frac{f^{2p-2}}{(2p-2)!}.\]
\end{proof}

\begin{theorem} \label{Tnorm}
For any $q\in Q$, the norm $N_q$ of $B_q$ equals $\tilde\gamma^{p-1}$. 
In particular, $N_q=0$ for all $q \in Q$ if $p\geq 5$; when $p=3$, then $N_q=0$ if $q$ fixes $\zeta_9$.
\end{theorem}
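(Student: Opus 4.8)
The plan is to combine the two preparatory results proved just above, namely \Cref{Nsimplify} and \Cref{lem:Enorm}, and then read off the vanishing from the computation of $w$ in \Cref{prop:tildeideal}. The starting observation is that it suffices to compute $N_{q^{-1}}$ rather than $N_q$: since $q \mapsto q^{-1}$ is a bijection of $Q$, proving $N_{q^{-1}} = \tilde\gamma^{p-1}$ for all $q$ (with $\tilde\gamma = \tilde\gamma(q)$) is equivalent to the stated claim after relabeling, provided one checks that replacing $q$ by $q^{-1}$ negates the tuple $\vec c$ and hence negates $\tilde\gamma$, so $\tilde\gamma^{p-1}$ is unchanged because $p-1$ is even.

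First I would apply \Cref{Nsimplify} to get $N_{q^{-1}} = N_{E_1(-\tilde\gamma)} = \sum_{i=0}^{p-1} E_1(-\tilde\gamma)^i$. Then, since $\tilde\gamma \in \langle y_0,y_1\rangle$ by the remark preceding \Cref{prop:tildeideal}, I would apply \Cref{lem:Enorm} with $f = -\tilde\gamma$ to obtain
\[
N_{q^{-1}} = (-\tilde\gamma)^{p-1} - \frac{(-\tilde\gamma)^{2p-2}}{(2p-2)!} = \tilde\gamma^{p-1} - \frac{\tilde\gamma^{2p-2}}{(2p-2)!},
\]
using that $p-1$ and $2p-2$ are even. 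The second term vanishes: by \Cref{prop:tildeideal}(1), $\tilde\gamma \in \langle y_0,y_1\rangle^2$, so $\tilde\gamma^{p-1} \in \langle y_0,y_1\rangle^{2(p-1)}$, and since $y_0^p = y_1^p = 0$ in $\Lambda_1$, any monomial of degree $\geq 2p-1$ already dies once a bidegree $(p,*)$ or $(*,p)$ factor is forced — more carefully, $\tilde\gamma^{p-1}$ times $\tilde\gamma^{p-1}$ has every monomial of total degree $\geq 4p-4 > 2p-2$, and one argues as in the proof of \Cref{lem:Enorm} (or \Cref{L:nodenom}) that such a product is zero in $\Lambda_1$. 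Hence $N_{q^{-1}} = \tilde\gamma^{p-1}$, and by the symmetry remark above $N_q = \tilde\gamma^{p-1}$ as well.

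It remains to show $\tilde\gamma^{p-1} = 0$ in the two stated ranges. When $p \geq 5$, \Cref{prop:tildeideal} gives $\tilde\gamma \in \langle y_0,y_1\rangle^3$, so $\tilde\gamma^{p-1} \in \langle y_0,y_1\rangle^{3(p-1)}$; since $3(p-1) \geq 2p$ for $p \geq 2$, every monomial appearing is divisible by $y_0^a y_1^b$ with $a + b \geq 2p$, forcing $a \geq p$ or $b \geq p$, hence the monomial is zero. When $p = 3$ and $q$ fixes $\zeta_9$, the hypothesis $c_0 = 0$ puts us in Case 1 of the proof of \Cref{prop:tildeideal}(2), where again $\tilde\gamma \in \langle y_0,y_1\rangle^3$; since here $p-1 = 2$, we get $\tilde\gamma^2 \in \langle y_0,y_1\rangle^6$, and every monomial of total degree $6$ in two variables each nilpotent of order $3$ vanishes. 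The only place where care is needed — and the main (minor) obstacle — is tracking these nilpotency/degree bookkeeping arguments cleanly, i.e. verifying that the stated power of the augmentation ideal really does lie in the annihilator; this is exactly the kind of argument already carried out in \Cref{L:nodenom} and \Cref{lem:Enorm}, so it poses no real difficulty. The case $p = 3$ with $q \not\in \langle\tau_1\rangle$ is genuinely excluded: there $\tilde\gamma$ only lies in $\langle y_0,y_1\rangle^2$, so $\tilde\gamma^2$ need not vanish, matching the exception in the statement.
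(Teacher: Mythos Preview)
Your proof is correct and follows essentially the same route as the paper: apply \Cref{Nsimplify}, then \Cref{lem:Enorm}, then kill $\tilde\gamma^{2p-2}$ using \Cref{prop:tildeideal}(1), and finally kill $\tilde\gamma^{p-1}$ using \Cref{prop:tildeideal}(2). The only difference is in how you bridge $N_{q^{-1}}$ and $N_q$: you relabel $q \leftrightarrow q^{-1}$ and invoke $\tilde\gamma(q^{-1}) = -\tilde\gamma(q)$ (which requires tracking the choice of $F$), whereas the paper simply notes that the norm of $B_q$ equals the norm of $B_q^{-1} = B_{q^{-1}}$, since $B_q^p = 1$ means $\{B_q^i\}_{i=0}^{p-1} = \{B_q^{-i}\}_{i=0}^{p-1}$ as multisets. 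The paper's observation is cleaner and avoids the (correct but unnecessary) detour through the sign of $\tilde\gamma$.
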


\begin{proof}
The norm of $B_q$ equals the norm of $B_q^{-1}=B_{q^{-1}}$, which is $N_{q^{-1}}$. 
By \cref{Nsimplify}, $N_{q^{-1}}= N_{E_1(-\tilde{\gamma})}$, and by \Cref{lem:Enorm},
\[ N_{E_1(-\tilde{\gamma})}  = (-\tilde\gamma)^{p-1} - \frac{(-\tilde\gamma)^{2p-2}}{(2p-2)!}.\]
From \Cref{prop:tildeideal}, $\tilde\gamma^{2p-2}$ is in the ideal $\langle y_0,y_1\rangle^{2(2p-2)}$, hence zero. Moreover, by \Cref{prop:tildeideal}(2) if $p\geq 5$, or if $q$ fixes $\zeta_{p^2}$, then $\tilde\gamma^{p-1}=0$.
\end{proof}

\begin{example}
Let $p=3$, and $q=\tau_1$;
as seen in \cref{ex:p3Bq}, $\gamma_{\tau_1} = F(\e^2-\e)$, so
\[\tilde \gamma_{\tau_1} = F(\e_0^2 -\e_0 +\e_1^2 - \e_1 - \e_0^2\e_1^2 +\e_0\e_1 )= - y_0^2 y_1^2 + y_0y_1(y_0 + y_1).  \]
Thus $\tilde\gamma_{\tau_1} \in \langle y_0,y_1\rangle^3$
and $N_{\tau_1}=\tilde\gamma_{\tau_1}^2=0$.
\end{example}

\begin{example}
Let $p=3$, and $q=\tau_0$;
as seen in \cref{ex:p3Bq},
\[ \gamma_{\tau_0} = 1+(1-F)\e + (1+F)\e^2= Fy + (1+F)y^2.\]
This implies that
\[ \tilde\gamma_{\tau_0} = y_0y_1 + (1+F) y_0y_1 (y_0 + y_1 - y_0y_1),\]
showing that $N_{\tau_0} = \tilde\gamma_{\tau_0}^2 = y_0^2 y_1^2$, which is not zero.
\end{example}

\begin{example}
Let $p=5$. Then modulo $\langle y_0, y_1 \rangle^4$:
\[\tilde\gamma_{\tau_0 } \equiv 3y_0y_1(y_0 + y_1), \ \tilde\gamma_{\tau_1 } \equiv 4y_0y_1(y_0 + y_1), \ \tilde\gamma_{\tau_2 } \equiv y_0y_1(y_0 + y_1).\]
\end{example}

\subsection{A second application}

Let $(y_0y_1) \Lambda_1 = (\epsilon_0-1)(\epsilon_1-1) \Lambda_1$ denote the augmentation ideal.
By \cite[Proposition 6.2]{WINF:birs}, the homology $H_1(U; \ZZ/p)$
can be identified with $(y_0y_1) \Lambda_1 \beta$.
In \cite[9.6 and 10.5.2]{Anderson}, for each $q \in Q$, Anderson proves that
$B_q - 1 \in (y_0y_1) \Lambda_1$;
this implies that $H_1(U; \Aa)$ is trivialized by 
the product $\prod_{i=1}^{p-1} (B_{q_i} - 1)$ for any $q_1, \ldots, q_{p-1} \in Q$.
The improvement in \Cref{cor:BqinI3} allows us to show that in fact $H_1(U; \Aa)$ is trivialized 
by the product of only $s = \lfloor 2p/3 \rfloor$ such terms when $p \geq 5$.

\begin{corollary} \label{application2}
Let $p \geq 5$ and $s = \lfloor 2p/3 \rfloor$ and $s'= \lfloor (2p+1)/3 \rfloor$.
If $T \geq s$ (resp.\ $T \geq s'$) and $q_1, \ldots, q_T \in Q$, 
then $\prod_{i=1}^{T} (B_{q_i} - 1)$ trivializes $H_1(U; \ZZ/p)$ (resp.\ $H_1(U, Y; \ZZ/p)$).
\end{corollary}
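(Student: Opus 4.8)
The plan is to reduce the claim to a statement about the ideal $\langle y_0, y_1\rangle$ in $\Lambda_1$. By Corollary \ref{cor:BqinI3}(1), for each $q \in Q$ we have $B_q - 1 \in \langle y_0, y_1\rangle^3$. Hence for any $q_1, \dots, q_T \in Q$, the product $\prod_{i=1}^T (B_{q_i}-1)$ lies in $\langle y_0, y_1\rangle^{3T}$. The module $\Lambda_1 = \Fp[y_0,y_1]/\langle y_0^p, y_1^p\rangle$ is spanned by monomials $y_0^a y_1^b$ with $0 \le a, b \le p-1$, so the maximal degree of a nonzero monomial is $2p-2$, i.e., $\langle y_0, y_1\rangle^N = 0$ once $N \ge 2p-1$. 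Therefore $\prod_{i=1}^T(B_{q_i}-1) = 0$ in $\Lambda_1$ — hence it trivializes $H_1(U,Y;\Aa) = \Lambda_1 \beta$ — as soon as $3T \ge 2p-1$, that is, $T \ge (2p-1)/3$. Since $T$ is an integer, this holds exactly when $T \ge \lceil (2p-1)/3 \rceil = \lfloor (2p+1)/3\rfloor = s'$, giving the statement for $H_1(U,Y;\Aa)$.

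For the statement about $H_1(U;\Aa)$, I would use the identification $H_1(U;\Aa) = (y_0 y_1)\Lambda_1 \beta$ from \cite[Proposition 6.2]{WINF:birs}. A general element is $y_0 y_1 f \beta$ with $f \in \Lambda_1$, and applying $\prod_{i=1}^T (B_{q_i}-1)$ to it lands in $y_0 y_1 \langle y_0, y_1\rangle^{3T} \Lambda_1 \beta \subseteq \langle y_0, y_1\rangle^{3T+2}\beta$. This vanishes once $3T + 2 \ge 2p-1$, i.e., $3T \ge 2p-3$, i.e., $T \ge (2p-3)/3 = \tfrac{2p}{3} - 1$. The least integer $T$ with $3T \ge 2p-3$ is $\lceil (2p-3)/3\rceil$; since $2p \equiv 2$ or $1 \pmod 3$ for $p \ge 5$ prime, one checks $\lceil (2p-3)/3\rceil = \lfloor 2p/3\rfloor = s$, giving the claim for $H_1(U;\Aa)$.

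The only real content beyond bookkeeping is the input $B_q - 1 \in \langle y_0,y_1\rangle^3$, which is exactly Corollary \ref{cor:BqinI3}(1); everything else is the elementary observation that $\langle y_0, y_1\rangle^{2p-1}$ vanishes in the truncated polynomial ring $\Lambda_1$, together with the identifications of $H_1(U;\Aa)$ and $H_1(U,Y;\Aa)$ as modules over $\Lambda_1$. I expect the main (minor) obstacle to be verifying the two floor/ceiling identities $\lceil(2p-1)/3\rceil = \lfloor(2p+1)/3\rfloor$ and $\lceil(2p-3)/3\rceil = \lfloor 2p/3\rfloor$ for primes $p \ge 5$; these follow by splitting into the cases $p \equiv 1, 2 \pmod 3$ (note $p \equiv 0$ is impossible), so there is no genuine difficulty. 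One should also note that the argument is manifestly uniform in the choice of the $q_i$, including repetitions, and uses only the containment in $\langle y_0,y_1\rangle^3$ — not the finer congruence modulo $\langle y_0,y_1\rangle^4$ in Corollary \ref{cor:BqinI3}(1).
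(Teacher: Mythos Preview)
Your proof is correct and follows the same overall strategy as the paper: use \Cref{cor:BqinI3} to place $\prod_i (B_{q_i}-1)$ in a high power of the maximal ideal, then observe that high enough powers vanish in $\Lambda_1$. The execution differs slightly. The paper tracks the bidegree, using that every monomial of $B_q-1$ is divisible by $y_0^2y_1$ or $y_0y_1^2$ (combining \Cref{cor:BqinI3} with Anderson's $B_q-1\in\langle y_0y_1\rangle$), and then analyzes the ``least likely to vanish'' monomial $y_0^{2a+b}y_1^{a+2b}$. You instead track only total degree, using just $B_q-1\in\langle y_0,y_1\rangle^3$, and invoke the blanket vanishing $\langle y_0,y_1\rangle^{2p-1}=0$. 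Your route is cleaner: the extra bidegree information the paper carries does not improve the bound, so you lose nothing by discarding it, and the argument becomes a two-line computation. Your floor/ceiling verifications are also correct (and the identity $\lceil(2p-3)/3\rceil=\lfloor 2p/3\rfloor$ does genuinely require $p\not\equiv 0\pmod 3$, which holds since $p\ge 5$ is prime).
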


\begin{proof}
If $p \geq 5$, then  \Cref{cor:BqinI3} shows that each monomial in $B_q-1$ is a multiple of either
$y_0^2y_1$ or $y_0y_1^2$ or both.  After taking the product of $T$ such terms, 
each monomial is of the form 
$y_0^{2a+b}y_1^{a+2b} = y_0^Ty_1^Ty_0^ay_1^b$ for some $a,b \geq 0$ such that $a+b=T$.
The monomial which is least likely to be zero in $\Lambda_1$ is: 
$(y_0y_1)^{3T/2}$ when $T$ is even and $a=b=T/2$;
or $y_0^{(3T-1)/2} y_1^{(3T+1)/2}$ when $T$ is odd and $a=(T-1)/2$ and $b=(T+1)/2$ 
(or its permutation under the transposition of $y_0$ and $y_1$).
To trivialize $H_1(U; \ZZ/p)$, it suffices to trivialize $y_0y_1 \cdot \beta$, 
which is guaranteed when $3T/2 \geq p-1$ for $T$ even and when $(3T+1)/2 \geq p-1$ when $T$ is odd. 
The smallest such value is $s$.
To trivialize $H_1(U,Y; \ZZ/p)$ it suffices to trivialize 
$1 \cdot \beta$, which is guaranteed when $3T/2 \geq p$ for $T$ even 
and when $(3T+1)/2 \geq p$ when $T$ is odd. 
The smallest such value is $s'$.
\end{proof}

\section{The $Q$-invariants} \label{Sinvariant}

Let $M$ denote the homology group $H_1(U,Y; \Aa)$, which can be identified with $\Lambda_1$.  Under this identification, the homology group $H_1(U; \Aa)$ corresponds to the ideal $\langle (1-\e_0)(1-\e_1)\rangle$ \cite[Lemma 6.1]{WINF:birs}. 
Recall that $y_i=\e_i-1$.

The $Q$-invariants of $M$ are 
\[M^Q = \{m \in M \mid B_q m = m \ {\rm for \ all \ }q \in Q\}.\]
In \Cref{SQinvariant1}, we prove that ${\rm codim}(H_1(U)^Q, M^Q)=2$ for all odd $p$ and 
construct a subspace of $M^Q$ of dimension $2p+1$ for $p \geq 5$.
In \Cref{SQinvariant2}, we compare the $B_q$-invariant subspaces of $M$ for various $q \in Q$.

\subsection{A subspace of $M^Q$} \label{SQinvariant1}

For $0 \leq k \leq p-1$, define $\eta_k = \epsilon_1^{k} \sum_{i=0}^{p-1} \epsilon_0^i$
and $\gamma_k = \epsilon_0^k \sum_{i=0}^{p-1} \epsilon_1^i$.
Note that $(1-\epsilon_0) \eta_k = (1-\epsilon_1) \gamma_k=0$.

\begin{lemma}\label{lem:InvSubspace}
Let $L = \langle \eta_k, \gamma_k \rangle_{k=0}^{p-1}$, viewed as a  $\Aa$-subspace of $M$.  Then:
\begin{enumerate}
\item ${\dim}(L) = 2p-1$;
\item  ${\rm codim}(L \cap H_1(U), L) = 2$;
\item a basis for $L$ is $\{y_0^{i_0} y_1^{i_1} \mid \ {\rm at \ least \ one \ of} \ i_0, i_1 \ {\rm equals} \ p-1 \}$;
\item and $L \subset M^Q$.
\end{enumerate}
\end{lemma}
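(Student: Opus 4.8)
The plan is to establish the four claims essentially in the order (3), (1), (2), (4), since the basis description in (3) makes the first two dimension/codimension counts immediate and also streamlines the invariance check in (4). First I would prove (3): expand $\eta_k = \epsilon_1^k \sum_{i=0}^{p-1}\epsilon_0^i$ and $\gamma_k = \epsilon_0^k\sum_{i=0}^{p-1}\epsilon_1^i$ in the nilpotent basis $y_j = \epsilon_j - 1$. The key identity is $\sum_{i=0}^{p-1}\epsilon_0^i = \sum_{i=0}^{p-1}(1+y_0)^i = y_0^{p-1}$ in $\Lambda_1$ (this is exactly the computation already carried out in the proof of \Cref{Gamma_from_c}), so $\eta_k = (1+y_1)^k y_0^{p-1}$ and $\gamma_k = (1+y_0)^k y_1^{p-1}$. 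As $k$ ranges over $0,\dots,p-1$, the elements $(1+y_1)^k$ span all of $\Aa[y_1]/\langle y_1^p\rangle$ (a unitriangular change of basis from $\{y_1^k\}$), so the $\eta_k$ span $y_0^{p-1}\Lambda_1 = \langle y_0^{p-1}y_1^{i_1} : 0\le i_1\le p-1\rangle$, and similarly the $\gamma_k$ span $y_1^{p-1}\Lambda_1$. Hence $L$ is exactly the span of those monomials $y_0^{i_0}y_1^{i_1}$ with $i_0 = p-1$ or $i_1 = p-1$, proving (3).

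Claim (1) is then a count: there are $p$ monomials with $i_0 = p-1$, $p$ with $i_1 = p-1$, and the overlap is the single monomial $y_0^{p-1}y_1^{p-1}$, giving $\dim L = 2p-1$. For (2), recall $H_1(U;\Aa)$ corresponds to the ideal $\langle y_0 y_1\rangle = \langle (1-\epsilon_0)(1-\epsilon_1)\rangle$, whose monomial basis is $\{y_0^{i_0}y_1^{i_1} : i_0\ge 1, i_1\ge 1\}$; intersecting with the basis of $L$ from (3) leaves all monomials in $L$ except $y_0^{p-1}y_1^0$ and $y_0^0 y_1^{p-1}$, so $L\cap H_1(U)$ has codimension $2$ in $L$, giving (2).

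For (4) I would verify $B_q\,m = m$ for every generator $m \in \{\eta_k,\gamma_k\}$ and every $q\in Q$. By symmetry ($B_q$ is invariant under swapping $\epsilon_0,\epsilon_1$, and the roles of $\eta_k,\gamma_k$ are swapped), it suffices to treat the $\eta_k = (1+y_1)^k y_0^{p-1}$. Write $B_q = 1 + (B_q - 1)$; by \cite[9.6, 10.5.2]{Anderson} (or \Cref{cor:BqinI3}) we have $B_q - 1 \in \langle y_0 y_1\rangle$, in particular $B_q - 1$ is divisible by $y_0$, so $(B_q - 1)\cdot y_0^{p-1}$ is divisible by $y_0^p = 0$. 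Hence $(B_q-1)\eta_k = (1+y_1)^k (B_q-1) y_0^{p-1} = 0$, so $B_q \eta_k = \eta_k$. The analogous argument with $y_1^{p-1}$ handles $\gamma_k$, so $L\subseteq M^Q$.

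The only mild subtlety — the ``main obstacle'' such as it is — is making sure the identity $\sum_{i=0}^{p-1}\epsilon^i = y^{p-1}$ and the unitriangularity of $\{(1+y_1)^k\}_{k=0}^{p-1}$ are stated cleanly, since everything else follows formally; there is no deep content, only bookkeeping in the nilpotent basis, all of which is already available from earlier in the paper.
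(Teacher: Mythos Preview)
Your proposal is correct and essentially the same as the paper's proof: both hinge on the identity $\sum_{i=0}^{p-1}\epsilon^i = y^{p-1}$ to get $\eta_k=(1+y_1)^k y_0^{p-1}$, $\gamma_k=(1+y_0)^k y_1^{p-1}$, and both deduce (4) from $B_q-1\in\langle y_0y_1\rangle$. The only difference is organizational: the paper proves the parts in order (1)--(4), handling (1) by computing $\langle\eta_k\rangle\cap\langle\gamma_k\rangle$ directly and (2) by deriving two linear conditions on the coefficients $a_k,b_k$ in the $\eta,\gamma$-basis, whereas you front-load (3) and then read off (1) and (2) as monomial counts, which is a bit cleaner.
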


\begin{proof}
\begin{enumerate}
\item The elements $\eta_k$ for $0 \leq k \leq p-1$ generate a $\ZZ/p$-vector space of dimension $p$.
Similarly, $\gamma_k$ for $0 \leq k \leq p-1$ generate a $\ZZ/p$-vector space of dimension $p$.
The intersection $\langle \eta_k \rangle \cap \langle \gamma_k \rangle$ has dimension $1$ with basis 
$\sum_{k=0}^{p-1} \gamma_k = \sum_{k=0}^{p-1} \eta_k$.  Thus 
${\rm dim}(L) =2p-1$.
 
\item A basis for $L$ is given by $\eta_k$ for $0 \leq k \leq p-1$ and $\gamma_k$ for $0 \leq k \leq p-2$.
Write an element $\xi \in L$ in the form $\xi=A+B$ where $A = \sum_{k=0}^{p-1} a_k \eta_k$ and 
$B= \sum_{k=0}^{p-2} b_k \gamma_k$.

Since $A \in \langle 1-\epsilon_0 \rangle$, then $\xi \in \langle 1-\epsilon_0 \rangle$ if and only if 
$B \in \langle 1-\epsilon_0 \rangle$. 
Since $B=(\sum_{i=0}^{p-1} \epsilon_1^i) \sum_{k=0}^{p-2} b_k \epsilon_0^k$, this condition is satisfied if and only if (i) $\sum_{k=0}^{p-2} b_k = 0$.
Similarly, $B \in \langle 1-\epsilon_1 \rangle$, so $\xi \in \langle 1-\epsilon_1 \rangle$ if and only if 
$A \in \langle 1-\epsilon_1 \rangle$. This condition is satisfied if and only if (ii) $\sum_{k=0}^{p-1} a_k = 0$.
Since conditions (i) and (ii) are linearly independent, ${\rm codim}(L \cap H_1(U), L) = 2$.

\item This follows from the fact that $\eta_k = \epsilon_1^{k} \sum_{i=0}^{p-1} \epsilon_0^i = (y_1+1)^k y_0^{p-1}$
and $\gamma_k = \epsilon_0^k \sum_{i=0}^{p-1} \epsilon_1^i = (y_0+1)^k y_1^{p-1}$.

\item To show $L \subset M^Q$, it suffices to show that $(B_q-1) m = 0$ for each $m \in L$.
By part (3) and symmetry, it suffices to show that $(B_q-1) y_0^{i_0} y_1^{p-1} =0$.
This is true since $B_q-1 \in H_1(U) = \langle y_0 y_1 \rangle$ for all $q \in Q$, by \cref{cor:BqinI3}.
\end{enumerate}
\end{proof}

\begin{proposition} \label{Ph1umq}
If $p$ is odd, then $H_1(U)^Q$ has codimension $2$ in $M^Q$.
\end{proposition}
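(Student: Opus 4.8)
The plan is to show both inclusions of vector spaces at once by computing the codimension directly. Since $H_1(U)$ is identified with the ideal $\langle y_0 y_1 \rangle = \langle (1-\epsilon_0)(1-\epsilon_1)\rangle$ inside $M = \Lambda_1$, and $H_1(U)^Q = H_1(U) \cap M^Q$, what I want is that $M^Q$ is spanned by $H_1(U)^Q$ together with two extra vectors not lying in $H_1(U)$. A natural pair of such vectors is already visible in \Cref{lem:InvSubspace}: for instance $\eta_0 = \sum_{i=0}^{p-1}\epsilon_0^i = y_0^{p-1}$ and $\gamma_0 = \sum_{i=0}^{p-1}\epsilon_1^i = y_1^{p-1}$, both of which lie in $L \subset M^Q$ by that lemma but are not in $\langle y_0y_1\rangle$, and which are linearly independent modulo $H_1(U)$. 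So the only real content is the upper bound: every $Q$-invariant element is congruent, modulo $H_1(U)$, to a linear combination of $y_0^{p-1}$ and $y_1^{p-1}$.

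The key step is therefore to analyze $M^Q$ modulo $H_1(U) = \langle y_0y_1\rangle$. Working in the quotient ring $\Lambda_1/\langle y_0 y_1\rangle \cong \Fp[y_0,y_1]/\langle y_0^p, y_1^p, y_0y_1\rangle$, which as a $\Fp$-vector space has basis $1, y_0, y_0^2,\dots,y_0^{p-1}, y_1, y_1^2,\dots,y_1^{p-1}$ (dimension $2p-1$), I need to understand the action of each $B_q - 1$. Since $B_q - 1 \in \langle y_0y_1\rangle$ for every $q$ by \cite[9.6, 10.5.2]{Anderson} (or \Cref{cor:BqinI3}), multiplication by $B_q$ is the identity on this quotient — so that gives no constraint there, which means I actually have to lift the analysis one step and look at $(B_q - 1)m \in \langle y_0y_1\rangle$ as an honest element, not its image in the quotient. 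Concretely: for $m \in M^Q$ write $m = m' + m''$ where $m'$ is supported on monomials $y_0^a y_1^b$ with $a,b\geq 1$ (i.e. $m' \in \langle y_0y_1\rangle$, the $H_1(U)$ part) and $m''$ is supported on the "axis" monomials $1, y_0^a, y_1^b$. The invariance $(B_q-1)m = 0$ forces $(B_q - 1)m'' = -(B_q-1)m' \in \langle y_0 y_1\rangle^2$ since $B_q - 1 \in \langle y_0y_1\rangle$ and $m' \in \langle y_0y_1\rangle$. So the task reduces to: classify the axis-supported $m''$ for which $(B_q - 1)m'' \in \langle y_0y_1\rangle^2$ for all $q$, and show the space of such $m''$, together with the constant $1$ and with what can be absorbed into $m'$, collapses to something of the right dimension — more precisely, that $M^Q/H_1(U)^Q$ has dimension exactly $2$.

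The main obstacle will be extracting enough independent linear constraints on the coefficients of $m''$ from the family $\{(B_q-1)m'' \equiv 0 \bmod \langle y_0y_1\rangle^2\}_{q\in Q}$ to pin the dimension down to $2$ rather than merely bounding it. For this I would use the explicit leading term from \Cref{cor:BqinI3}: for $p\geq 5$, $B_q - 1 \equiv \alpha_q\, y_0y_1(y_0+y_1) \bmod \langle y_0,y_1\rangle^4$ with $\alpha_q$ a linear functional of $q$ that is not identically zero, so choosing $q$ with $\alpha_q \neq 0$ gives the constraint that multiplication by $y_0y_1(y_0+y_1)$ annihilates $m''$ modulo higher terms; iterating with the higher-order terms of $B_q - 1$ (and varying $q$) should successively kill the coefficients of $y_0^a$ and $y_1^b$ for $1 \le a,b \le p-2$, leaving only the constant $1$ and the top terms $y_0^{p-1}, y_1^{p-1}$ surviving modulo $H_1(U)$ — and the constant $1$ is itself $\epsilon_0^0\epsilon_1^0$, which is not $Q$-invariant unless it can be corrected by an element of $H_1(U)$, a point to check separately. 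The case $p=3$ must be handled by a direct finite computation using the explicit $B_{\tau_0}, B_{\tau_1}$ from \Cref{ex:p3Bq}. I expect the bookkeeping of which monomial coefficients get eliminated at which order to be the delicate part; a cleaner alternative, if available, is to note $L \cap H_1(U)$ has codimension $2$ in $L$ by \Cref{lem:InvSubspace}(2) and argue that $L$ already exhausts $M^Q$ modulo $H_1(U)$, i.e. every invariant is congruent mod $\langle y_0y_1\rangle$ to an element of $\langle \eta_k,\gamma_k\rangle$, which is the same reduction phrased more structurally.
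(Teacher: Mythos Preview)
Your approach is essentially the paper's: split $m\in M^Q$ into its axis part $m''=[m]_0$ and the $\langle y_0y_1\rangle$-part, and use invariance to force $(B_q-1)[m]_0\in\langle y_0y_1\rangle^2$. The paper's execution of the step you flag as delicate is sharper than you anticipate: one fixes a \emph{single} $q$ with $\alpha_q\neq 0$ and argues by minimal degree---if $\ell<p-1$ is the least index with $a_{\ell,0}$ or $a_{0,\ell}$ nonzero, then the coefficient of $y_0^{\ell+1}$ (resp.\ $y_1^{\ell+1}$) in the axis part of $(B_q-1)[m]_0/(y_0y_1)$ is exactly $\alpha_q\,a_{\ell,0}$ (resp.\ $\alpha_q\,a_{0,\ell}$), a contradiction; so no iteration over higher-order terms of $B_q-1$ and no variation of $q$ is needed, and the constant term is simply the $\ell=0$ case of this same argument rather than a separate check.
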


\begin{proof}
The result is true for $p=3$ by explicit computation.
If $p \geq 5$, then ${\rm codim}(H^1(U)^Q, M^Q) \geq 2$, since neither $y_0^{p-1}$ and $y_1^{p-1}$ are in $H_1(U)$,
but they are linearly independent in $M^Q$.
It suffices to show that the image of the map $\psi: M^Q \to \left( M/H_1(U) \right)^Q$ has dimension $2$.

Recall that $H_1(U) \simeq \langle y_0y_1 \rangle$.
We introduce some notation in order to filter $M$ by powers of $\langle y_0 y_1\rangle$.
Given $m \in M$, write $m = \sum_{0 \leq i,j \leq p-1} a_{i,j} y_0^i y_1^j$.
Let $[m]_k = \sum_{k={\rm min}\{i,j\}} a_{i,j} y_0^i y_1^j$.
For example, \[m_0=a_{0,0} + a_{1,0}y_0 + a_{0,1}y_1 + \cdots a_{p-1,0} y_0 + a_{0,p-1} y_1.\]
Then $m= \sum_{k = 0}^{p-1} [m]_i$ and $[m]_i \in \langle y_0 y_1\rangle^i - \langle y_0 y_1\rangle^{i+1}$.
The coset of $\psi(m)$ is represented by $[m]_0$.  
It suffices to show that ${\rm dim}(\{[m]_0 \mid m \in M^Q\})=2$. 

If $m \in M^Q$, then $(B_q-1)m=0$ for all $q \in Q$.  
This implies that $[(B_q-1)m]_1=0$.
Since $B_q - 1 \in \langle y_0 y_1 \rangle$, this implies that 
$[(B_q-1)[m]_0]_1=0$.  

We now isolate the term of lowest degree in $[m]_0$.
Let $\ell$ be minimal such that $a_{i,0}$ and $a_{0,j}$ are zero for all $i,j < \ell$.
By \Cref{cor:BqinI3}(1), $B_q-1 \equiv \alpha y_0y_1(y_0 + y_1)$ modulo $\langle y_0, y_1 \rangle^4$. In fact,
\[B_q-1 = y_0y_1 \sum_{h =1}^{p-2} b_h(y_0^h +y_1^h) \bmod \langle y_0y_1 \rangle^2 \]
for some coefficients $b_h$, where $b_1 \not = 0$ for at least one $q \in Q$ by
\Cref{cor:BqinI3}(2).
The condition $[(B_q-1)[m]_0]_1=0$ implies that 
\[ 
0  = [ [m]_0  \sum_{h= 1}^{p-2} b_h(y_0^h +y_1^h) ]_0 = \sum_{h =1}^{p-2} \sum_{q \geq \ell} b_h( a_{q,0} y_0^{h+q} + 
a_{0,q} y_1^{h+q} )  
\]
This shows that $\ell=p-1$ since $b_1 \not = 0$ and at least one of $a_{\ell,0}, a_{0,\ell}$ is non-zero.
\end{proof}

For $p \geq 5$, let $s_1 = y_0^{p-2} y_1^{p-2}$ and $a_1 = y_0^{p-3} y_1^{p-3}(y_0-y_1)$.

\begin{lemma}\label{lem:otherinvariants}
If $p \geq 5$, then $s_1, a_1 \in M^Q \cap H_1(U)$, so ${\rm dim}(M^Q) \geq 2p+1$
and ${\rm dim}(M^Q \cap H_1(U)) \geq 2p-1$.
\end{lemma}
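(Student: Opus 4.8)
The plan is to verify directly that $s_1$ and $a_1$ are annihilated by $B_q - 1$ for every $q \in Q$, which by \Cref{cor:BqinI3}(1) amounts to checking that multiplication by an element of $\langle y_0, y_1 \rangle^3$ kills each of them. First I would observe that $s_1 = y_0^{p-2}y_1^{p-2}$ and $a_1 = y_0^{p-3}y_1^{p-3}(y_0-y_1) = y_0^{p-2}y_1^{p-3} - y_0^{p-3}y_1^{p-2}$ both lie in $\langle y_0 y_1 \rangle = H_1(U)$, so once they are shown to be $Q$-invariant, they lie in $M^Q \cap H_1(U)$ as claimed. For the invariance, since $B_q - 1 \in \langle y_0, y_1 \rangle^3$, any product $(B_q-1) \cdot y_0^a y_1^b$ is a $\Aa$-linear combination of monomials $y_0^{a'}y_1^{b'}$ with $a' + b' \geq a + b + 3$; for $s_1$ we have $a + b = 2(p-2) = 2p-4$, so every resulting monomial has total degree at least $2p-1$, and since any monomial $y_0^{a'}y_1^{b'}$ with $a' \geq p$ or $b' \geq p$ vanishes in $\Lambda_1$, it suffices to note that $a' + b' \geq 2p-1$ forces $\max(a',b') \geq p$, hence $(B_q-1)s_1 = 0$. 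The same degree count applies to each of the two monomials in $a_1$, each of total degree $2p-5$, giving resulting monomials of total degree at least $2p-2$; here I need the slightly finer point that $B_q - 1 \equiv \alpha y_0 y_1(y_0+y_1) \bmod \langle y_0,y_1\rangle^4$, so the only terms of $(B_q-1) y_0^{p-2}y_1^{p-3}$ of total degree exactly $2p-2$ come from the degree-$3$ part $\alpha y_0 y_1(y_0 + y_1)$, yielding $\alpha(y_0^p y_1^{p-2} + y_0^{p-1}y_1^{p-1})$, and the first of these vanishes while the second is $\alpha y_0^{p-1}y_1^{p-1}$; doing the same for $y_0^{p-3}y_1^{p-2}$ gives $\alpha(y_0^{p-1}y_1^{p-1} + y_0^{p-2}y_1^p) = \alpha y_0^{p-1}y_1^{p-1}$, and these two contributions cancel in the difference $a_1$, so $(B_q - 1)a_1 = 0$ as well.

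With $s_1, a_1 \in M^Q \cap H_1(U)$ established, the dimension count follows by combining with \Cref{lem:InvSubspace}: the subspace $L$ sits inside $M^Q$ with $\dim L = 2p-1$, and $L \cap H_1(U)$ has dimension $2p-3$ by \Cref{lem:InvSubspace}(2). I claim $s_1$ and $a_1$ are not in $L + \langle s_1 \rangle$... more precisely, by \Cref{lem:InvSubspace}(3) the space $L$ is spanned by monomials $y_0^{i_0}y_1^{i_1}$ with $\max(i_0,i_1) = p-1$, so in terms of the monomial basis of $\Lambda_1$, $L$ contains no monomial with both exponents $\leq p-2$. Since $s_1 = y_0^{p-2}y_1^{p-2}$ has both exponents equal to $p-2$, and $a_1$ is a combination of $y_0^{p-2}y_1^{p-3}$ and $y_0^{p-3}y_1^{p-2}$ which again have both exponents $\leq p-2$, the elements $s_1$ and $a_1$ are linearly independent from $L$ and from each other (they involve disjoint monomials). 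Therefore $\dim M^Q \geq \dim L + 2 = 2p+1$ and $\dim(M^Q \cap H_1(U)) \geq \dim(L \cap H_1(U)) + 2 = 2p-1$.

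The main obstacle I anticipate is the cancellation argument for $a_1$: a crude degree bound alone only shows $(B_q-1)a_1$ lands in total degree $\geq 2p-2$, which is not automatically zero in $\Lambda_1$ (e.g.\ $y_0^{p-1}y_1^{p-1}$ is nonzero), so one genuinely has to track the degree-$(2p-2)$ part and exploit the antisymmetric structure of $a_1 = y_0^{p-3}y_1^{p-3}(y_0-y_1)$ together with the symmetry of $B_q - 1$ under exchanging $y_0 \leftrightarrow y_1$ — the $y_0 \leftrightarrow y_1$-symmetry of $B_q$ (noted in \Cref{sec:determineAction}) sends $a_1$ to $-a_1$, and it forces the surviving degree-$(2p-2)$ contributions from the two monomials to be equal, hence to cancel in the difference. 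Everything else is bookkeeping with the monomial basis of $\Lambda_1 = \Aa[y_0,y_1]/\langle y_0^p, y_1^p\rangle$.
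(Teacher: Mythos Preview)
Your proof is correct and follows essentially the same approach as the paper: both use \Cref{cor:BqinI3}(1) to reduce to checking that $s_1$ and $a_1$ are killed by $\alpha y_0y_1(y_0+y_1)$ modulo $\langle y_0,y_1\rangle^4$, then verify the resulting monomial identities in $\Lambda_1$. Your version is somewhat more detailed, in particular you spell out the linear independence of $s_1,a_1$ from $L$ via the monomial basis (a step the paper leaves implicit), and you give the cancellation for $a_1$ term-by-term rather than factoring as $y_0^{p-2}y_1^{p-2}(y_0^2-y_1^2)=0$, but the underlying argument is the same.
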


\begin{proof}
By \cref{cor:BqinI3}, if $p \geq 5$, then $B_q - 1 \equiv \alpha y_0y_1(y_0 + y_1) \bmod \langle y_0,y_1\rangle^4$, 
for some constant $\alpha\in \Fp$. The given elements $s_1$ and $a_1$ annihilate the ideal $ \langle y_0,y_1\rangle^4$; moreover, 
\[s_1 y_0y_1(y_0 + y_1) = y_0^{p-1}y_1^{p-1}(y_0 + y_1) = 0,\] and likewise
\[ a_1 y_0y_1(y_0 + y_1) = y_0^{p-2}y_1^{p-2}(y_0^2 + y_1^2) = 0.\]
\end{proof}

Here is some data about $M^Q$ when $p=3,5,7$.

\begin{example}
\[\begin{array}{c|c|c}
p & {\rm dim}(M^Q) & {\rm dim}(M^Q \cap H_1(U)) \\
\hline
3 & 5 & 3\\
5 & 11 & 9 \\
7 & 17 & 15
\end{array}\]
\end{example}



\begin{example} \label{Exdata}
\begin{enumerate}
\item When $p=3$, then $M^Q = L= \Ker(B_{\tau_0} -1) \subset \Ker(B_{\tau_1}-1)$.

\item When $p=5$, then $M^Q={\rm Span}(L, s_1,a_1)$
As an ideal, $M^Q$ is generated by $\eta_0=y_0^4$, $\gamma_0 = y_1^4$, and $a_1$.
Also, $\Ker(B_{\tau_i}-1)$ is the same $13$-dimensional subspace for $1 \leq i \leq 4$.

\item When $p=7$, then the set $\{s_1,a_1, s_2, a_2\}$ extends a basis of $L$ to a basis of $M^Q$, where
\[s_2= y_0^{3} y_1^{3} (y_0^2-y_0y_1 + y_1^2) + y_0^4y_1^5,\] 
\[a_2 = y_0^2y_1^2(y_0^3-y_0^2y_1+y_0y_1^2-y_1^3) + y_0^3y_1^4(y_0-2y_1) - y_0^4y_1^5.\]
Also, $\Ker(B_{\tau_i}-1)$ is the same $19$-dimensional subspace for $1 \leq i \leq 6$.
\end{enumerate}
\end{example}

\begin{remark}
We would be able to say more about $M^Q$ for $p \geq 11$ if the following question has a positive answer.
\end{remark}

\begin{question}\label{QuestInvariant}
Is it true that 
$\Ker (B_{\tau_i}-1) = \Ker(B_{\tau_j}-1) $ for all $1\leq i,j \leq r$? 
If yes, this would imply that $M^Q={\Ker}(B_{\tau_0}-1) \cap {\Ker}(B_{\tau_1}-1)$.
By \Cref{Exdata}, the answer is yes when $p=3,5,7$.
\end{question}

\subsection{A comparison of invariant subspaces for different automorphisms} \label{SQinvariant2}

Let $B_{i}=B_{\tau_i}$ where $\tau_1, \ldots, \tau_r$ are the 
chosen generators of $Q$.  Note that $(B_{ia})^a = B_{\tau_{ia}^a}$.
Let $\rho_a \in {\rm Aut}(M)$ be given by the permutation action 
$\epsilon_0^i \epsilon_1^j \mapsto \epsilon_0^{ia} \epsilon_1^{ja}$.

The following result does not answer the first part of \cref{QuestInvariant}, but still gives a relation between the kernels of various $(B_i -1)$.

\begin{lemma}\label{Biaa=rhoBi}
 Let $a \in (\ZZ/p)^*$.  Then $(B_{ia})^a = \rho_a(B_i)$ for $i\neq 0$ and $B_0 = \rho_a (B_0)$.
\end{lemma}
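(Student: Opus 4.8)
The plan is to trace through how the automorphism $\alpha_a$ of $L$ (sending $\zeta \mapsto \zeta^a$) transports the Anderson/Kummer data defining $B_q$, and then to match the result against the defining formula of \Cref{cor:Beta}. The key point is that the construction $q \mapsto B_q$ is natural with respect to the conjugation action of $\rGal{L}{\Q}$ on $Q$ from \Cref{GalLQ_semidirect_product}, once one accounts for the fact that $\alpha_a$ also acts nontrivially on the coefficient ring $\Lambda_1$ by $\epsilon_i \mapsto \epsilon_i^a$, which is exactly the map $\rho_a$. So I would first make precise the equivariance: for $g \in \rGal{L}{\Q}$ and $q \in Q$, one should have $B_{gqg^{-1}} = (g \cdot B_q) \cdot (\text{correction from the $\beta$-twist})$, where $g \cdot$ is the coefficient action. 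The cleanest route avoids spelling out this general naturality and instead works directly with the explicit formula.

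Concretely, I would argue as follows. By \Cref{GalLQ_semidirect_product}, $\alpha_a \tau_{ia} \alpha_a^{-1} = \psi(a)(\tau_{ia})^{\cdots}$ — more usefully, $\psi(a)\cdot \tau_i = (\tau_{ia})^a$ for $i \neq 0$, so $(\tau_{ia})^a = \psi(a)\cdot\tau_i = \alpha_a \tau_i \alpha_a^{-1}$. Thus I need to show $B_{\alpha_a \tau_i \alpha_a^{-1}} = \rho_a(B_{\tau_i})$. Now I use the description of $B_q$ via the coefficients $c_j = c_j(q)$ together with \eqref{eq:cis} and the formula in \Cref{cor:Beta}. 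Under conjugation by $\alpha_a$, the Kummer cocycle transforms by $\kappa(\theta)(\alpha_a q \alpha_a^{-1}) = \kappa(\alpha_a^{-1}\theta)(q)$ (standard), so one computes how the tuple $(c_0, \dots, c_{p-1})$ for $\tau_i$ compares with that for $\alpha_a\tau_i\alpha_a^{-1}$: the effect is a simultaneous rescaling/reindexing of the $c_j$ by the multiplication-by-$a$ map on indices, compatible with \eqref{eq:cis}. This reindexing is precisely what the substitution $\epsilon \mapsto \epsilon^a$ does to $\gamma(\epsilon) = \sum f_j \epsilon^j$, up to the harmless scalar ambiguity in the choice of $F$ (which only shifts $F$ by an element of $\Fp$ and does not change $B_q$). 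Feeding this through $E_0$ and the formula $B_q = E_0(\gamma_0)E_0(\gamma_1)/E_0(\gamma_{01})$ gives exactly $\rho_a$ applied to $B_{\tau_i}$, since $\rho_a$ is the ring automorphism $\epsilon_i \mapsto \epsilon_i^a$ and $E_0$ is defined by a universal power series. The case $i = 0$ is the special, easier one: $\psi(a)$ fixes $\tau_0$, and one checks $\rho_a(B_0) = B_0$ directly, either from the explicit $p=3,5,7$ data plus the general structure, or by noting that for $q = \tau_0^m$ the relevant Kummer class is $\kappa(\zeta)$, which is the one coordinate on which $\alpha_a$ acts trivially after accounting for the $\zeta \mapsto \zeta^a$ twist.

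The main obstacle I expect is bookkeeping the two competing actions of $a$: the ``outer'' twist on coefficients ($\rho_a$, i.e.\ $\epsilon_i \mapsto \epsilon_i^a$) versus the reindexing of the exponent $i$ in $\tau_i$ and the accompanying power $(\ )^a$. Getting the exact statement $(B_{ia})^a = \rho_a(B_i)$ rather than, say, $B_{ia} = \rho_a(B_i)$ or $(B_{ia})^{a} = \rho_{a^{-1}}(B_i)$, requires care: the exponent $a$ on the left comes from $\psi(a)\cdot\tau_i = (\tau_{ia})^a$, i.e.\ from the semidirect product structure, and I must verify it is consistent with how $B_{q^n} = B_q^n$ is not literally true (since $B$ is a crossed-homomorphism-type object) — in fact $B_{\tau^n} = (1 + \text{aug ideal})$-powers behave multiplicatively enough in the relevant filtration, or one uses that $\tau_{ia}^a$ has Kummer coordinates $a \cdot (\text{those of } \tau_{ia})$ and chases that through $\gamma$, which scales $f_j$ by $a$, matching the scaling induced by raising $\epsilon \mapsto \epsilon^a$ in the exponent. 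An alternative, possibly cleaner, strategy is to bypass the $c_j$-bookkeeping entirely: observe that both sides are units of $\Lambda_1$ in the image of $\dsh$, so by \cite[Theorems 7 \& 9]{Anderson} (and \cite[Proposition 5.1]{WINF:birs}) it suffices to check $\dlog$ of the corresponding $\Gamma$'s agree, and $\dlog\Gamma_q$ is linear in the $c_j(q)$ via \eqref{eq:dlogGamma}; then the claim reduces to the transformation law for $\dlog\Gamma$ under $q \mapsto \alpha_a q \alpha_a^{-1}$, which is a short direct computation. I would pursue whichever of these the authors have set up more machinery for; given the explicit-formula emphasis of \Cref{Sexplicit}, the $\gamma$-substitution route is likely intended.
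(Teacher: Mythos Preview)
Your approach is workable but takes a genuinely different route from the paper's. The paper never touches the explicit formula from \Cref{cor:Beta}; instead it gives a three-line ``compute two ways'' argument using only the module structure. Concretely: identify $a$ with its lift in $\rGal{L}{\Q}$ via the semidirect product of \Cref{GalLQ_semidirect_product}, and evaluate $a\cdot(B_i\beta)$ in two ways. On one hand $a\cdot(B_i\beta)=a\cdot(\tau_i\cdot\beta)=(a\tau_i)\cdot\beta=(a\tau_ia^{-1})\cdot(a\cdot\beta)$, and \Cref{GalLQ_semidirect_product} identifies $a\tau_ia^{-1}$ with $(\tau_{ia})^a$ (resp.\ $\tau_0$). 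On the other hand, because the $\Lambda_1$-action on $H_1(U,Y;\Aa)$ comes from the geometric $\mu_p\times\mu_p$-action and $a$ sends $\zeta\mapsto\zeta^a$, one has $a\cdot(B_i\beta)=\rho_a(B_i)(a\cdot\beta)$. Writing $a\cdot\beta=B'_a\beta$ for an invertible $B'_a$ and cancelling finishes the proof. This avoids all Kummer bookkeeping and all of \Cref{Sexplicit}.

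Your route via the explicit formula (especially the cleaner $\dlog\Gamma$ variant you mention at the end) can be made to work, but your write-up has a real point of confusion that would need fixing. You worry that ``$B_{q^n}=B_q^n$ is not literally true (since $B$ is a crossed-homomorphism-type object)''. In fact it \emph{is} literally true for $q\in Q$: since $Q\subset\Gal{K}$ fixes $\zeta$, each $q$ acts trivially on the coefficients of $\Lambda_1$, so from $q\cdot(f\beta)=f(q\cdot\beta)$ one gets $B_{q_1q_2}=B_{q_1}B_{q_2}$ on the nose, and in particular $B_{(\tau_{ia})^a}=(B_{\tau_{ia}})^a$. This is exactly what lets the paper cancel $B'_a$ at the end, and it removes the need for any filtration or ``multiplicative enough'' argument. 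Your Kummer transformation law also needs the extra factor of $a$ (one checks $\kappa(\theta)(\alpha_a q\alpha_a^{-1})=a\cdot\kappa(\alpha_a^{-1}\theta)(q)$, not just the reindexing), which is precisely what makes the $\dlog\Gamma$ computation match $\rho_a$ on the nose; you gesture at ``rescaling/reindexing'' but don't pin this down. Finally, for $i=0$ your appeal to ``explicit $p=3,5,7$ data plus the general structure'' is not a proof; the uniform argument (either the paper's, or your $\dlog\Gamma$ route applied with $q'=\alpha_a\tau_0\alpha_a^{-1}$) is what's needed.
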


\begin{proof}
By \cref{GalLQ_semidirect_product}, we may identify $a$ with an element of $\rGal{L}{\QQ}$. 
Then \[a \cdot(B_i \beta)  = a \cdot (\tau_i \cdot \beta) = (a \tau_i)\cdot\beta.\]

Consider $a \cdot (B_i \beta)$; recall that $B_i$ is an element of $\Lambda_1 = \Aa[\mu_p\times \mu_p]$, and the definition of the action of $\Lambda_1$ on $H_1(U,Y;\Aa)$ is via the map $\mu_p \times \mu_p \to {\rm Aut}(X)$ given by  $\e^i_0 \times \e^j_1: (x,y) \mapsto (\e^i_0 x, \e^j_1y).$ It follows that $a \cdot (B_i \beta) = \rho_a (B_i) (a \cdot \beta)$. 

On the other hand, note that $a \tau_i = (a \tau_i a^{-1}) a$. By \cref{GalLQ_semidirect_product}, we may identify $(a \tau_i a^{-1})$ with $(\tau_{ia})^a$ when $i \neq 0$, and with $\tau_0$ when $i=0$. 
Therefore, 
\[ \rho_a (B_i) (a \cdot \beta)= \begin{cases} (\tau_{ia})^a  \cdot (a \cdot \beta) = B_{ia}^a (a\cdot \beta)&\mbox{if } i \neq 0 \\ 
\tau_0 \cdot (a  \cdot\beta) = B_0  (a  \cdot\beta)  & \mbox{if } i =0 \end{cases}.\]
Because $H_1(U,Y;\Aa)$ is identified with the $\Lambda_1$-orbit of $\beta$, 
there exists an invertible $B'_a \in \Lambda_1$ such that $a \cdot \beta = B'_a \beta$. In the above identification, we can cancel this element and obtain
\[ \rho_a (B_i) = \begin{cases} (B_{ia})^a  &\mbox{if } i \neq 0 \\ 
B_0  & \mbox{if } i =0 \end{cases}.\]
\end{proof}

\begin{proposition} \label{Pkernel}
If $1 \leq i \leq r$ and $a \in (\ZZ/p)^*$, then ${\rm Ker}(\tau_{ai} -1) = \rho_a {\rm Ker}(\tau_i-1)$ is an equality of subsets of $H_1(U,Y;\Aa)$.
\end{proposition}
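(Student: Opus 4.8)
The plan is to deduce \Cref{Pkernel} directly from \Cref{Biaa=rhoBi}. The statement to prove is an equality of \emph{subsets} of $M=H_1(U,Y;\Aa)$, namely $\Ker(B_{ai}-1)=\rho_a\bigl(\Ker(B_i-1)\bigr)$, where I am reading ``$\Ker(\tau_i-1)$'' as ``$\Ker(B_{\tau_i}-1)$,'' i.e.\ the kernel of the $\Aa$-linear endomorphism of $M$ given by multiplication by $B_{\tau_i}-1$ (using the identification $M\cong\Lambda_1$ and the fact that $\rho_a$ is an $\Aa$-algebra automorphism of $\Lambda_1$).

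First I would record the elementary observation that for any unit $B\in\Lambda_1$, the endomorphism ``multiplication by $B$'' and ``multiplication by $B^a$'' have the same kernel, since $B$ is a unit: indeed $B^a$ is also a unit, so both maps are in fact bijective, hence both kernels are $\{0\}$; alternatively, without invoking that $B$ is a unit, one notes $B^a = B\cdot B^{a-1}$ with $B$ invertible, so $Bm=0\iff B^a m = 0$. Wait — the relevant operator is $B-1$, not $B$, so this needs a tiny bit more care: I want to relate $\Ker(B_{ai}-1)$ to $\Ker(B_{ai}^a - 1)$. Here $B_{ai}^p=1$, so $B_{ai}$ generates a cyclic group of order dividing $p$, and since $\gcd(a,p)=1$ the map $m\mapsto m$ intertwining powers gives $\Ker(B_{ai}-1)=\Ker(B_{ai}^a-1)$: if $B_{ai}m=m$ then clearly $B_{ai}^a m = m$; conversely if $B_{ai}^a m = m$, pick $a'$ with $aa'\equiv 1\bmod p$ and apply $(\cdot)^{a'}$ to get $B_{ai}^{aa'}m = B_{ai}m = m$ (using $B_{ai}^p=1$). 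So $\Ker(B_{ai}-1)=\Ker(B_{ai}^a-1)$.

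Next I apply \Cref{Biaa=rhoBi}: since $1\le i\le r$, in particular $i\ne 0$, we have $(B_{ia})^a=\rho_a(B_i)$, whence $\Ker(B_{ia}^a-1)=\Ker(\rho_a(B_i)-1)$. Finally, because $\rho_a$ is a ring automorphism of $\Lambda_1$ fixing $1$, it is in particular an $\Aa$-linear bijection of $M$ with $\rho_a\circ(\text{mult.\ by }B_i-1) = (\text{mult.\ by }\rho_a(B_i)-1)\circ\rho_a$; hence $\rho_a$ carries $\Ker(B_i-1)$ isomorphically onto $\Ker(\rho_a(B_i)-1)$. Chaining these three equalities gives $\Ker(B_{ai}-1)=\Ker(B_{ia}^a-1)=\Ker(\rho_a(B_i)-1)=\rho_a\bigl(\Ker(B_i-1)\bigr)$, which is exactly the claim.

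I do not anticipate a genuine obstacle here — the proposition is essentially a formal corollary of \Cref{Biaa=rhoBi} together with the fact that a ring automorphism intertwines the corresponding multiplication operators. The only point requiring the slightest attention is the one flagged above: passing between $\Ker(B_{ai}-1)$ and $\Ker(B_{ai}^a-1)$, which is legitimate precisely because $B_{ai}$ has order dividing the prime $p$ and $a$ is prime to $p$; it would be worth stating this as a one-line lemma or inlining it so the reader is not left to wonder why $B^a-1$ and $B-1$ have the same kernel (which is false for general $B$, but true here). If instead one prefers to avoid even that, one can restate \Cref{Biaa=rhoBi} in the equivalent form $\Ker(B_{ia}-1)=\rho_a\Ker(B_i-1)$ after first noting $\Ker(B-1)=\Ker(B^a-1)$ for $p$-torsion units $B$, and then the present proof is two lines.
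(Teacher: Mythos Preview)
Your proof is correct and slightly more direct than the paper's. Both arguments rely on \Cref{Biaa=rhoBi} and the fact that $\rho_a$ is a ring automorphism, so $\Ker(\rho_a(B_i)-1)=\rho_a\Ker(B_i-1)$. The difference lies in how each handles the passage between $\Ker(B_{ai}-1)$ and $\Ker(B_{ai}^a-1)$. The paper only observes the one-sided inclusion $\Ker(B_{ai}-1)\subseteq\Ker(B_{ai}^a-1)$ from the factorization $B^a-1=(B^{a-1}+\cdots+1)(B-1)$, which yields $\Ker(B_{ai}-1)\subseteq\rho_a\Ker(B_i-1)$; it then iterates this inclusion $p-1$ times, using $a^{p-1}\equiv1\bmod p$ so that $\rho_a^{p-1}=\mathrm{id}$, to force all inclusions in the resulting chain to be equalities. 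You instead invoke $B_{ai}^p=1$ (stated at the start of \Cref{Snorm}) together with $\gcd(a,p)=1$ to get the equality $\Ker(B_{ai}-1)=\Ker(B_{ai}^a-1)$ in one step, making the iteration unnecessary. Your route is cleaner; the paper's route has the minor feature that it never explicitly appeals to $B_q^p=1$, only to the finite order of $a$ in $(\ZZ/p)^*$.
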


\begin{proof}
Since $((B_{ia})^a - 1) = ( B_{ai}^{a-1} \ldots +B_{ai}^2+B_{ai}+ 1)(B_{ai} - 1)$, it follows that 
\[ {\rm Ker}(B_{ai} - 1) \subseteq {\rm Ker}(B_{ai}^a - 1).\] 
By \cref{Biaa=rhoBi}, $ {\rm Ker}(B_{ai}^a - 1) = \rho_a  {\rm Ker}(B_i -1).$ Thus 
\[{\rm Ker}(B_{ai} - 1) \subseteq \rho_a {\rm Ker}(B_i -1),\] 
and it follows that 
\[{\rm Ker}(B_{i} - 1) \subseteq \rho_a {\rm Ker}(B_{a^{-1}i} -1).\]
Applying this equality repeatedly, we conclude 
\[{\rm Ker}(B_{i} - 1) \subseteq \rho_a {\rm Ker}(B_{a^{-1}i} -1) \subseteq  \rho_a^2 {\rm Ker}(B_{a^{-2}i} -1) \subseteq \ldots \subseteq (\rho_a)^j  {\rm Ker}(B_{a^{-j}i} -1)\] 
for any $j=1,2,\ldots$. Since $a^{p-1} = 1$ mod $p$, taking $j=p-1$ allows one to conclude that all of the inclusions are equalities.  Thus \[{\rm Ker}(B_{ai} - 1) = \rho_a {\rm Ker}(B_i -1).\]
\end{proof}

\section{Galois cohomology calculations}\label{SGalCohomology}

The goal of this section is to give a method for the efficient computation of the first cohomology group $H^1(G, M)$, where $M $ is the homology group $H_1(U,Y; \Aa)$, and $G$ is the Galois group of a suitable extension of $L$ over the cyclotomic field $K=\Q(\zeta)$. In future applications, the extension of $L$ will be its maximal extension ramified only over $p$, or various subextensions of it. As it is difficult to know explicitly the structure of such a group $G$ in general, the direct description of $H^1(G,M)$ in terms of crossed homomorphisms will not give an effective method for computation.

More generally, consider an extension of finite\footnote{everything in this section works for profinite groups and continuous cohomology as well} groups
\[
1 \to N \to G \to Q \to 1,
\]
and a $G$-module $M$. We are interested in determining the first cohomology group $H^1(G,M)$. The Lyndon-Hochschild-Serre spectral sequence gives rise to a long exact sequence
\[ 0 \to H^1(Q,M^N) \xrightarrow{inf} H^1(G,M) \xrightarrow{res} H^1(N, M)^Q \xrightarrow{d_2} H^2(Q,M^N) \to \dots \]
in which the differential $d_2$ can be identified with the transgression map \cite[2.4.3]{Neukirch-Schmidt-Wingberg}, and explicitly constructed as such.
Thus the computation of $H^1(G,M)$ reduces to a computation of $H^1(Q,M^N)$, the kernel of the transgression differential $d_2$, 
and the extension formed from those two.

We restrict our attention to the case when the normal subgroup $N$ acts trivially on the module $M$, since our intended application satisfies that assumption.

\subsection{The transgression}

To begin, note that the extension $G$ is determined by its factor set $\omega: Q \times Q \to N$ \cite[6.6.5]{Weibel}. Explicitly, let $s:Q \to G$ be an arbitrary set-theoretic section of the projection $G\to Q$, such that $s(1)=1$. Then the map
\begin{align}
\omega(q_1,q_2) = s(q_1)s(q_2)s(q_1q_2)^{-1},
\end{align}
is a cocycle, which is independent of the choice of section $s$ when viewed as an element of $H^2(Q,N)$ \cite[6.6.3]{Weibel} or \cite[IV.3]{Brown}.

The next proposition is similar to some material in \cite[Section 1]{sharifi}.

\begin{proposition}\label{prop:kerd2v1}
Let $G$ be an extension of $Q$ by $N$ determined by the factor set $\omega$, and let $M$ be a $G$-module on which $N$ acts trivially. Then the transgression
\[ d_2: H^1(N,M)^Q \to H^2(Q,M) \]
is given by
\[ d_2(\phi) = - \phi\circ \omega.\]
\end{proposition}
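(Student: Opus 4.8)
The plan is to compute the transgression directly from the Lyndon--Hochschild--Serre spectral sequence of the extension $1 \to N \to G \to Q \to 1$ by tracking an explicit cocycle through the bar/standard resolution, using the section $s : Q \to G$ that defines the factor set $\omega$. First I would recall that since $N$ acts trivially on $M$, we have $M^N = M$ and $H^1(N,M) = \Hom(N,M)$, with $Q$ acting on this via conjugation in $G$ (and via the $Q$-action on $M$). So a class in $H^1(N,M)^Q$ is represented by a $Q$-equivariant homomorphism $\phi : N \to M$; this is exactly the hypothesis in the proposition.

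The main computation is the standard ``zig-zag'' that defines the transgression on the $E_2$ page. I would take the crossed homomorphism (actually honest homomorphism, since $N$ acts trivially on $M$) $\phi : N \to M$, view it as a $1$-cochain for $N$ with values in $M$, inflate/lift it to a $1$-cochain $\tilde\phi$ on $G$ by setting $\tilde\phi(g) = \phi(g \cdot s(\bar g)^{-1})$ where $\bar g$ is the image of $g$ in $Q$ — i.e. use the section $s$ to write every element of $G$ as $n \cdot s(q)$ and evaluate $\phi$ on the $N$-part. Then I would compute the coboundary $d\tilde\phi$ as a $2$-cochain on $G$; because $\phi$ is a cocycle on $N$ and is $Q$-equivariant, $d\tilde\phi$ will be inflated from a $2$-cochain on $Q$, and that $2$-cochain, by definition of the transgression, represents $d_2(\phi)$. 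Carrying out $d\tilde\phi(g_1,g_2) = g_1 \tilde\phi(g_2) - \tilde\phi(g_1 g_2) + \tilde\phi(g_1)$ on lifted elements $g_i = s(q_i)$ and using $s(q_1)s(q_2) = \omega(q_1,q_2) s(q_1 q_2)$, the terms involving $\omega$ collapse, leaving precisely $-\phi(\omega(q_1,q_2))$ — up to the usual sign ambiguity, which I would pin down by fixing conventions consistent with \cite[2.4.3]{Neukirch-Schmidt-Wingberg}. This yields $d_2(\phi) = -\phi \circ \omega$ as a class in $H^2(Q,M)$.

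I expect the main obstacle to be bookkeeping rather than conceptual: getting the signs and the direction of the various identifications exactly right (the identification $H^1(N,M)^Q \cong \Hom_Q(N,M)$, the precise form of the $Q$-action on $\phi$, and the sign in the transgression map as opposed to the differential $d_2$), and confirming that the $2$-cochain on $G$ produced by $d\tilde\phi$ is genuinely pulled back from $Q$ (which requires that $\phi(n^{s(q)}) = q \cdot \phi(n)$, i.e. the $Q$-invariance of $\phi$, exactly at the step where it is needed). A secondary point is to make sure the argument does not secretly use that $Q$ is elementary abelian or that $\omega$ has any special form — the proposition is stated for a general extension, so the proof must be the general spectral-sequence computation; the abelian structure of $Q$ only enters later, in comparing this with the explicit resolution used for \Cref{thm:d2Explicit}. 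I would close by remarking that this recovers the statement that $d_2$ factors through the map $\Hom_Q(N,M) \to H^2(Q,M)$ induced by $\omega \in H^2(Q,N)$ and the coefficient pairing $\Hom(N,M) \otimes N \to M$, which is the form most convenient for the applications.
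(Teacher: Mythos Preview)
Your proposal is correct and follows essentially the same approach as the paper: both extend $\phi$ to a $1$-cochain $\tilde\phi$ on $G$ via the section $s$ by setting $\tilde\phi(n\,s(q)) = \phi(n)$, then evaluate the relevant coboundary expression on $(s(q_1),s(q_2))$ and use $s(q_1)s(q_2) = \omega(q_1,q_2)s(q_1q_2)$ to obtain $-\phi(\omega(q_1,q_2))$. The only real difference is packaging: the paper cites \cite[2.4.3, 1.6.6]{Neukirch-Schmidt-Wingberg} and \cite[3.7]{Koch} for the explicit transgression formula and then plugs in, whereas you plan to derive that formula from the spectral-sequence zig-zag yourself; the core computation is identical.
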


\begin{proof}
By \cite[2.4.3]{Neukirch-Schmidt-Wingberg}, the transgression in the Hochschild-Serre spectral sequence is given by \cite[1.6.6]{Neukirch-Schmidt-Wingberg}. By \cite[3.7 (3.9) and (3.10)]{Koch}, the map defined to be the transgression given in \cite[3.7]{Koch} coincides with the map given by \cite[1.6.6]{Neukirch-Schmidt-Wingberg}.

We may thus use the description of the transgression given in  \cite[3.7]{Koch}. Given $\phi: N\to M$ which represents an element in $H^1(N,M)^Q$, we construct an extension $\tilde \phi :G \to M$ as prescribed by \cite[3.7]{Koch}: Fix the same section $s:Q \to G$ as in 
the definition of the factor set $\omega$.
Since $N$ acts trivially on $M$, we can choose $\tilde \phi (s(q)) = 0$, for any $q \in Q$. Any element $g\in G$ can be written as $g = n s(q)$, with $n\in N, q\in Q$; for this $g$ we define $\tilde \phi(g) = \phi(n)$. The transgression $d_2 \phi :Q\times Q \to M $ is then given by
\[ d_2\phi (q_1,q_2) = \tilde \phi(s(q_1)) +s(q_1) \tilde\phi(s(q_2)) - \tilde \phi(s(q_1)s(q_2)) =  -  \tilde \phi(s(q_1)s(q_2)), \]
as the first two terms are both zero.
Now note that 
\[ s(q_1)s(q_2) = s(q_1)s(q_2) s(q_1q_2)^{-1} s(q_1q_2 ) = \omega(q_1,q_2) s(q_1q_2); \]
since $\omega(q_1,q_2) $ is in $N$, the definition of $\tilde \phi$ yields that
\[ d_2\phi (q_1,q_2) = - \tilde\phi (\omega(q_1,q_2) s(q_1q_2)) = - \phi(\omega(q_1,q_2)).\]
\end{proof}

\subsection{$H^*(Q,M)$, when $Q$ is elementary abelian}

It is well known that the cohomology group $H^1(Q,M)$ consists of crossed homomorphisms $Q\to M$ modulo the principal ones. This description can be seen as coming from the canonical bar resolution of the trivial module $\Z$. For our applications, however, it is also convenient to use the fact that $Q $ is assumed to be elementary abelian of rank $r+1$ (where $r=\frac{p-1}{2}$), i.e., $Q \cong C_p^{r+1}$, and use the resolution coming from tensoring $(r+1)$ minimal $C_p$-resolutions. We will use the resulting chain complex for computing $H^1(Q,M)$.  
More importantly, in the next subsections, we will use a comparison between cocycles of these different resolutions in order to obtain a more direct criterion equivalent to \cref{prop:kerd2v1} in \cref{thm:d2Explicit,Ckerneld2}. As we will delve pretty deeply into the inner workings of these resolutions, we start by recalling their constructions.

\subsubsection{The canonical or bar resolution}  \label{SresA}

For $i \geq 0$, let $B_i = \Z[Q^{i+1}] \cong \Z[Q]^{\otimes (i+1)}$.
Then $B_i \simeq \ZZ[Q] \otimes B_{i-1}$ for $i \geq 1$.
Thus, $B_i$ is a free $\ZZ[Q]$-module generated by elements of the form $[q_1\otimes \dots \otimes q_i]$, with each $q_i\in Q$. 
There is a free resolution
\begin{align}
B_\bullet = \{ \cdots \to B_2 \to B_1 \to B_0 \} \to \Z,
\end{align}
where the differential $d:B_n\to B_{n-1}$ is given by $d=\sum_{i=0}^n (-1)^id_i$, and each $d_i$ is the $\Z[Q]$-equivariant map determined by
\begin{align*}
d_0([g_1\otimes \cdots \otimes g_n]) &= g_1 \cdot [g_2\otimes \cdots \otimes g_n],\\
d_i([g_1\otimes \cdots \otimes g_n]) &= [g_1\otimes \cdots \otimes g_i g_{i+1} \otimes \cdots \otimes g_n], \  \text{for } 1\leq i \leq n-1,\\
d_n([g_1\otimes \cdots \otimes g_n]) &= [g_1\otimes \cdots \otimes g_{n-1}].
\end{align*}

In particular, $d:B_1 \to B_0$ is given by $d([g_1]) = g_1 \cdot [1] - [1]$ 
and $d:B_2 \to B_1$ is given by 
$d([g_1 \otimes g_2])= g_1 \cdot [g_2] - [g_1g_2] + [g_1]$.

\subsubsection{The tensor complex of minimal $C_p$-resolutions} \label{SresB} 

Let $\tau$ be a generator of $C_p$; then the complex
\[ C_\bullet = \{ \cdots \Z[C_p] \xrightarrow{1-\tau} \Z[C_p] \xrightarrow{N_{\tau}} \Z[C_p] \xrightarrow{1-\tau} \Z[C_p] \} \to \Z \]
is a free resolution of the trivial $\Z[C_p]$-module $\Z$. Now $\Z[Q] \cong \otimes_{j=0}^r \Z[C_p]$.
Thus a free resolution of the trivial $\Z[Q]$-module $\Z$ is given by the (totalization of the) tensor 
complex $\otimes_{j=0}^r C_\bullet$. 

To make this brutally explicit, 
for $0 \leq j \leq r$, let $C_{\bullet,j}$ denote the same complex as $C_\bullet$ but with the generator of $C_p$ denoted as $\tau_j$. 
For $i \geq 0$, the $i$th entry of the complex $C_{\bullet,j}$ is $C_{i,j} \cong \Z[C_p]$, and the map
$d_{i,j}:C_{i,j} \to C_{i-1,j}$ is multiplication by $\pm (1-\tau_j)$ if $i$ is odd and multiplication by $N_{\tau_j}$ if $i$ is even.

Therefore, $A_\bullet = {\rm Tot}(\otimes_{i=0}^r C_\bullet)$ has 
\[ A_n = \bigoplus_{i_0 +\cdots + i_{r} = n} C_{i_0, 0} \otimes \cdots \otimes C_{i_{r}, r} \cong \bigoplus_{i_0 +\cdots + i_{r} = n} \Z[Q].  \]

In particular, $A_0 \cong \Z[Q]$,
$A_1 \cong \Z[Q]^{r+1}$, and $A_2\cong \Z[Q]^{\rho}$,
where the exponent $\rho:=r+1+\binom{r+1}{2} = \frac{(p+1)(p+3)}{8}$ is the number of ways to partition $2$ into $r+1$ non-negative integers.

We need to define $A_1$ and $A_2$ more explicitly in order to describe the differential maps
$d:A_1 \to A_0$ and $d:A_2 \to A_1$.   
Since the notation is elaborate, first consider an example when $p=3$ and $r=1$.
Let $\sigma=\tau_0$ and $\tau=\tau_1$, then the complex is:
\[ \xymatrix@R=0pt@C+15pt{
	A_0 & A_1 & A_2 \\
	& 		& C_0 \otimes C_2 \ar[ld]_{N_\tau}\\
	& C_0 \otimes C_1  \ar[ld]^{1-\tau}
			& \oplus \\
C_0 \otimes C_0	& \oplus	& C_1 \otimes C_1 \ar[lu]_{-(1-\sigma)} \ar[ld]^{1-\tau} \\
	& C_1 \otimes C_0  \ar[lu]^{1-\sigma}		& \oplus \\
	&	 	& C_2 \otimes C_0  \ar[lu]^{N_\sigma}.
}\]
\begin{remark}\label{rem:signs}
Recall that negative signs must be introduced in the totalization of a double complex in order to make the differentials square to zero; see for example \cite[p.8]{Weibel}.
\end{remark}

More generally, 
recall that $A_n$ is a direct sum of submodules of the form 
\[S({\vec{v}})= C_{i_0, 0} \otimes \cdots \otimes C_{i_{r}, r} \cong \ZZ[Q],\]
where the entries of $\vec{v} = (i_0, \ldots, i_r)$ are non-negative numbers adding up to $n$.
For $n=1$, define $\vec{v}_j$ to have $j$th entry $1$ and all other entries $0$.
Then 
\[A_1= \bigoplus_{0 \leq j \leq r} S(\vec{v}_j).\]
For $n=2$, define $\vec{u}_j$ to have $j$th entry $2$ and all other entries $0$; and,
for $0 \leq j < k \leq r$, define $\vec{t}_{j,k}$ to have $j$th and $k$th entries $1$ and all other entries $0$.
Then 
\[A_2 = \left( \bigoplus_{0 \leq j \leq r} S(\vec{u}_j) \right) \oplus \left(\bigoplus_{0 \leq j < k \leq r} S(\vec{t}_{j,k}) \right).\]

The following results are now straightforward.

\begin{lemma} \label{dA1A0}
Writing $\alpha_1 \in A_1$ as $\alpha_1 = \oplus_{0 \leq j \leq r} g_j$ with $g_j \in S(\vec{v}_j)$, 
the differential $d:A_1 \to A_0$ is given by
\[ d(\alpha_1) = d(g_0, \dots, g_r) = \sum_{j=0}^r (1-\tau_j)g_j. \]
\end{lemma}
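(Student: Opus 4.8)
The statement to prove is \Cref{dA1A0}, which identifies the differential $d: A_1 \to A_0$ in the totalized tensor complex $A_\bullet = {\rm Tot}(\otimes_{j=0}^r C_{\bullet,j})$.

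\textbf{Plan.} The plan is to unwind the definition of the totalization of the multi-complex $\otimes_{j=0}^r C_{\bullet,j}$ in degrees $0$ and $1$, and track how each factor contributes. First I would recall that $A_0 = C_{0,0} \otimes \cdots \otimes C_{0,r} \cong \ZZ[Q]$, that $A_1 = \bigoplus_{j=0}^r S(\vec v_j)$ where $S(\vec v_j) = C_{0,0} \otimes \cdots \otimes C_{1,j} \otimes \cdots \otimes C_{0,r}$, and that in each tensor factor $C_{\bullet,j}$ the map $C_{1,j} \to C_{0,j}$ is multiplication by $1-\tau_j$ (this is the first, odd-degree differential in the minimal $C_p$-resolution, as recalled in \Cref{SresB}). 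The totalization differential restricted to the summand $S(\vec v_j)$ is, up to sign, the tensor product of the identity on every $C_{0,k}$ for $k \neq j$ with the internal differential $d_{1,j}: C_{1,j} \to C_{0,j}$; all the other internal differentials $d_{0,k}$ vanish since $C_{-1,k}=0$. Under the identifications $S(\vec v_j) \cong \ZZ[Q]$ and $A_0 \cong \ZZ[Q]$, this map is simply multiplication by $1-\tau_j$ on $\ZZ[Q]$.

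\textbf{Sign bookkeeping.} The one genuinely careful point — which I expect is the main (mild) obstacle — is confirming the sign. In the totalization of a multicomplex, the differential on the summand indexed by $\vec v_j$ picks up a sign $(-1)^{i_0 + \cdots + i_{j-1}}$ from the Koszul sign rule, which in the case $\vec v_j$ (all earlier entries zero) equals $(-1)^0 = +1$. So for the map $A_1 \to A_0$ there is in fact no sign, consistently with the $p=3$ picture drawn in the excerpt, where both arrows $C_1\otimes C_0 \to C_0 \otimes C_0$ and $C_0 \otimes C_1 \to C_0\otimes C_0$ carry coefficient $1-\sigma$ and $1-\tau$ respectively with positive sign (the negative signs only appear at the $A_2 \to A_1$ stage, as noted in \Cref{rem:signs}). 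I would remark on this explicitly to reassure the reader that the positive sign is correct here.

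\textbf{Conclusion.} Assembling these observations: writing $\alpha_1 = (g_0, \ldots, g_r)$ with $g_j \in S(\vec v_j) \cong \ZZ[Q]$, the totalized differential acts summand-by-summand and sends $g_j$ to $(1-\tau_j) g_j \in A_0 \cong \ZZ[Q]$, so $d(\alpha_1) = \sum_{j=0}^r (1-\tau_j) g_j$, which is exactly the claimed formula. The whole argument is a direct unwinding of definitions, so the write-up should be short; the only thing worth stating carefully is the sign computation in the Koszul rule.
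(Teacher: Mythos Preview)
Your proposal is correct and matches the paper's approach: the paper simply declares that the formula is ``straightforward'' from the definitions and gives no further argument, so your direct unwinding of the totalization (together with the Koszul sign check) is exactly the intended verification.
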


\begin{lemma} \label{dA2A1}
The differential $d:A_2 \to A_1$ is defined using the following maps on the given components
(and the zero map everywhere else)
\begin{eqnarray*}
d_2=N_{\tau_j} & : & S(\vec{u}_j) \to S(\vec{v}_j),\\
-d_1=-(1-\tau_j) & : & S(\vec{t}_{j,k}) \to S(\vec{v}_j),\\
d_1 = (1-\tau_k) & : & S(\vec{t}_{j,k}) \to S(\vec{v}_k).
\end{eqnarray*}

In other words, writing $\alpha_2 \in A_2$ as 
\[\alpha_2 = (\oplus_{0 \leq j \leq r} g_j,  \oplus_{0 \leq j < k \leq r} h_{j,k}),\] 
with $g_j \in S(\vec{u}_j)$ and $h_{j,k} \in S(\vec{t}_{j,k})$,
then $d(\alpha_2)= \oplus_{0 \leq j \leq r} \beta_j$ where 
\[\beta_j = N_{\tau_j} g_j - \sum_{k < j} (1- \tau_k) h_{k,j} + \sum_{k>j} (1-\tau_k) h_{j,k}.\]
\end{lemma}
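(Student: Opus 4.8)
The plan is to unwind the definition of the differential on the totalization $A_\bullet = {\rm Tot}\big(\bigotimes_{j=0}^{r} C_{\bullet,j}\big)$ in homological degrees $1$ and $2$, keeping careful track of the Koszul signs flagged in \Cref{rem:signs}. On a summand $C_{i_0,0} \otimes \cdots \otimes C_{i_r,r}$ of $A_n$, with $i_0 + \cdots + i_r = n$, the total differential acts by the graded Leibniz rule
\[ d = \sum_{j=0}^{r} (-1)^{i_{j+1}+\cdots+i_r}\ {\rm id} \otimes \cdots \otimes d_{i_j,j} \otimes \cdots \otimes {\rm id}, \]
where $d_{i,j}\colon C_{i,j}\to C_{i-1,j}$ is the differential of the $j$th minimal resolution, namely multiplication by $1-\tau_j$ for $i$ odd, by $N_{\tau_j}$ for $i$ even, and the zero map for $i=0$. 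This is exactly the sign convention that makes $d^2=0$, and one checks immediately that it reproduces the $p=3$ diagram displayed above. As a warm-up I would treat degree $1$: there $A_1=\bigoplus_j S(\vec v_j)$, and on $S(\vec v_j)$ the only nonzero term of the Leibniz sum is the one differentiating the $j$th factor, which is multiplication by $d_{1,j}=1-\tau_j$, with sign $(-1)^{0}=1$ since $i_l=0$ for $l>j$; summing over $j$ recovers \Cref{dA1A0}.

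Then I would apply the same formula to $A_2 = \bigoplus_{j} S(\vec u_j)\ \oplus\ \bigoplus_{j<k} S(\vec t_{j,k})$. On $S(\vec u_j)=C_{2,j}\otimes\bigotimes_{l\neq j}C_{0,l}$ only the $j$th tensor factor sits in positive homological degree, so the unique nonzero term of $d$ is $d_{2,j}=N_{\tau_j}$, with Koszul sign $(-1)^{0}=1$, and its image lies in $S(\vec v_j)$. On $S(\vec t_{j,k})=C_{1,j}\otimes C_{1,k}\otimes\bigotimes_{l\neq j,k}C_{0,l}$ with $j<k$, exactly two terms survive: differentiating the $j$th factor via $d_{1,j}=1-\tau_j$, which carries sign $(-1)^{i_k}=-1$ and reduces the $j$th index to $0$; and differentiating the $k$th factor via $d_{1,k}=1-\tau_k$, which carries sign $(-1)^{0}=1$ and reduces the $k$th index to $0$. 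This gives the differential $d\colon A_2\to A_1$ explicitly on each summand.

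Finally I would collect, for fixed $j$, all contributions landing in $S(\vec v_j)$ to read off $\beta_j$: the term $N_{\tau_j}g_j$ coming from $S(\vec u_j)$; for each $k>j$ the term $(1-\tau_k)h_{j,k}$ (sign $+$), coming from differentiating the larger-index factor of $S(\vec t_{j,k})$; and for each $k<j$ the term $-(1-\tau_k)h_{k,j}$ (sign $-$), coming from differentiating the smaller-index factor of $S(\vec t_{k,j})$. Assembling these gives
\[ \beta_j = N_{\tau_j}g_j - \sum_{k<j}(1-\tau_k)h_{k,j} + \sum_{k>j}(1-\tau_k)h_{j,k}, \]
which is the asserted formula. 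As a consistency check, composing with \Cref{dA1A0} shows $d\circ d=0$: each $g_j$ contributes $(1-\tau_j)N_{\tau_j}=0$ and each $h_{j,k}$ contributes $(1-\tau_j)(1-\tau_k)-(1-\tau_k)(1-\tau_j)=0$ in $\Z[Q]$; this both confirms the computation and shows the sign choices above are forced. There is no conceptual obstacle here --- the one thing requiring genuine care is the bookkeeping of the Koszul signs, and the constraint $d^2=0$ removes any ambiguity in them.
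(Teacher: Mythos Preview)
The paper does not actually prove this lemma: it simply declares it, together with \Cref{dA1A0}, to be ``straightforward'' from the construction of the total complex. Your unwinding of the graded Leibniz differential with the Koszul signs of \Cref{rem:signs} is exactly that computation, correctly reproduces the $p=3$ diagram, and yields the stated formula for $\beta_j$; the $d^2=0$ check is a good confirmation that the signs are right. (Incidentally, your calculation gives the components $S(\vec t_{j,k})\to S(\vec v_j)$ and $S(\vec t_{j,k})\to S(\vec v_k)$ as $(1-\tau_k)$ and $-(1-\tau_j)$ respectively, in agreement with the diagram, the $\beta_j$ formula, and the use in \Cref{dA2B2}; the first display in the lemma statement has these two targets transposed, which is a typo in the paper, not a flaw in your argument.)
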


Again, the negative signs in front of some of the $d_1$'s are because of \cref{rem:signs}.

\begin{remark}\label{SH1Q}
Using \cref{dA1A0,dA2A1}, it is possible to compute $H^1(Q,M)$ directly.
For example, for small $p$, we used Magma to explicitly calculate $H^1(Q,M)$; here is a table for its dimension:
\[\begin{array}{c|c}
p & {\dim}(H^1(Q, M)) \\
\hline
3 & 9\\
5 & 33\\
7 & 68
\end{array}.\]
More information about the relationships between the kernels and images of $B_i-1$ as $i$ varies, as in \cref{QuestInvariant}, may yield a result for general $p$ along these lines. 
\end{remark}

\subsubsection{Comparison of resolutions}
The resolutions $A_\bullet$ and $B_\bullet$ constructed above are both injective resolutions of the trivial $Q$-module $\Z$. Therefore, by abstract nonsense, there is a quasi-isomorphism $f_\bullet : A_\bullet \to B_\bullet$, with each $f_i:A_i \to B_i$ being $Q$-equivariant. The goal of this subsection is to construct $f_0,f_1,f_2$. In fact, we will take $f_0$ to be the identity map on $A_0\cong B_0 = \Z[Q]$. The next two results determine $f_1$ and $f_2$ explicitly.

\begin{lemma} \label{dA1B1}
Write $\alpha_1 \in A_1$ as $\alpha_1 = \oplus_{0 \leq j \leq r} g_j$ with $g_j \in S(\vec{v}_j)$. 
Define $f_1: A_1 \to B_1$ by
\[f_1(\alpha_1)= f_1 (g_0,\dots, g_r) = - \sum_{j=0}^{r} g_j[\tau_j].\]
Then the following diagram commutes
\[ \xymatrix{ 
A_1 \ar[r]^{d^A}\ar[d]_{f_1} & A_0 \ar[d]_{\rm id} \\
B_1 \ar[r]^{d^B} & B_0.
}\]
\end{lemma}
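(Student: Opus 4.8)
The plan is to verify directly that $d^B \circ f_1 = \mathrm{id} \circ d^A$ as maps $A_1 \to B_0$, using the explicit formulas for all four maps. Since $A_1 = \bigoplus_{0\le j\le r} S(\vec v_j)$ is a free $\Z[Q]$-module, and all maps in the square are $Z[Q]$-equivariant, it suffices to check commutativity on the single generator of each summand $S(\vec v_j)\cong \Z[Q]$; equivalently, it suffices to check it on an element of the form $\alpha_1 = (0,\dots,0,g_j,0,\dots,0)$ with $g_j$ in the $j$th slot, and then sum over $j$.

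The computation proceeds along both paths of the square. Going right then down along the identity: by \Cref{dA1A0}, $d^A(\alpha_1) = (1-\tau_j)g_j \in A_0 = \Z[Q]$, and the identity map carries this to the same element of $B_0 = \Z[Q]$. Going down then right: $f_1(\alpha_1) = -g_j[\tau_j] \in B_1$, and by the formula for $d^B\colon B_1\to B_0$ from \Cref{SresA}, namely $d^B([g]) = g\cdot[1] - [1]$, $Z[Q]$-equivariance gives $d^B(-g_j[\tau_j]) = -g_j\bigl(\tau_j\cdot[1] - [1]\bigr) = -g_j\tau_j[1] + g_j[1] = (1-\tau_j)g_j[1]$. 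Under the identification $B_0 = \Z[Q]\cdot[1]\cong \Z[Q]$ this is exactly $(1-\tau_j)g_j$, matching the other path. Summing over $j$ gives the result for a general $\alpha_1 = \oplus_j g_j$.

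There is essentially no obstacle here: the only thing to be careful about is the sign convention in the definition of $f_1$ (the minus sign), which is precisely what makes $d^B[\tau_j] = \tau_j[1]-[1]$ line up with $d^A$'s $(1-\tau_j)$ rather than $(\tau_j - 1)$, and the identification of $B_0$ with $\Z[Q]$ via the generator $[1]$. The genuinely delicate bookkeeping — matching signs coming from the totalization of the double complex as in \Cref{rem:signs} — only enters at the next stage, in the construction of $f_2$ and the verification that it commutes with $d\colon A_2\to A_1$; for $f_1$ the verification is a one-line equivariant computation on each free generator.
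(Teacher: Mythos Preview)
Your proof is correct and follows essentially the same approach as the paper: both verify commutativity on the free $\Z[Q]$-module generators of $A_1$ using the explicit formulas for $d^A$ and $d^B$, then extend by $\Z[Q]$-equivariance. The only cosmetic difference is that the paper works with the unit generator $e_j$ (i.e., $g_j=1$) whereas you carry a general coefficient $g_j$ throughout, which amounts to the same computation.
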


\begin{proof}
Let $1_j \in S(\vec{v}_j) \subset A_1$ be 
the element such that $g_j=1$ and all other coordinates are zero.
By Lemma \ref{dA1A0}, ${\rm id}(d^A(e_j)) = 1-\tau_j$.
By definition $f_1(e_j)=-[\tau_j]$, which equals $d^B(f_1(e_j))=-(\tau_j-1)$.
Since $\{e_j\}$ generate $A_1$ as a $\Z[Q]$-module and all the maps are $Q$-equivariant,
the diagram commutes in general.
\end{proof}

\begin{lemma} \label{dA2B2}
Write $\alpha_2 \in A_2$ as 
$\alpha_2 = (\oplus_{0 \leq j \leq r} g_j,  \oplus_{0 \leq j < k \leq r} h_{j,k})$,
with $g_j \in S(\vec{u}_j)$ and $h_{j,k} \in S(\vec{t}_{j,k})$.
Define $f_2:A_2 \to B_2$ as follows:
\[f_2(\alpha_2) = - \sum_{j=0}^r g_i [N_{\tau_i} \otimes \tau_i] + 
\sum_{0 \leq j < k \leq r} h_{j,k} ( \tau_k \otimes \tau_j - \tau_j \otimes \tau_k).\]
Then the following diagram commutes
\[ \xymatrix{ 
A_2 \ar[r]^{d^A}\ar[d]_{f_2} & A_1 \ar[d]^{f_1} \\
B_2 \ar[r]^{d^B} & B_1.
} \]
\end{lemma}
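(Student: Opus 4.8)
The plan is to verify the commutativity of the square in \Cref{dA2B2} by a direct computation, reducing to the $\Z[Q]$-module generators of $A_2$ exactly as in the proof of \Cref{dA1B1}. Since all four maps $d^A$, $d^B$, $f_1$, $f_2$ are $Q$-equivariant, it suffices to check $f_1 \circ d^A = d^B \circ f_2$ on the distinguished generators: the element $1_j \in S(\vec u_j)$ (with the $g_j$-coordinate equal to $1$ and all others zero) for $0 \leq j \leq r$, and the element $1_{j,k} \in S(\vec t_{j,k})$ (with the $h_{j,k}$-coordinate equal to $1$ and all others zero) for $0 \leq j < k \leq r$.

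First I would handle the $S(\vec u_j)$ generators. By \Cref{dA2A1}, $d^A(1_j) = N_{\tau_j} \in S(\vec v_j) \subset A_1$, and then by \Cref{dA1B1}, $f_1(d^A(1_j)) = -N_{\tau_j}[\tau_j]$. On the other side, $f_2(1_j) = -[N_{\tau_j} \otimes \tau_j]$ by definition, and I would apply the bar differential $d^B([g_1 \otimes g_2]) = g_1 \cdot [g_2] - [g_1 g_2] + [g_1]$ extended $\Z$-linearly in each slot; expanding $N_{\tau_j} = 1 + \tau_j + \cdots + \tau_j^{p-1}$ in the first slot, the terms $-[g_1 g_2]$ and $+[g_1]$ should telescope (using $\tau_j^p = 1$) leaving exactly $-N_{\tau_j}[\tau_j]$. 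This is the routine ``norm telescoping'' computation and I expect it to match on the nose.

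Next I would handle the $S(\vec t_{j,k})$ generators, which is where the signs from \Cref{rem:signs} enter and which I expect to be the main obstacle — not because it is deep, but because it is the place where a sign error is easiest to make. By \Cref{dA2A1}, $d^A(1_{j,k}) = -(1-\tau_j)\cdot 1_{(j)} + (1-\tau_k)\cdot 1_{(k)}$ where $1_{(j)} \in S(\vec v_j)$ and $1_{(k)} \in S(\vec v_k)$; applying $f_1$ gives $(1-\tau_j)[\tau_k] - (1-\tau_k)[\tau_j]$ after the sign from $f_1$. On the other side, $f_2(1_{j,k}) = \tau_k \otimes \tau_j - \tau_j \otimes \tau_k$, and applying $d^B$ to each bar-2-simplex and using $[g_1 g_2] = [g_2 g_1]$ when $g_1, g_2$ commute (as they do in the abelian group $Q$), the cross terms $\pm[\tau_j \tau_k]$ should cancel, leaving $\tau_k[\tau_j] - [\tau_k] - \tau_j[\tau_k] + [\tau_j]$, which I would then rearrange to match $(1-\tau_j)[\tau_k] - (1-\tau_k)[\tau_j]$. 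The check that these two expressions agree is the crux, and getting the signs in $f_2$, in $d^A$ (the $-(1-\tau_j)$ on the $S(\vec t_{j,k}) \to S(\vec v_j)$ component), and in $f_1$ to all line up is the delicate part.

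Finally, having verified the identity on all the module generators and invoking $Q$-equivariance, I would conclude that the diagram commutes, which completes the proof. As a sanity check I would also confirm that $f_2$ is well-defined, i.e., that the stated formula indeed lands in $B_2 = \Z[Q^3]$ and respects the direct sum decomposition of $A_2$; this is immediate from the definitions but worth stating. I do not anticipate needing anything beyond \Cref{dA1A0}, \Cref{dA2A1}, \Cref{dA1B1}, and the explicit bar differentials recalled in \Cref{SresA}.
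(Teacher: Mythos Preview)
Your approach is exactly the paper's: verify commutativity on the $\Z[Q]$-module generators $1_j$ and $1_{j,k}$ of $A_2$ by computing $f_1 \circ d^A$ and $d^B \circ f_2$ directly, then conclude by $Q$-equivariance. Be aware that your sketch contains a couple of the very sign slips you anticipated: using the ``in other words'' formula in \Cref{dA2A1} one has $d^A(1_{j,k}) = (1-\tau_k)e_j - (1-\tau_j)e_k$ (not $-(1-\tau_j)e_j + (1-\tau_k)e_k$), and $d^B(f_2(1_{j,k})) = \tau_k[\tau_j] - \tau_j[\tau_k] + [\tau_k] - [\tau_j]$ (your $[\tau_j]$ and $[\tau_k]$ have the wrong signs), after which both sides do agree with $(1-\tau_j)[\tau_k] - (1-\tau_k)[\tau_j]$.
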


\begin{proof}
By Lemma \ref{dA2A1},
$d^A(\alpha_2)= \oplus_{0 \leq j \leq r} \beta_j$ where 
\[\beta_j = N_{\tau_j} g_j - \sum_{k < j} (1- \tau_k) h_{k,j} + \sum_{k>j} (1-\tau_k) h_{j, k}.\] 
Let $1_j \in S(\vec{u}_j) \subset A_2$ be the element such that $g_j=1$ and all other coordinates are zero.
Then $f_1(d^A(1_j)) = - N_{\tau_j} [\tau_j]$.
By definition, $f_2(1_j)= -[N_{\tau_j} \otimes \tau_j]$.   Since $N_{\tau_j} \tau_j=N_{\tau_j}$, 
it follows that
\[d^B(f_2(1_j)) = -( N_{\tau_j}[\tau_j]-[N_{\tau_j} \tau_j ]+[N_{\tau_j}]) = - N_{\tau_j} [\tau_j].\]

Finally, let $1_{j,k} \in S(\vec{t}_{j,k}) \subset A_2$ be the element such that $h_{j,k}=1$ and all other coordinates are zero.
Then
\[d^A(1_{j,k})= (1-\tau_k) e_j -(1-\tau_j)e_k,\] and 
\[ f_1(d^A(1_{j,k})) =  f_1((1-\tau_k) e_j -(1-\tau_j)e_k) = -(1-\tau_k)[\tau_j] + (1-\tau_j)[\tau_k].\]
By definition, $f_2(1_{j,k})=\tau_k \otimes \tau_j - \tau_j \otimes \tau_k$.
Then
\begin{eqnarray*}
d^B([\tau_k \otimes \tau_j] - [\tau_j \otimes \tau_k]) & = &
(\tau_k[\tau_j] - [\tau_k \tau_j]+[\tau_k] )- (\tau_j[\tau_k] - [\tau_j \tau_k] + [\tau_j]) \\
& = & (\tau_k-1)[\tau_j] - (\tau_j-1)[\tau_k].
\end{eqnarray*}
Since $\{1_j, 1_{j,k}\}$ generate $A_2$ as a $\Z[Q]$-module and all the maps are $Q$-equivariant,
the diagram commutes in general.
\end{proof}

\subsection{Comparison of cocycles}

In the above, we constructed two resolutions of the trivial $Q$-module $\Z$, and explicitly constructed a map between them in low degrees. Now we investigate what this tells us in cohomology. Namely, we know that 
\[ H^*(Q,M) = \Ext^*_{\Z[Q]}(\Z,M), \]
and the latter can be computed as either $H^*\Hom_{\Z[Q]}(A_\bullet, M )$ or $H^*\Hom_{\Z[Q]}(B_\bullet, M )$. The map $f_\bullet$ gives us a way to compare these two approaches.

Consider a $1$-cocycle $a \in H^1(Q,M)$. 
Let $\phi: Q \to M$ be a bar resolution representative of $a$, so that the class of $\phi$ in $H^1(Q,M)$ is $a$.
Then $\phi$ can be uniquely extended to (and encodes the information of) a $\Z[Q]$-module map $\tilde \phi: \Z[Q]^{\otimes 2} \to M$. 
A representative of $a$ in the $A_\bullet$ resolution is the composition $\psi = \tilde{\phi} \circ f_1$, namely
\[ \psi: A_1 \cong \Z[Q]^{r+1} \xrightarrow{f_1} B_1 \cong \Z[Q]^{\otimes 2} \xrightarrow{\tilde \phi} M. \] 
Now $\psi$ is a $\Z[Q]$-equivariant map determined by its values on the generators $e_j$ of $A_1$.
By \cref{dA1B1},
\[ m_j:= \psi(e_j) = \tilde\phi(-[\tau_j]) = - \phi(\tau_j),\]
giving the following result.

\begin{lemma} \label{Lcomparecycle1}
In the resolution $\Hom_{\Z[Q]}(A_\bullet, M )$, which starts as
\[ M \to M^{r+1} \to M^{\rho} \to \cdots, \]
the tuple $(m_0, \ldots, m_r)=(-\phi(\tau_0), \ldots, -\phi(\tau_r)) \in M^{r+1}$  represents
the class $a \in H^1(Q,M)$ of the map $\phi:Q\to M$. 
\end{lemma}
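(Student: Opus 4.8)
The plan is to trace through the comparison map $f_\bullet : A_\bullet \to B_\bullet$ constructed in \Cref{dA1B1,dA2B2} and observe what it does to cocycle representatives. First I would recall the general principle: whenever one has a quasi-isomorphism $f_\bullet$ between two projective resolutions of $\Z$ over $\Z[Q]$, precomposition induces the (canonical) identification of $\Ext$-groups computed from the two resolutions; concretely, if $\phi$ is a cocycle in $\Hom_{\Z[Q]}(B_\bullet, M)$ then $\phi \circ f_1$ is a cocycle in $\Hom_{\Z[Q]}(A_\bullet, M)$ representing the same cohomology class, since $f_\bullet$ is a chain map (so $(\phi\circ f_1)\circ d^A = \phi \circ d^B \circ f_2$, hence cocycles go to cocycles) and is independent of choices up to chain homotopy (so the class is well-defined). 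I would state this, citing the standard fact that comparison maps between projective resolutions are unique up to homotopy \cite[Theorem 2.2.6]{Weibel}.

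Next I would make the bar-resolution bookkeeping explicit. A class $a \in H^1(Q,M)$ is represented by a crossed homomorphism $\phi:Q\to M$; identifying $B_1 = \Z[Q]^{\otimes 2}$ with the free $\Z[Q]$-module on the symbols $[\tau]$ for $\tau\in Q$, the cocycle $\phi$ extends uniquely to the $\Z[Q]$-linear map $\tilde\phi:B_1\to M$ with $\tilde\phi([\tau]) = \phi(\tau)$. This is exactly the passage from the ``inhomogeneous'' description of $H^1$ to the $\Hom_{\Z[Q]}(B_\bullet,-)$ description, and I would just record it.

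Then comes the computation, which is short. Set $\psi = \tilde\phi\circ f_1 : A_1\to M$. Since $A_1\cong \Z[Q]^{r+1}$ is free on the generators $e_j \in S(\vec v_j)$ (the element with $g_j=1$ and all other coordinates $0$), $\psi$ is determined by the tuple $(\psi(e_0),\dots,\psi(e_r))\in M^{r+1}$. By the formula for $f_1$ in \Cref{dA1B1}, $f_1(e_j) = -[\tau_j]$, so $\psi(e_j) = \tilde\phi(-[\tau_j]) = -\phi(\tau_j)$. This is precisely the asserted tuple $(m_0,\dots,m_r) = (-\phi(\tau_0),\dots,-\phi(\tau_r))$, and by the first paragraph it represents $a$ in $\Hom_{\Z[Q]}(A_\bullet, M)$, whose first few terms are $M\to M^{r+1}\to M^\rho$ by the identifications $A_0\cong\Z[Q]$, $A_1\cong\Z[Q]^{r+1}$, $A_2\cong\Z[Q]^\rho$ recorded before \Cref{dA1A0}.

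I do not expect a genuine obstacle here; the content of the lemma is entirely the preceding constructions of $f_1$ (and the verification that $f_\bullet$ is a chain map, already done in \Cref{dA1B1,dA2B2}). The only point requiring care — and the one I would be sure to spell out — is the sign and the direction of the extension $\phi\mapsto\tilde\phi$, i.e. that identifying $B_1$ with the free module on $\{[\tau]\}$ sends the crossed homomorphism $\phi$ to the $\Z[Q]$-map with $[\tau]\mapsto\phi(\tau)$, so that the $-[\tau_j]$ in $f_1$ is what produces the minus sign in $m_j=-\phi(\tau_j)$.
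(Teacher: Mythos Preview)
Your proposal is correct and follows essentially the same route as the paper: the paper also extends $\phi$ to a $\Z[Q]$-linear map $\tilde\phi$, precomposes with $f_1$, and evaluates on the generators $e_j$ using \Cref{dA1B1} to obtain $m_j=-\phi(\tau_j)$. Your version simply spells out a bit more of the homological algebra underlying why $\tilde\phi\circ f_1$ represents the same class.
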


Next, consider a $2$-cocycle $b \in H^2(Q,M)$.
Let $\varphi:Q\times Q \to M$ represent $b$. 
The map $\varphi$ uniquely determines a $\ZZ[Q]$-equivariant map 
$\tilde \varphi: B_2\cong \Z[Q]^{\otimes 3} \to M$, by extending $\Z[Q]$-linearly.
A representative of $b$ in the $A_\bullet$ resolution is the composition
\[ \theta: A_2 \cong \Z[Q]^{\rho} \xrightarrow{f_2} B_2  \xrightarrow{\tilde \varphi} M. \]
The map $\theta$ is determined by its values on the $\Z[Q]$-generators $1_j$ and $1_{j,k}$ of $A_2$.
By \cref{dA2B2},
\begin{align*}
n_j:=\theta(1_j) &= \tilde\varphi([ -N_{\tau_j} \otimes \tau_j]) 
= - \tilde \varphi(N_{\tau_j}, \tau_j) = - \sum_{i=0}^{p-1} \varphi(\tau_j^i,\tau_j) ,\\
n_{j,k}:=\theta(1_{j,k}) &= \tilde\varphi([\tau_k \otimes \tau_j] - [\tau_j \otimes \tau_k]) 
= \varphi(\tau_k,\tau_j)-\varphi(\tau_j,\tau_k),\\
\end{align*}
proving the following result.

\begin{lemma} \label{Lcomparecycle2}
In the resolution $\Hom_{\Z[Q]}(A_\bullet, M )$, which starts as
\[ M \to M^{r+1} \to M^{\rho } \to \cdots, \]
the tuple $(n_j, n_{j,k}) \in M^{\rho}$ defined above represents the class $b \in H^2(Q,M)$ 
of the map $\varphi:Q\times Q \to M$. 
\end{lemma}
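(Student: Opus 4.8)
The plan is to read off the claim from the fact that $f_\bullet\colon A_\bullet\to B_\bullet$ is a chain map lifting $\mathrm{id}_{\Z}$, exactly as was done in degree one to obtain \cref{Lcomparecycle1}. First I would note that $f_0=\mathrm{id}$, together with $f_1$ and $f_2$ from \cref{dA1B1,dA2B2}, forms the beginning of such a chain map: the commuting squares $d^B f_1=f_0 d^A$ and $d^B f_2=f_1 d^A$ are precisely the content of those two lemmas, so by the comparison theorem for projective resolutions this partial chain map extends to a genuine chain map $f_\bullet$, any two extensions being chain homotopic. Consequently $\Hom_{\Z[Q]}(f_\bullet,M)$ realizes, on cohomology, the canonical identification $H^*\Hom_{\Z[Q]}(B_\bullet,M)\xrightarrow{\ \sim\ }H^*\Hom_{\Z[Q]}(A_\bullet,M)$, both sides being $\Ext^*_{\Z[Q]}(\Z,M)$, because $f_0=\mathrm{id}$ lifts $\mathrm{id}_{\Z}$.

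Granting this, the proof is a translation. Let $\varphi\colon Q\times Q\to M$ be a bar-resolution $2$-cocycle representing $b$, and let $\tilde\varphi\colon B_2\cong\Z[Q]^{\otimes 3}\to M$ be its $\Z[Q]$-linear extension. Then $\Hom(f_2,M)$ carries the cocycle $\tilde\varphi$ to the cocycle $\theta=\tilde\varphi\circ f_2\colon A_2\to M$, which therefore represents $b$ in $\Hom_{\Z[Q]}(A_\bullet,M)$. It remains only to evaluate $\theta$ on the $\Z[Q]$-module generators $1_j\in S(\vec{u}_j)$ and $1_{j,k}\in S(\vec{t}_{j,k})$ of $A_2$. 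Substituting $f_2(1_j)=-[N_{\tau_j}\otimes\tau_j]$ and $f_2(1_{j,k})=\tau_k\otimes\tau_j-\tau_j\otimes\tau_k$ from \cref{dA2B2}, and using $\Z[Q]$-linearity of $\tilde\varphi$ together with $N_{\tau_j}=\sum_{i=0}^{p-1}\tau_j^i$, yields
\[
n_j=\theta(1_j)=-\sum_{i=0}^{p-1}\varphi(\tau_j^i,\tau_j),\qquad
n_{j,k}=\theta(1_{j,k})=\varphi(\tau_k,\tau_j)-\varphi(\tau_j,\tau_k).
\]
Under the identification $A_2\cong\Z[Q]^{\rho}$ of \cref{SresB}, the family $(n_j,n_{j,k})$ is precisely the element of $M^{\rho}$ corresponding to $\theta$, which is the asserted representative.

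The only point needing real care — and it is bookkeeping rather than mathematics — is keeping the sign conventions consistent: the signs built into the totalization of the tensor double complex (\cref{rem:signs}), the minus signs in the definitions of $f_1$ and $f_2$, and the sign $d_2\phi=-\phi\circ\omega$ from \cref{prop:kerd2v1} must all be tracked carefully, since \cref{Lcomparecycle1,Lcomparecycle2} will be fed into the explicit kernel-of-$d_2$ description. I would guard against a stray sign by re-deriving $d^B f_2(1_j)$ and $d^B f_2(1_{j,k})$ exactly as in the proof of \cref{dA2B2}, checking they match $f_1 d^A$, before reading off $n_j$ and $n_{j,k}$.
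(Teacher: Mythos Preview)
Your proposal is correct and follows essentially the same approach as the paper: both compose the bar-cocycle $\tilde\varphi$ with the comparison map $f_2$ from \cref{dA2B2} and evaluate on the generators $1_j,1_{j,k}$ of $A_2$, with the chain-map property of $f_\bullet$ guaranteeing that the resulting $\theta$ represents the same class $b$. The paper's argument is slightly terser (it places the computation of $n_j,n_{j,k}$ before the lemma statement), but the logic is identical.
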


\subsection{The kernel of $d_2$, revisited}

Using the comparison of cocycles from the previous section, we give a more direct description of the kernel of the transgression $d_2: H^1(N,M)^Q \to H^2(Q,M^N)$ (compared to what \cref{prop:kerd2v1} implies), when $N$ acts trivially on $M$ and $Q$ is elementary abelian.

We set up some notation associated to the extension 
\begin{equation} \label{shortexact}
1\to N \to G \to Q \to 1.
\end{equation}
We assume that $Q$ is elementary abelian of rank $(r+1)$; choose generators of $Q$ and denote them by $\tau_i$, with $0\leq i \leq r$. To define the factor set $\omega$, we used a section $s:Q \to G$ (and noted that as a cohomology element, $\omega$ does not depend on $s$). Without loss of generality, we can assume not only that $s(1)=1$, but also 
\[ s(\tau_0^{t_0} \cdots \tau_r^{t_r}) = s(\tau_0)^{t_0}\cdots s(\tau_r)^{t_r}, \text{   for } 0\leq t_i \leq p-1.\]
For $0\leq j \leq r$, define elements $a_j \in N$ by
\[ a_j = s(\tau_j)^p,\]
and for $0 \leq j < k \leq r$, define $c_{j,k} \in N$ by
\[ c_{j,k} = [ s(\tau_k), s(\tau_j) ] = s(\tau_k) s(\tau_j) s(\tau_k)^{-1} s(\tau_j)^{-1} . \]
Recall that $\omega:Q\times Q \to N$ was defined as
\[ \omega(q_1,q_2) = s(q_1) s(q_2) s(q_1q_2)^{-1}. \]
Elementary calculation then yields the following result.
\begin{lemma}\label{Lomegaac}
If $0\leq j \leq r$ and $0 \leq t < p-1$, then $\omega(\tau_j^t, \tau_j)=0$ and $a_j = \omega(\tau_j^{p-1}, \tau_j)$.
If $0\leq j < k \leq r$, then $c_{j,k} = \omega(\tau_k,\tau_j)\omega(\tau_j, \tau_k)^{-1}$. 
\end{lemma}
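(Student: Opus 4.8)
The plan is to verify the three claimed identities by unwinding the definitions of $\omega$, $a_j$, and $c_{j,k}$, using only the normalization we imposed on the section, namely that $s(\tau_0^{t_0}\cdots \tau_r^{t_r}) = s(\tau_0)^{t_0}\cdots s(\tau_r)^{t_r}$ for $0 \leq t_i \leq p-1$, together with $s(1)=1$. Throughout, I will write the group operation in $N$ and $M$ additively where convenient (as in the statement, e.g.\ ``$\omega(\tau_j^t,\tau_j)=0$''), but the computation is really just in the group $G$.

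First I would handle the powers of a single generator. For $0 \leq t < p-1$, the product $\tau_j^t \cdot \tau_j = \tau_j^{t+1}$ has exponent $t+1 \leq p-1$, so the normalization gives $s(\tau_j^t) s(\tau_j) = s(\tau_j)^t s(\tau_j) = s(\tau_j)^{t+1} = s(\tau_j^{t+1}) = s(\tau_j^t \tau_j)$. Hence $\omega(\tau_j^t,\tau_j) = s(\tau_j^t)s(\tau_j)s(\tau_j^{t+1})^{-1} = 1$, which is the claimed vanishing. For $t = p-1$, we have $\tau_j^{p-1}\cdot \tau_j = 1$ in $Q$, so $s(\tau_j^{p-1}\tau_j)^{-1} = s(1)^{-1} = 1$, and therefore $\omega(\tau_j^{p-1},\tau_j) = s(\tau_j^{p-1})s(\tau_j) = s(\tau_j)^{p-1}s(\tau_j) = s(\tau_j)^p = a_j$.

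Next I would handle the commutator identity. Since $Q$ is abelian, $\tau_k \tau_j = \tau_j \tau_k$ in $Q$, so $\omega(\tau_k,\tau_j) = s(\tau_k)s(\tau_j)s(\tau_k\tau_j)^{-1}$ and $\omega(\tau_j,\tau_k) = s(\tau_j)s(\tau_k)s(\tau_j\tau_k)^{-1} = s(\tau_j)s(\tau_k)s(\tau_k\tau_j)^{-1}$, using the same value $s(\tau_k\tau_j) = s(\tau_j\tau_k)$ in both. Therefore
\[
\omega(\tau_k,\tau_j)\,\omega(\tau_j,\tau_k)^{-1} = s(\tau_k)s(\tau_j)s(\tau_k\tau_j)^{-1}\,s(\tau_k\tau_j)\,s(\tau_k)^{-1}s(\tau_j)^{-1} = s(\tau_k)s(\tau_j)s(\tau_k)^{-1}s(\tau_j)^{-1},
\]
which is exactly $c_{j,k}$ by definition. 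One small point to note is that both $\omega$-values lie in $N$ (being cocycle values of the factor set), so the product and inverse above make sense and land in $N$, consistent with $c_{j,k} \in N$.

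There is essentially no main obstacle here: the lemma is purely formal, and the only thing to be careful about is invoking the normalization of $s$ precisely where it is needed (to split $s(\tau_j^{t+1})$ as $s(\tau_j)^{t+1}$ when $t+1 \leq p-1$) and recognizing that $\tau_k\tau_j = \tau_j\tau_k$ lets us use a single common value $s(\tau_k\tau_j)$ in the two commutator terms so that the middle factors cancel. I would present the three computations in the order above, each as a short displayed chain of equalities.
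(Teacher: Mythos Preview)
Your proposal is correct and is exactly the kind of elementary verification the paper has in mind; indeed, the paper does not write out a proof at all, simply stating that ``elementary calculation then yields the following result.'' Your three displayed computations supply precisely that calculation, using the normalization of $s$ and the commutativity of $Q$ in just the right places.
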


\begin{theorem}\label{thm:d2Explicit}
The class of $\phi:N\to M$ is in the kernel of $d_2$ if and only if the 
tuple $(-\phi(a_j), \phi(c_{j,k})) \in M^{\rho}$ is in the image of the differential in $ \Hom_{\Z[Q]}(A_\bullet, M)$,
\[ d^M: M^{r+1} \to M^{\rho}\]
which, by Lemma \ref{dA2A1}, is explicitly given by
\[ d^M(m_0, \ldots, m_r) = (N_{\tau_j} m_j, - (1-\tau_j)m_k + (1-\tau_k)m_j).\] 
\end{theorem}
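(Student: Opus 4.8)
The plan is to reduce everything to the bar-resolution computation of the transgression in \Cref{prop:kerd2v1}, transport the resulting $2$-cocycle into the tensor-product resolution $A_\bullet$ using the cocycle comparisons of \Cref{Lcomparecycle1,Lcomparecycle2}, and then read off the criterion. First I would apply \Cref{prop:kerd2v1}: since $N$ acts trivially on $M$, the transgression $d_2(\phi)\in H^2(Q,M)$ is represented by the bar $2$-cocycle $\varphi=-\,\phi\circ\omega$, where $\omega$ is the factor set of \eqref{shortexact} attached to the chosen normalized section $s$. Thus $\phi\in\ker d_2$ if and only if $[\varphi]=0$ in $H^2(Q,M)$. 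Because $A_\bullet$ and $B_\bullet$ are both free resolutions of the trivial $\Z[Q]$-module $\Z$, the comparison map $f_\bullet\colon A_\bullet\to B_\bullet$ of \Cref{dA1B1,dA2B2} is a chain homotopy equivalence, so $f^{*}$ is an isomorphism on $H^{*}\Hom_{\Z[Q]}(-,M)$; hence $[\varphi]=0$ if and only if the pulled-back cocycle $\tilde\varphi\circ f_2\in\Hom_{\Z[Q]}(A_2,M)$ is a coboundary, i.e.\ lies in the image of the differential $d^{M}\colon\Hom_{\Z[Q]}(A_1,M)\to\Hom_{\Z[Q]}(A_2,M)$.

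Next I would make $\tilde\varphi\circ f_2$ explicit. By \Cref{Lcomparecycle2}, as an element of $M^{\rho}$ it is the tuple $(n_j,n_{j,k})$ with $n_j=-\sum_{i=0}^{p-1}\varphi(\tau_j^{i},\tau_j)$ and $n_{j,k}=\varphi(\tau_k,\tau_j)-\varphi(\tau_j,\tau_k)$. Substituting $\varphi=-\phi\circ\omega$ and using \Cref{Lomegaac} --- this is where the normalization $s(\tau_0^{t_0}\cdots\tau_r^{t_r})=s(\tau_0)^{t_0}\cdots s(\tau_r)^{t_r}$ is used --- we have $\omega(\tau_j^{i},\tau_j)=0$ for $0\le i\le p-2$ and $\omega(\tau_j^{p-1},\tau_j)=a_j$, so $n_j=\sum_{i=0}^{p-1}\phi(\omega(\tau_j^{i},\tau_j))=\phi(a_j)$; and since $\phi$ is a homomorphism into the abelian group $M$ and $c_{j,k}=\omega(\tau_k,\tau_j)\,\omega(\tau_j,\tau_k)^{-1}$, we get $n_{j,k}=-\phi(\omega(\tau_k,\tau_j))+\phi(\omega(\tau_j,\tau_k))=-\phi(c_{j,k})$. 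Thus $\tilde\varphi\circ f_2$ is the tuple $(\phi(a_j),-\phi(c_{j,k}))$, the negative of $(-\phi(a_j),\phi(c_{j,k}))$; since the image of the $\Z$-linear map $d^{M}$ is a subgroup, one of these tuples lies in it exactly when the other does, so the coboundary condition becomes $(-\phi(a_j),\phi(c_{j,k}))\in\operatorname{im}d^{M}$.

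Finally I would identify $d^{M}$ concretely: by \Cref{Lcomparecycle1,dA1A0} an element of $\Hom_{\Z[Q]}(A_1,M)\cong M^{r+1}$ is a tuple $(m_0,\dots,m_r)$, and dualizing the description of $d^{A}\colon A_2\to A_1$ from \Cref{dA2A1} gives $d^{M}(m_0,\dots,m_r)=\bigl(N_{\tau_j}m_j,\ -(1-\tau_j)m_k+(1-\tau_k)m_j\bigr)$. Combining this with the previous paragraph gives exactly the asserted criterion. I do not expect a conceptual obstacle here; the real content is in the earlier subsections (the explicit resolutions and comparison maps). The one delicate point, and the step most likely to cause trouble, is the bookkeeping of signs: tracking the global sign in $d_2(\phi)=-\phi\circ\omega$ against the Koszul signs built into $f_1,f_2$ (cf.\ \Cref{rem:signs}) and the signs in \Cref{Lcomparecycle2}, and checking that the homomorphism property of $\phi$ converts $\phi(\omega(\tau_k,\tau_j))-\phi(\omega(\tau_j,\tau_k))$ into $\phi(c_{j,k})$ in agreement with \Cref{Lomegaac}.
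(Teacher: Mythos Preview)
Your proposal is correct and follows essentially the same route as the paper: both invoke \Cref{prop:kerd2v1}, push the bar cocycle through $f_2$ via \Cref{Lcomparecycle2}, and evaluate on the generators using \Cref{Lomegaac}. The only cosmetic difference is that the paper drops the global sign at the outset (working with $\phi\circ\omega$ rather than $-\phi\circ\omega$, since one represents zero iff the other does) and thereby lands directly on $(-\phi(a_j),\phi(c_{j,k}))$, whereas you carry the sign through and negate at the end.
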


\begin{proof}
Consider a class in $H^1(N,M)^Q$ represented by a map $\phi:N\to M$.
By \cref{prop:kerd2v1}, $\phi \in {\rm Ker}(d_2)$ if and only if $\phi \circ \omega :Q\times Q \to M$ represents the zero class in $H^2(Q,M)$. (Note that this is the same as requiring that $-\phi \circ \omega$ represents zero.) 
This representative is given in the bar resolution, and we now translate the condition on $\phi \circ \omega$ to the $A_\bullet$-resolution as above. 

To find a representative for $\phi\circ \omega$ in the $A_\bullet $-resolution, we 
first extend $\omega$ to a $Q$-equivariant map $\tilde \omega: \Z[Q^{3}] \to N$ 
and then take the composition $\tilde{\omega} \circ f_2$.
By \cref{dA2B2,Lcomparecycle2}, $\phi\circ \omega$ is represented by the tuple 
$(n_j^\phi, n_{j,k}^\phi) \in M^{\rho}$, where
\[n_j^\phi = \phi(\tilde \omega ( f_2(1_j))) = \phi(\tilde \omega( -N_{\tau_j} \otimes \tau_j ) ) 
= -\sum_{i=0}^{p-1} \phi(\omega(\tau_j^i, \tau_j)).\]
By \cref{Lomegaac},
\begin{align*}
n_j^\phi &= -\phi(\omega(\tau_j^{p-1}, \tau_j)) = -\phi(a_j), \text{ and}\\
n_{j,k}^\phi &= \phi(\tilde \omega ( f_2(1_{j,k}))) 
= \phi(\tilde \omega ( [\tau_k \otimes \tau_j ] - [\tau_j\otimes \tau_k])  )= \phi(c_{j,k}).
\end{align*}
Applying \cref{Lcomparecycle1} now completes the proof.
\end{proof}

We return now to the situation of the Fermat curve.  

\begin{corollary} \label{Ckerneld2}
Suppose that $E/K$ is a finite Galois extension dominating $L/K$.
In the extension \eqref{shortexact}, let $Q={\rm Gal}(L/K)$ and $G = {\rm Gal}(E/K)$ and $N={\rm Gal}(E/L)$.
Recall that $N$ acts trivially on the relative homology $M=H_1(U, Y; A)$.

Assume $p\geq 5$. 
Then $\phi: N \to M$ represents an element in the kernel of $d_2$ if and only if for all $0\leq j \leq r$,
\[ \phi(a_j) = 0, \]
and there is an (r+1)-tuple $(m_0, \dots m_r) \in M^{r+1}$, such that
\[ \phi(c_{j,k}) = -(1-\tau_j)m_k +(1-\tau_k)m_j. \]
\end{corollary}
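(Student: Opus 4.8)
The plan is to deduce \Cref{Ckerneld2} from \Cref{thm:d2Explicit} by specializing to the Fermat curve situation and then applying the norm vanishing result \Cref{Tnorm}. First I would note that the hypotheses are exactly those under which \Cref{thm:d2Explicit} applies: the extension $1 \to N \to G \to Q \to 1$ with $Q = \rGal{L}{K}$ elementary abelian of rank $r+1$ (by the isomorphism $C$ of \eqref{eq:GalLK}), $N = \rGal{E}{L}$ acting trivially on $M = H_1(U,Y;\Aa)$ (as recalled, since the $\Gal{K}$-action factors through $Q$), and $M$ a $\ZZ[G]$-module. So $\phi$ is in $\Ker d_2$ if and only if there exist $m_0, \ldots, m_r \in M$ with $\phi(a_j) = -N_{\tau_j} m_j$ for all $j$ and $\phi(c_{j,k}) = (1-\tau_k)m_j - (1-\tau_j)m_k$ for $j < k$.

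The only gap between this and the statement of \Cref{Ckerneld2} is the replacement of the condition $\phi(a_j) = -N_{\tau_j} m_j$ by the simpler pair of conditions $\phi(a_j) = 0$ together with an unconstrained $(r+1)$-tuple $(m_0,\ldots,m_r)$ satisfying only the commutator equations. The key observation is that $N_{\tau_j}$ acts on $M = \Lambda_1$ as multiplication by $B_{\tau_j}^{p-1} + \cdots + B_{\tau_j} + 1$ times the appropriate permutation — more precisely, since $\tau_j$ acts on $f\beta$ by $f \mapsto B_{\tau_j} f$ (using that $\tau_j$ fixes $K$, so there is no twist), the operator $N_{\tau_j} = 1 + \tau_j + \cdots + \tau_j^{p-1}$ acts on $M$ as multiplication by the norm $N_{\tau_j}$ of $B_{\tau_j}$ in $\Lambda_1$. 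By \Cref{Tnorm}, for $p \geq 5$ this norm is $\tilde\gamma^{p-1} = 0$. Hence $N_{\tau_j} m_j = 0$ for every $m_j \in M$, so the condition $\phi(a_j) = -N_{\tau_j} m_j$ becomes simply $\phi(a_j) = 0$, independently of the choice of $m_j$; and then the tuple $(m_0, \ldots, m_r)$ is free to be chosen subject only to the commutator equations $\phi(c_{j,k}) = (1-\tau_k)m_j - (1-\tau_j)m_k = -(1-\tau_j)m_k + (1-\tau_k)m_j$.

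I would write the argument in roughly this order: (1) check that the hypotheses of \Cref{thm:d2Explicit} are met in the Fermat setting, citing the identification $M \cong \Lambda_1$ and the triviality of the $N$-action; (2) identify the operator $N_{\tau_j}$ on $M$ with multiplication by the group-ring norm of $B_{\tau_j}$; (3) invoke \Cref{Tnorm} to conclude this norm is zero for $p \geq 5$; (4) substitute into the two conditions of \Cref{thm:d2Explicit} to obtain the stated simplification. The main obstacle — really the only subtle point — is step (2): one must be careful that the action of $\tau_j \in Q$ on $M$ really is multiplication by $B_{\tau_j}$ with no Galois twist, which holds precisely because $\tau_j$ fixes $K = \QQ(\zeta)$ (as noted in \Cref{Ssurvey}, "if $g$ fixes $K$, it is easier to describe the action"), so that $\tau_j \cdot (f\beta) = f \cdot B_{\tau_j}\beta$ and hence $\tau_j^i \cdot (f\beta) = f B_{\tau_j}^i \beta$, giving $N_{\tau_j} \cdot (f\beta) = f(\sum_{i=0}^{p-1} B_{\tau_j}^i)\beta = f \cdot N_{\tau_j}(B_{\tau_j}) \beta$. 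Everything else is a direct substitution.
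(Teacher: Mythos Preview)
Your proposal is correct and follows essentially the same approach as the paper's proof, which simply reads ``This follows from \cref{thm:d2Explicit}, since $N_{\tau_i}$ acts as zero on $M$ by \Cref{Tnorm}.'' You have supplied more detail than the paper does---in particular your step (2), identifying the abstract norm operator $N_{\tau_j}$ on $M$ with multiplication by the group-ring norm of $B_{\tau_j}$ via the untwisted action for $\tau_j \in \rGal{L}{K}$, is implicit in the paper but worth making explicit as you have.
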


\begin{proof}
This follows from \cref{thm:d2Explicit}, since $N_{\tau_i}$ acts as zero on $M$ by \Cref{Tnorm}.
\end{proof}

\begin{remark}
We have a second, more direct proof of \Cref{thm:d2Explicit} as well.  The converse direction is long, computational, and rather unenlightening, hence we decided not to include it. Yet we sketch the forward direction 
here.
Note that $-\phi \in {\rm Ker}(d_2)$ 
if and only if the map $\phi \circ \omega: Q \times Q \to M$ represents the zero cohomology class in $H^2(Q,M)$; equivalently, 
$\phi \circ \omega$ is of the form 
\begin{equation} \label{coboundary}
dm:(q_1,q_2) \mapsto q_1 m(q_2) - m(q_1q_2) + m(q_1),
\end{equation} for some map $m: Q \to M$.  
Let $m_i=m(\tau_i)$.

If $dm = \phi \circ \omega$, then the values $m_j=m(\tau_j) \in M$ determine $m(q)$ for all $q \in Q$ 
because of the $Q$-action.
Specifically, by induction, one can show $m(\tau_j^{t+1})=(\sum_{\ell=0}^{t} \tau_j^\ell) \cdot m_j$ for $1 \leq t \leq p-2$.
Then $\phi \circ \omega(\tau_j, \tau_j^{p-1}) = \phi(a_j)$.
If $\phi \circ \omega = dm$, then $\phi(a_j)=\tau_j \cdot m(\tau_j^{p-1}) + m(\tau_j)$.
Thus $-\phi(a_j)=-N_{\tau_j} \cdot m_j$. 

Next, if $j< k$, then $m(\tau_j \tau_k) = \tau_j \cdot m_k+ m_j$, because $dm(\tau_j,\tau_k)=\omega(\tau_j, \tau_k)= 0$.
Recall that $\phi \circ \omega(\tau_k, \tau_j)=\phi(c_{j,k})$.
If $\phi \circ \omega = dm$, then
$\phi(c_{j,k})=\tau_k \cdot m_j - m(\tau_j \tau_k) + m_k$,
which simplifies to $-\phi(c_{j,k})=(1-\tau_k) \cdot m_j- (1-\tau_j) \cdot m_k$ by substitution.
\end{remark}

\section{Compatibility with points over finite fields} \label{finitefield}

In this final section, we study the action of Frobenius on schemes defined over a finite field of cardinality $\ell$.
In \cref{S7.1}, we use motivic homotopy theory to provide congruence conditions on the characteristic polynomials 
of Frobenius on mod $p$ cohomology.
In \cref{subsection:point_counts_finite_fields_Fermat_example}, we use this and information about 
$B_q$ to compute the $L$-polynomial of the degree $p$ Fermat curve modulo $p$.
The results in this section are not new, but they highlight 
important concepts emerging in the interaction between topology and number theory.

\subsection{Number of points modulo $p$} \label{S7.1}

Let $X$ be a smooth, proper scheme over $\F_{\ell}$.  Let $F$ denote the Frobenius morphism. 
Let $p$ be a prime number not dividing $\ell$. 

Let $N_m$ denote the number of points of $X$ defined over $\F_{\ell^m}$ for $m \in \NN$, 
and let $\overline{N}_m$ denote the reduction of $N_m$ mod $p$. 
By the Lefschetz trace formula, the values $N_m$ are determined by the action of $F$ on $H^*(X_{\overline{\F}_{\ell}}, \Q_p)$
and the values $\overline{N}_m$ are determined by the action of $F$ on $H^*(X_{\overline{\F}_{\ell}}, \F_p)$. 
This section contains a new proof of this fact for $\overline{N}_m$ using realization functors
which is made possible by the work of Hoyois \cite{Hoyois}.

Define $P_i(t)$ in $\mathbb{Q}_p[t]$ and $\overline{P}_i(t)$ in $\F_p[t]$ by 
\[P_i(t) = \det (1-Ft| H^i(X_{\overline{\F}_{\ell}}, \Q_p)), \ 
\overline{P}_i(t) =  \det (1-Ft| H^i(X_{\overline{\F}_{\ell}}, \F_p)).\] 
Define $Z(t)$ in $\Q_p[[t]]$ and $\overline{Z}(t)$ in $\FF_p[[t]]$ by 
\[Z(t) = \prod_{i=0}^{\infty} P_i(t)^{(-1)^{i+1}}, \ 
\overline{Z}(t) = \prod_{i=0}^{\infty} \overline{P}_i(t)^{(-1)^{i+1}}.\]
If $Q \in \F_p[[t]]$ is invertible (e.g., if $Q(0)=1$), let $\frac{d}{dt} \log Q = \frac{d}{dt}Q/Q$.

In this section, we prove the following result using motivic homotopy theory.

\begin{proposition}\label{modpWeil}
The mod $p$ number of points $\overline{N}_m$ of $X$ over $\F_{\ell^m}$ is determined by $\Sigma_{m=1}^{\infty} \overline{N}_m t^{m-1} = \frac{d}{dt} \log \overline{Z}(t).$
\end{proposition}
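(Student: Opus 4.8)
The plan is to reduce Proposition \ref{modpWeil} to its characteristic-zero counterpart by means of a mod $p$ realization functor, so that the identity $\sum_{m\geq 1} \overline{N}_m t^{m-1} = \tfrac{d}{dt}\log \overline{Z}(t)$ becomes the image under that functor of the classical statement $\sum_{m\geq 1} N_m t^{m-1} = \tfrac{d}{dt}\log Z(t)$. The first step is to recall the classical Lefschetz trace formula: for each $m$, $N_m = \sum_i (-1)^i \operatorname{Tr}(F^m \mid H^i(X_{\overline{\F}_\ell},\Q_p))$, and the standard manipulation of the zeta function gives $\tfrac{d}{dt}\log Z(t) = \sum_i (-1)^i \sum_m \operatorname{Tr}(F^m\mid H^i) t^{m-1} = \sum_m N_m t^{m-1}$. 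The goal is to run the exact same bookkeeping with $\F_p$-coefficients, so what must be established is the mod $p$ trace formula $\overline{N}_m \equiv \sum_i (-1)^i \operatorname{Tr}(F^m \mid H^i(X_{\overline{\F}_\ell},\F_p)) \bmod p$; once this is in hand, the formal identity $\tfrac{d}{dt}\log \overline{Z}(t) = \sum_i (-1)^i \operatorname{Tr}(F^m\mid H^i(X_{\overline{\F}_\ell},\F_p)) t^{m-1}$ follows by exactly the same algebra over $\F_p[[t]]$ (using that each $\overline{P}_i(t)$ is invertible since $\overline{P}_i(0)=1$, and that $\tfrac{d}{dt}\log(\overline{P}_i^{(-1)^{i+1}}) = (-1)^{i+1}\tfrac{d}{dt}\log \overline{P}_i$, together with the formula $\tfrac{d}{dt}\log \det(1-Ft\mid V) = -\sum_{m\geq 1}\operatorname{Tr}(F^m\mid V)t^{m-1}$ which holds over any commutative ring).

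The heart of the argument — and the step I expect to be the main obstacle — is the mod $p$ trace formula itself. The approach here is motivic: one works in the stable motivic homotopy category $\mathrm{SH}(\F_\ell)$, where the class of $X$ (or rather $\Sigma^\infty_+ X$) is a dualizable object, and the number of $\F_{\ell^m}$-points is the "categorical trace" of the $m$-th power of Frobenius acting on this dualizable object, up to a correction. Precisely, Hoyois's work \cite{Hoyois} provides a motivic Lefschetz–Verdier trace formula identifying $N_m$ (in a suitable target ring) with the trace of $F^m$ on $\Sigma^\infty_+ X$. Applying the mod $p$ étale realization functor $\mathrm{SH}(\F_\ell) \to D(\F_p)$ (which is symmetric monoidal, hence preserves dualizable objects and categorical traces) sends this categorical trace to $\sum_i (-1)^i \operatorname{Tr}(F^m \mid H^i(X_{\overline{\F}_\ell},\F_p))$, while it sends $N_m$ — viewed as an element of $\pi_0$ of the sphere, i.e. essentially $N_m$ itself — to $\overline{N}_m = N_m \bmod p$. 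Matching the two images gives the congruence. The technical care needed is: (i) checking that the realization functor is indeed symmetric monoidal and that Frobenius is carried to the geometric Frobenius on étale cohomology; (ii) that the "unit" side of the trace formula, which in $\mathrm{SH}$ lands in the Grothendieck–Witt or motivic stable stems, maps to the integer $N_m$ under realization so that its mod $p$ reduction is literally $\overline{N}_m$; and (iii) smoothness and properness of $X$ to guarantee dualizability and the validity of Hoyois's formula.

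Finally I would assemble the pieces: combine the mod $p$ trace formula with the formal power-series identity to conclude $\sum_{m\geq 1}\overline{N}_m t^{m-1} = \tfrac{d}{dt}\log\overline{Z}(t)$ in $\F_p[[t]]$, which is the assertion of Proposition \ref{modpWeil}. It is worth emphasizing in the write-up that $\overline{Z}(t)$ is \emph{not} in general the reduction mod $p$ of $Z(t)$ — the individual $P_i(t)$ need not have $p$-integral coefficients, and the $\overline{P}_i(t)$ are defined independently via $\F_p$-cohomology — so the content of the proposition is genuinely that the mod $p$ point counts are recovered from mod $p$ cohomology, and the only honest input beyond formal manipulation is the realization-functor argument for the trace formula.
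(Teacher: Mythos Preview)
Your proposal is correct and follows essentially the same approach as the paper: both establish the mod $p$ Lefschetz trace formula $\overline{N}_m = \sum_i (-1)^i \Tr(F^m \mid H^i(X_{\overline{\F}_\ell},\F_p))$ by invoking Hoyois's motivic trace formula in $\mathrm{SH}(\F_\ell)$ and then pushing the resulting identity in $\GW(\F_\ell)$ through the symmetric monoidal \'etale/$\F_p$-realization functor, after which the power-series identity is a formal computation. The only cosmetic difference is that the paper carries out the formal step by factoring each $\overline{P}_i$ into linear factors over $\overline{\F}_p$, whereas you invoke the universal identity $\tfrac{d}{dt}\log\det(1-Ft\mid V) = -\sum_m \Tr(F^m\mid V)t^{m-1}$ directly; these are equivalent.
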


\Cref{modpWeil} follows from \cite[Section 3 Fonctions $L$ Modulo $\ell^n$ et Modulo $p$, Theorem 2.2 (b)]{sga4andhalf}. Here is a proof using motivic homotopy theory.

\begin{proof}
Let $\Tr$ denote the trace of an endomorphism of a strongly dualizable object in a symmetric monoidal category. The Frobenius $F$ is an endomorphism of $X$ viewed as an object the stable $\BA^1$-homotopy category of $\PP$-Spectra over $\F_{\ell}$. As $X$ is strongly dualizable, we have that $\Tr(F^m)$ lives in the Grothendieck-Witt ring $\GW(\F_{\ell})$. By Hoyois's generalized Lefschetz trace formula \cite[Example 1.6, Theorem 1.3]{Hoyois}, $\Tr(F^m) = N_m$. Applying the symmetric monoidal functor $H^*((-)_{\overline{\F}_{\ell}},\F_p)$, the trace $\Tr(F^m)$ becomes the trace in the symmetric monoidal category of graded $\F_p$ vector spaces, which is $\Sigma_i (-1)^i \Tr F^m|H^i(X_{\overline{\F}_{\ell}}, \F_p)$. Applying the same functor to the endomorphism $N_m$ of the sphere yields $\overline{N}_m$ regarded as an endomorphism of $\F_p$ viewed as a graded vector space concentrated in degree $0$. It follows that 
\begin{equation}\label{modpLT} \overline{N}_m = \Sigma_i (-1)^i \Tr F^m|H^i(X_{\overline{\F}_{\ell}}, \F_p).\end{equation}

The claimed equality then follows from a formal algebraic manipulation. One could apply \cite[Rapport sur la formula des traces 3.3.1]{sga4andhalf}, or to be explicit, proceed as follows.

Since $\overline{P}_i(0) = 1$, it follows that $\overline{P}_i(t) = \prod (1 - a_{i,j} t)$ for some $a_{i,j}$ in $\overline{\F}_p$. Since the matrix corresponding to the action of $F$ on $H^i(X_{\overline{\F}_{\ell}}, \F_p)$ can be put in upper triangular form over $\overline{\F}_p$, it follows that the diagonal entries are the $a_{i,j}$. Thus $\Tr F^m = \Sigma a_{i,j}^m$ for all $m$.

Furthermore, $\overline{P}_i$ is invertible in $\F_p[[t]]$ since $\overline{P}_i(0) =1$.  
Thus
\[\frac{d}{dt} \log \overline{P}(t) = \frac{\frac{d}{dt}\overline{P}(t)}{\overline{P}(t)}= 
-\sum_j \frac{a_{i,j}}{1 - a_{i,j} t} = - \sum_j \sum_m a_{i,j}^m t^{m-1}.\]
Also, 
\[\frac{d}{dt} \log \overline{Z}(t) = \frac{\frac{d}{dt}\overline{Z}(t)}{\overline{Z}(t)}.\]
Since $d/dt \log$ is a homomorphism,
\begin{align*}
\frac{d}{dt} \log \overline{Z}(t) & = - \sum_i (-1)^{i+1}  \sum_j \sum_m a_{i,j}^m t^{m-1} 
= \sum_i (-1)^i \sum_m \Big( \sum_j a_{i,j}^m\Big) t^{m-1}  \\
& = \sum_i  \sum_m (-1)^i \Big( \Tr F^m | H^i(X_{\overline{F}_{\ell}}, \F_p) \Big) t^{m-1} \\
&= \sum_m \overline{N}_m t^{m-1},
\end{align*}
where the last equality follows from \eqref{modpLT}. 
\end{proof}

\subsection{Application to the Fermat curve}\label{subsection:point_counts_finite_fields_Fermat_example}

Let $X$ be the Fermat curve of exponent $p$ over a prime $\ell$ of $\ZZ[\zeta_p]$. Let $\FF$ be the residue field of $\ell$, and $\FF_{\ell^{m}}$ denote the unique degree $m$ extension. 
Knowledge of $B_{\sigma}$ for $\sigma \in Q= \rGal{L}{K}$ and \Cref{modpWeil}
determine the zeta function of $X$ modulo $p$ as follows. 

\begin{proposition} \label{Lmodpzeta}
Let $X$ and $\FF$ be as above, and let $\Jac X$ denote the Jacobian of $X$.
\begin{enumerate}
\item $Z(X/\FF, T) \equiv (1-T)^{2g-2} \bmod p$.
If $N_m:=\#X(\FF_{\ell^{m}})$, then $N_m \equiv 0 \bmod p$ for all $m \geq 1$.
\item $Z(\Jac X/\FF, T) \equiv 1 \bmod p$.
If $N_m:=\#\Jac X(\FF_{\ell^{m}})$, then $N_m \equiv 0 \bmod p$ for all $m \geq 1$.
\end{enumerate}
\end{proposition}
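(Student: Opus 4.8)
The plan is to deduce both statements from a single input: the (geometric) Frobenius $F$ at $\ell$ acts \emph{unipotently} on $H^1_{\mathrm{et}}(X_{\overline{\FF}}, \F_p)$; granting this, parts (1) and (2) are short formal computations with the polynomials $\overline{P}_i$ and \Cref{modpWeil}. To establish the unipotence, I would first invoke smooth proper base change (valid since $\ell\nmid p$, so the Fermat curve has good reduction at $\ell$): $H^1_{\mathrm{et}}(X_{\overline{\FF}}, \F_p)\cong H^1_{\mathrm{et}}(X_{\overline{K}}, \F_p)$ compatibly with $F$ and all its powers, where on the right the Frobenius at $\ell$ in $\Gal{K}$ acts. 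Dually this is the $\F_p$-dual of $H_1^{\mathrm{et}}(X_{\overline{K}}, \F_p)$, which by the long exact sequences of $(U,Y)$ and $(X,U)$ — whose connecting/excision terms vanish because $Y$ and $X\setminus U$ are finite — is a $\Lambda_1$-equivariant subquotient of $H_1(U,Y;\F_p)=\Lambda_1\beta$; concretely it is a quotient of $H_1(U;\F_p)=\langle y_0y_1\rangle\beta$, cf.\ \cite[Lemma 6.1, Proposition 6.2]{WINF:birs}. Since $L/K$ is unramified at $\ell$, and since the $\Gal{K}$-action on $\Lambda_1\beta$ factors through $Q=\rGal{L}{K}$ with $g$ acting by multiplication by $B_g$ (with no cyclotomic twist, as $\zeta_p\in K$), the Frobenius at $\ell$ acts on all of these modules through multiplication by $B_{q_\ell}\in\Lambda_1$, where $q_\ell\in Q$ is its image. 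Because $B_{q_\ell}-1$ lies in the nilpotent ideal $\langle y_0,y_1\rangle$ of $\Lambda_1$ by \cite[9.6, 10.5.2]{Anderson} (even in $\langle y_0,y_1\rangle^3$ for $p\ge5$ by \Cref{cor:BqinI3}), multiplication by $B_{q_\ell}$ is a unipotent endomorphism of $\Lambda_1$ and of every sub- and quotient module, so $F$ is unipotent on $H^1_{\mathrm{et}}(X_{\overline{\FF}}, \F_p)$.

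Given this, part (1) follows at once: for the curve $X$ one has $\overline{Z}(X/\FF,T)=\overline{P}_1(T)/(\overline{P}_0(T)\overline{P}_2(T))$ with $\overline{P}_0(T)=1-T$, with $\overline{P}_2(T)=1-|\FF|\,T\equiv1-T\bmod p$ (since $\zeta_p\in\FF$ forces $|\FF|\equiv1\bmod p$), and with $\overline{P}_1(T)=(1-T)^{2g}$ by the unipotence of $F$ on the $2g$-dimensional space $H^1_{\mathrm{et}}(X_{\overline{\FF}}, \F_p)$; hence $\overline{Z}(X/\FF,T)=(1-T)^{2g-2}$. As $2g-2=(p-1)(p-2)-2=p(p-3)\equiv0\bmod p$, this lies in $\F_p[[T^p]]$, so $\tfrac{d}{dT}\log\overline{Z}(X/\FF,T)=0$, and \Cref{modpWeil} yields $\sum_{m\ge1}\overline{N}_mT^{m-1}=0$, i.e.\ $N_m\equiv0\bmod p$ for all $m\ge1$ (equivalently, by the mod $p$ Lefschetz trace formula established in the proof of \Cref{modpWeil}, $\overline{N}_m=1-2g+|\FF|^m\equiv1-2+1=0$). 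For part (2), the cohomology of the abelian variety $\Jac X$ is the exterior algebra $H^i(\Jac X_{\overline{\FF}},\F_p)=\bigwedge^i H^1(X_{\overline{\FF}},\F_p)$ (using $H^1(\Jac X)\cong H^1(X)$), so $F$ is unipotent on each $H^i(\Jac X_{\overline{\FF}},\F_p)$, giving $\overline{P}_i(\Jac X,T)=(1-T)^{\binom{2g}{i}}$ and
\[
\overline{Z}(\Jac X/\FF,T)=\prod_{i=0}^{2g}(1-T)^{(-1)^{i+1}\binom{2g}{i}}=(1-T)^{-\sum_{i=0}^{2g}(-1)^i\binom{2g}{i}}=(1-T)^{-(1-1)^{2g}}=1;
\]
then \Cref{modpWeil} gives $\#\Jac X(\FF_{\ell^m})\equiv0\bmod p$ for all $m\ge1$ (directly: $\#\Jac X(\FF_{\ell^m})=\det(1-F^m\mid H^1)=\prod_j(1-\alpha_j^m)\equiv\prod_j(1-1)=0$, every Frobenius eigenvalue $\alpha_j$ reducing to $1$).

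The step I expect to require the most care is the identification in the first paragraph: that the Frobenius action on $H^1_{\mathrm{et}}(X_{\overline{\FF}},\F_p)$ is genuinely implemented by multiplication by some $B_{q_\ell}$ on a $\Lambda_1$-subquotient of Anderson's module. This rests on the good-reduction base-change comparison, on the identification of $H_1(X)$ as a subquotient of $H_1(U,Y)$ (together with a choice of duality between $H^1$ and $H_1$ that introduces no Tate twist), on $L/K$ being unramified at $\ell$, and — crucially — on the triviality modulo $p$ of the cyclotomic character restricted to $\Gal{K}$, which holds precisely because $\zeta_p\in\FF$. Once that identification is in hand, the remaining manipulations are the routine exterior-algebra and power-series computations above.
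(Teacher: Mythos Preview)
Your proposal is correct and follows essentially the same route as the paper: establish that Frobenius acts unipotently on $H^1(X_{\overline{\FF}},\F_p)$ because it acts via multiplication by $B_{q_\ell}$ on a subquotient of $\Lambda_1$ (with $B_{q_\ell}-1$ nilpotent), then read off $\overline{P}_1=(1-T)^{2g}$, use $|\FF|\equiv1\bmod p$ and $2g-2\equiv0\bmod p$ for part~(1), and pass to the exterior algebra for part~(2). You supply more explicit justification for the Frobenius identification (smooth proper base change, $L/K$ unramified at $\ell$, triviality of the mod $p$ cyclotomic character on $\Gal{K}$) than the paper does, but the argument is the same.
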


\begin{proof}
Note that $Z(Y/\FF, T) \equiv \overline{Z}(Y/\FF,T) \bmod p$ for $Y=X$ or $\Jac X$. 
\begin{enumerate}
\item The action of the Frobenius $F$ on $M=H_1(U,Y;\F_p)$ is given by multiplication by $B_{\sigma}$, where $\sigma \in Q$ is the Frobenius for $\ell$. 
Now $H_1(X,\F_p)$ is a sub-quotient of $M$, and $M$ has a basis (namely the nilpotent basis given by monomials in $y_i=\e_i-1$) in which the action of $B_{\sigma}$ is lower-triangular with diagonal entries equal to $1$. Since $H^1(X,\F_p)$ is the linear dual of $H_1(X,\F_p)$, so it follows that the action of $F$ on $H^1(X,\F_p)$ satisfies $\det(1-F T | H^1(X,\F_p))  = (1-T)^{2g}$, proving the first claim.
For the second claim, note that 
\[Z(X/\FF_q, T)\equiv \frac{(1-T)^{2g}}{(1-T)(1-\vert \FF \vert T)} \equiv (1-T)^{2g-2} \bmod p,\] where the last equivalence follows because $\FF$ has a $p$th root of unity, implying $\vert \FF \vert -1 \equiv 0 \bmod p$.
By \Cref{modpWeil}, 
\[\Sigma_{m=1}^{\infty} \overline{N}_m T^{m-1} = d/dT \log \overline{Z}(T) = -(2g-2)(1-T)^{2g-3}/ \overline{Z}(T).\]
But $g = (p-1)(p-2)/2$, so $2g-2 = p^2 - 3p \equiv 0 \bmod p$.  
\item We have seen that the action of $F$ on $H^1(X,\F_p)$ is such that $1-F$ is nilpotent. Thus the same is true for the action of $F$ on the $i$th wedge power $\wedge^i H^1(X,\F_p)$. Since $H^i(\Jac X, \F_p) \cong \wedge^i H^1(X,\F_p)$, it follows that $\det(1-F T | H^i(\Jac X,\F_p))  = (1-T)^{d_i}$, where $d_i = {2g \choose i}$ is the dimension of  $\wedge^i H^1(X,\F_p)$. Thus \[Z({\rm Jac} X/\FF_q, T)\equiv (1-T)^{\sum_i (-1)^{i+1}d_i} \equiv 1 \bmod p. \] 
\end{enumerate}
\end{proof}

\begin{remark}
The facts in \cref{Lmodpzeta} can also be proven directly.
The fact that $N_m \equiv 0 \bmod p$ is a direct consequence of the fact that the $C_p \times C_p$ action on $X$ has 3 orbits of size $p$ and 
all other orbits of size $p^2$.

For the fact about the $L$-polynomial, let $\chi$ be a character of $\FF$ of order $p$.  
Let $J_{i,j}=J(\chi^i, \chi^j)=\sum_{a+b=1} \chi^i(a) \chi^j(b)$.
By \cite[page 98]{irelandrosen}, $\#X(\FF)=L^f+1 + \sum_S J_{i,j}$ where
$S=\{(i,j) \mid 1 \leq i,j \leq p-1, \ i + j \not \equiv 0 \bmod p\}$.  
Note that there are $2g=(p-1)(p-2)$ such pairs. 
In fact, by \cite[page 61]{katz}, the eigenvalue of Frobenius on the eigenspace of $H^1(X)$ corresponding to 
$(\chi^i, \chi^j)$ is $-J_{i,j}$.
Lemma \ref{Lmodpzeta} can also be proven using congruence properties of Jacobi sums and the fact that \[L(X/\FF, T) = \prod_S (1-J_{i,j}T).\]
\end{remark}

\bibliographystyle{amsalpha}
\bibliography{biblio}

\end{document}